\newcommand{\EO}[1]{\noindent{\textcolor{black}{#1}}}
\newtheorem{remark}{Remark}[section]
\newcommand{\T}{\mathscr{T}}
\newcommand{\V}{\mathbb{V}}
\title{
Error estimates for fractional semilinear optimal control on Lipschitz polytopes\thanks{Submitted to the editors \today. The author is partially supported by ANID through FONDECYT project 1220156.}}
\author{Enrique Ot\'arola\thanks{Departamento de Matem\'atica, Universidad T\'ecnica Federico Santa Mar\'ia, Valpara\'iso, Chile ({\tt enrique.otarola@usm.cl}).}
}
\begin{document}
\maketitle

%%%%%%%%%%%%%%%%%%%%%%%%%%%%%%%%%%%%%%%%%%%%%%%%%%%%%%%%%%%%%%%%%%%%%%%%
%%%%%%%%%%%%%%%%%%%%%%%%%%%%%%%%%%%%%%%%%%%%%%%%%%%%%%%%%%%%%%%%%%%%%%%%
\begin{abstract}
\EO{We adopt the \emph{integral} definition of the fractional Laplace operator and analyze solution techniques for fractional, semilinear, and elliptic optimal control problems posed on Lipschitz polytopes. We consider two strategies of discretization: a semidiscrete scheme where the  admissible control set is not discretized and a fully discrete scheme where such a set is discretized with piecewise constant functions. As an instrumental step, we derive error estimates for finite element discretizations of fractional semilinear elliptic partial differential equations (PDEs) on quasi-uniform and graded meshes. With these estimates at hand, we derive error bounds for the semidiscrete scheme and improve the ones that are available in the literature for the fully discrete scheme.}
\end{abstract}
%%%%%%%%%%%%%%%%%%%%%%%%%%%%%%%%%%%%%%%%%%%%%%%%%%%%%%%%%%%%%%%%%%%%%%%%
%%%%%%%%%%%%%%%%%%%%%%%%%%%%%%%%%%%%%%%%%%%%%%%%%%%%%%%%%%%%%%%%%%%%%%%%

\begin{keywords}
optimal control, fractional diffusion, integral fractional Laplacian, semilinear equations, regularity estimates, \EO{finite elements, graded meshes,} a priori error estimates.
\end{keywords}

%%%%%%%%%%%%%%%%%%%%%%%%%%%%%%%%%%%%%%%%%%%%%%%%%%%%%%%%%%%%%%%%%%%%%%%%%
%%%%%%%%%%%%%%%%%%%%%%%%%%%%%%%%%%%%%%%%%%%%%%%%%%%%%%%%%%%%%%%%%%%%%%%%%
\begin{AMS}
35R11,     %%   Fractional partial differential equations
49J20,     %%   Optimal control problems involving partial differential equations
49M25,     %%   Discrete approximations
65K10,     %%   Optimization and variational techniques
65N15,     %%   Error bounds
65N30.     %%   Finite elements, Rayleigh-Ritz and Galerkin methods, finite methods;
\end{AMS}

%%%%%%%%%%%%%%%%%%%%%%%%%%%%%%%%%%%%%%%%%%%%%%%%%%%%%%%%%%%%%%%%%%%%%%%%%%
%%%%%%%%%%%%%%%%%%%%%%%%%%%%%%%%%%%%%%%%%%%%%%%%%%%%%%%%%%%%%%%%%%%%%%%%%%
\section{Introduction}
In this work we are interested in the analysis of finite element discretization techniques for a distributed optimal control problem involving a fractional, semilinear, and elliptic \EO{PDE}. To make the discussion precise, we let $\Omega \subset \mathbb{R}^n$, with $n \geq 2$, be an open, bounded, and Lipschitz polytope. Given $\alpha > 0$, the so-called regularization parameter, and $L: \Omega \times \mathbb{R} \rightarrow \mathbb{R}$, a Carath\'eodory function of class $C^2$ with respect to the second variable, we introduce the cost functional
\begin{equation}
 \label{eq:cost_functional}
 J(u,z) := \int_{\Omega} L (x,u(x)) \mathrm{d}x + \frac{\alpha}{2} \int_{\Omega} |z(x)|^2  \mathrm{d}x;
\end{equation}
further assumptions on $L$ will be deferred until section \ref{sec:assumption}. \EO{We are then interesting in finding}
% The PDE-constrained optimization problem under consideration reads as follows: Find
%
$
 \min  J(u,z)
$
subject to the \emph{fractional, semilinear}, and \emph{elliptic} PDE
\begin{equation}
\label{eq:state_equation}
(-\Delta)^s u + a(\cdot,u) = z \textrm{ in } \Omega, 
\qquad u = 0 \textrm{ in } \Omega^c,
\qquad
\EO{\Omega^c = \mathbb{R}^n\setminus \Omega,}
\qquad
\EO{s \in (0,1),}
\end{equation}
and the \emph{control constraints}
$
 \mathfrak{a} \leq z(x) \leq \mathfrak{b}
$
for a.e.~$x \in \Omega$; \EO{$\mathfrak{a},\mathfrak{b} \in \mathbb{R}$ are such that $\mathfrak{a} < \mathfrak{b}$. In \eqref{eq:state_equation}, $(-\Delta)^s$ corresponds to the \emph{integral} fractional Laplacian and $a$ denotes a nonlinear function; see section \ref{sec:assumption} for assumptions on $a$.
% we adopt the \emph{integral} definition of the fractional Laplace operator. Assumptions on the nonlinear function $a$ will be deferred until section \ref{sec:assumption}. 
% The control bounds $\mathfrak{a},\mathfrak{b} \in \mathbb{R}$ are such that $\mathfrak{a} < \mathfrak{b}$. 
We will refer to the previously introduced optimization problem as the \emph{fractional semilinear optimal control problem}.}

The development and analysis of solution techniques for problems involving suitable definitions of fractional diffusion is a relatively new but rapidly growing area of research. We refer the interested reader to \cite{MR3893441,MR4189291} for a complete overview of the available results and limitations. In contrast to these advances, the study of numerical methods for optimal control problems involving fractional diffusion has not been as developed. Restricting ourselves to problems that consider the \emph{spectral definition}, we mention \cite{MR3429730,MR4015150,MR3702421} within the linear--quadratic scenario, \cite{MR4066856,MR3739306} for sparse PDE-constrained optimization, and \cite{MR3939497} for bilinear optimal control. Concerning problems involving the \emph{integral} definition of fractional diffusion, we mention \cite{MR3990191,glusaotarola} for the linear--quadratic case, \EO{\cite{bilinear_otarola} for bilinear optimal control,} and \cite{MR4358465} for semilinear optimal control. \EO{We conclude this paragraph by mentioning the advances in parameter identification for nonlocal/fractional operators of \cite{MR3472639,MR4369059,MR4441219} and the ones, at the continuous level, in fractional semilinear optimal control \cite{MR4358465,semilinear}, external optimal control for fractional diffusion \cite{MR3988258}, and fractional optimal control with state constraints \cite{MR4119494}.}

This paper extends the recent work \cite{MR4358465} in several directions. In what follows, we briefly detail our main contributions and improvements on the available theory:
\begin{enumerate}[leftmargin=0.5cm]
 \item \emph{Finite element discretizations of fractional semilinear PDEs}:
 \EO{We discretize fractional semilinear PDEs using continuous piecewise linear finite elements and derive in Theorems \ref{thm:error_estimates_state_equation} and \ref{thm:error_estimates_state_equation_2} error bounds on Lipschitz polytopes. We improve upon \cite{MR4358465} and extend the global estimates in \cite{MR4283703} to a semilinear setting. As an instrumental step, we derive regularity estimates. For $s \geq n/(4(n-1))$, the estimates of Theorems \ref{thm:error_estimates_state_equation} and \ref{thm:error_estimates_state_equation_2} are improved in Theorem \ref{thm:error_estimates_state_equation_graded} by considering suitable graded meshes but at the expense of requiring that $\Omega$ satisfies an \emph{exterior ball condition}. The restriction $s \geq n/(4(n-1))$ guarantees that we can utilize \cite[estimate (3.17)]{MR4283703}.}
  \item \emph{Regularity estimates for optimal variables}: We derive regularity estimates for an optimal triplet $(\bar u, \bar p, \bar z)$  in H\"older and Sobolev spaces; see Theorems \ref{thm:regularity_space}, \ref{thm:regularity_space_Sobolev}, and \ref{thm:regularity_space_Sobolev_2}. The results on Sobolev spaces hold under the assumption that $\Omega$ satisfies an \emph{exterior ball condition} and improves upon \cite{MR4358465}, where $\partial \Omega \in C^{\infty}$.
  \item \emph{Finite element discretizations for the optimal control problem}.  \EO{For the fully discrete scheme of \cite{MR4358465}, we derive a nearly-optimal estimate for the error commited within the approximation of an optimal control variable on Lipschitz polytopes. This improves upon \cite{MR4358465}, where $\partial \Omega \in C^{\infty}$. 
%   we assume that $\Omega$ is a Lipschitz polytope. 
  In addition, we propose a semidiscrete scheme based on the variational discretization approach \cite{Hinze:05} and perform an error analysis on conforming and shape regular families of simplicial triangulations. Under the assumption that $\Omega$ satisfies an \emph{exterior ball condition}, in Theorem  \ref{thm:error_estimate_control_va_graded} we improve the aforementioned error analysis by considering suitable graded meshes.}
\end{enumerate}

\section{Notation and preliminaries}
\label{sec:notation_and_preliminaries}
We begin this section by fixing notation and the setting in which we will operate. Throughout this work $n \geq 2$ and $\Omega \subset \mathbb{R}^n$ is an open, bounded, and Lipschitz polytope.
% we will impose additional assumptions on $n$ and $\Omega$ when needed. 
We will denote by $\Omega^c$ the complement of $\Omega$. If $\mathfrak{X}$ and $\mathfrak{Z}$ are Banach function spaces, we write $\mathfrak{X}  \hookrightarrow \mathfrak{Z}$ to denote that $\mathfrak{X}$ is continuously embedded in $\mathfrak{Z}$. We denote by $\mathfrak{X}'$ and $\| \cdot \|_{\mathfrak{X}}$ the dual and the norm of $\mathfrak{X}$, respectively. We denote by $\langle \cdot,\cdot \rangle_{\mathfrak{X}',\mathfrak{X}}$ the duality paring between $\mathfrak{X}'$ and $\mathfrak{X}$ and simply write $\langle \cdot,\cdot \rangle$ when the spaces $\mathfrak{X}'$ and $\mathfrak{X}$ are clear from the context. Let $\{ x_n \}_{n=1}^{\infty}$ be a sequence in $\mathfrak{X}$. We denote by $x_n \rightarrow x$ and $x_n \rightharpoonup x$ the  strong and weak convergence, respectively, of $\{ x_n \}_{n=1}^{\infty}$ to $x$. The relation ${\sf a} \lesssim {\sf b}$ indicates that ${\sf a} \leq C {\sf b}$, with a positive constant $C$ that does not depend on either ${\sf a}$, ${\sf b}$, or the discretization parameters but that might depend on $s$, $n$, and $\Omega$. The value of $C$ might change at each occurrence.

%%%%%%%%%%%%%%%%%%%%%%%%%%%%%%%%%%%%%%%%%%%%%%%%%%%%%%%%%%%%%%%%%%%%%%%%%%%%%
\subsection{Function spaces}
\label{sec:function_spaces}
For any $s \geq 0$, we define $H^s(\mathbb{R}^n)$, the fractional Sobolev space of order $s$ over $\mathbb{R}^n$, by \cite[Definition 15.7]{Tartar} 
\[
 H^s(\mathbb{R}^n) := \left \{ v \in L^2(\mathbb{R}^n): (1 + |\xi|^2)^{\frac{s}{2}} \mathcal{F}(v) \in L^2(\mathbb{R}^n) \right \},
%  ,
%  \| v \|_{H^s(\mathbb{R}^n)}:= \| (1 + |\xi|^2)^{\frac{s}{2}} \mathcal{F}(v)\|_{L^2(\mathbb{R}^n)}.
\]
endowed with the norm $\| v \|_{H^s(\mathbb{R}^n)}:= \| (1 + |\xi|^2)^{\frac{s}{2}} \mathcal{F}(v)\|_{L^2(\mathbb{R}^n)}$. 
\EO{We define $\tilde H^s(\Omega)$ as the closure of $C_0^{\infty}(\Omega)$ in $H^s(\mathbb{R}^n)$ and immediately notice that it can be equivalently characterized as the following space of zero-extension functions \cite[Theorem 3.29]{McLean}:
\begin{equation}
\label{eq:H_tilde_s}
\tilde H^s(\Omega) = \{v|_{\Omega}: v \in H^s(\mathbb{R}^n), \textrm{ supp } v \subset \overline\Omega\}.
% \qquad
% \| v \|_{\tilde H^s(\Omega)}: = \| v \|_{W^s(\mathbb{R}^n)};
\end{equation}
% see \cite[Theorem 3.29]{McLean} for details. 
We endow $\tilde{H}^{s}(\Omega)$ with the following inner product and norm \cite[page 75]{McLean}:}
\begin{equation*}
\label{eq:inner_product}
( v, w )_{\tilde{H}^{s}(\Omega)} :=\int_{\mathbb{R}^{d}}\int_{\mathbb{R}^{d}}\frac{(v(x) - v(y))(w(x) - w(y))}{|x - y|^{n+2s}}\mathrm{d}x\mathrm{d}y,
\quad
\| v \|_{\tilde{H}^s(\Omega)}:= ( v, v )^{\frac{1}{2}}_{\tilde{H}^{s}(\Omega)}.
\end{equation*}
We denote by $H^{-s}(\Omega)$ the dual space of $\tilde H^s(\Omega)$.

We conclude this section with the following Sobolev embedding results.
\begin{proposition}[embedding results]
\EO{Let $s \in (0,1)$.} If $\mathfrak{q} \in [1,2n/(n-2s)]$, then $H^s(\Omega) \hookrightarrow L^{\mathfrak{q}}(\Omega)$. If $\mathfrak{q} \in [1,2n/(n-2s))$, then $H^s(\Omega) \hookrightarrow L^{\mathfrak{q}}(\Omega)$ is compact.
\label{pro:Sobolev_embedding}
\end{proposition}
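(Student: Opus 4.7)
The plan is to reduce both claims to the corresponding results on $\mathbb{R}^n$ via a bounded extension operator. Since $\Omega$ is a bounded Lipschitz domain, standard extension theory (Stein, or the explicit constructions adapted to fractional Sobolev spaces) provides a bounded linear operator $E: H^s(\Omega) \to H^s(\mathbb{R}^n)$ such that $(Ev)|_\Omega = v$ and $\|Ev\|_{H^s(\mathbb{R}^n)} \lesssim \|v\|_{H^s(\Omega)}$.

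\textbf{Continuous embedding.} I would first establish the critical Sobolev embedding on $\mathbb{R}^n$: $H^s(\mathbb{R}^n) \hookrightarrow L^{\mathfrak{q}^*}(\mathbb{R}^n)$ with $\mathfrak{q}^* = 2n/(n-2s)$. Using the Fourier characterization of $H^s(\mathbb{R}^n)$ given in the excerpt, any $v \in H^s(\mathbb{R}^n)$ can be written as a Bessel potential $v = \mathcal{G}_s * f$ with $f \in L^2(\mathbb{R}^n)$ and $\|f\|_{L^2(\mathbb{R}^n)} = \|v\|_{H^s(\mathbb{R}^n)}$. Comparing the Bessel kernel with the Riesz kernel and applying the Hardy--Littlewood--Sobolev inequality yields $\|v\|_{L^{\mathfrak{q}^*}(\mathbb{R}^n)} \lesssim \|v\|_{H^s(\mathbb{R}^n)}$. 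Composing with the extension operator $E$ and restricting to $\Omega$ gives $H^s(\Omega) \hookrightarrow L^{\mathfrak{q}^*}(\Omega)$. For $\mathfrak{q} \in [1,\mathfrak{q}^*)$, since $\Omega$ is bounded, H\"older's inequality furnishes $L^{\mathfrak{q}^*}(\Omega) \hookrightarrow L^{\mathfrak{q}}(\Omega)$, which chains to yield the full range of embeddings.

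\textbf{Compactness.} For the second statement, I would begin by recalling the compact embedding $H^s(\Omega) \hookrightarrow \hookrightarrow L^2(\Omega)$, which holds on bounded Lipschitz domains; this can be obtained either by the Fr\'echet--Kolmogorov criterion applied to $E(\cdot)$ restricted to $\Omega$ (using equicontinuity in the $L^2$-mean via the Gagliardo seminorm) or by interpolating the classical Rellich--Kondrachov theorem. Given a bounded sequence $\{v_k\} \subset H^s(\Omega)$, extract a subsequence $v_{k_j} \to v$ in $L^2(\Omega)$. The already proved continuous embedding yields boundedness of $\{v_{k_j}\}$ in $L^{\mathfrak{q}^*}(\Omega)$. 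For $\mathfrak{q} \in [2,\mathfrak{q}^*)$, write $\mathfrak{q} = 2\theta + (1-\theta)\mathfrak{q}^*$ with $\theta \in (0,1]$ and apply the interpolation inequality
\begin{equation*}
\|v_{k_j} - v\|_{L^{\mathfrak{q}}(\Omega)} \leq \|v_{k_j}-v\|_{L^2(\Omega)}^{\lambda}\|v_{k_j}-v\|_{L^{\mathfrak{q}^*}(\Omega)}^{1-\lambda},
\end{equation*}
with $\lambda \in (0,1]$ determined by $\mathfrak{q}$, so that the right-hand side tends to zero. For $\mathfrak{q} \in [1,2)$, boundedness of $\Omega$ together with H\"older yields $L^2(\Omega) \hookrightarrow L^{\mathfrak{q}}(\Omega)$, making compactness immediate.

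\textbf{Anticipated obstacle.} The most delicate ingredient is the critical embedding into $L^{\mathfrak{q}^*}$, since the endpoint case cannot be treated by elementary means and requires the Hardy--Littlewood--Sobolev inequality (or an equivalent maximal function argument). Once this is in hand, everything else reduces to the extension theorem and interpolation, both of which are well-established on Lipschitz domains. In practice, since these facts are classical (see e.g.\ the monographs of Tartar or McLean already cited), the proof could simply invoke them; the sketch above is the route I would take if a self-contained argument were desired.
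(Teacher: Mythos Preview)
Your proposal is correct. The paper itself takes a much shorter route: it simply cites two theorems from a standard reference (Adams--Fournier, \emph{Sobolev Spaces}) for the continuous embedding and its compactness, without any further argument. You anticipated this possibility at the end of your sketch.

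What you have written is therefore a genuine self-contained proof where the paper offers only a citation. Your extension-plus-HLS argument for the critical exponent and the $L^2$-compactness-plus-interpolation argument for the subcritical range are both standard and correct routes. One minor point of sloppiness: when you write $\mathfrak{q} = 2\theta + (1-\theta)\mathfrak{q}^*$ and then invoke the $L^p$ interpolation inequality with a different parameter $\lambda$, recall that the interpolation parameter is determined by $1/\mathfrak{q} = \lambda/2 + (1-\lambda)/\mathfrak{q}^*$, not by the arithmetic interpolation of the exponents themselves; you seem to be aware of this since you switch to $\lambda$, but the phrasing could be tightened. Otherwise the argument is sound, and the trade-off is simply one of length versus self-containment: the paper defers entirely to the literature, while your sketch would make the result accessible without consulting Adams--Fournier.
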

\begin{proof}
A proof of \EO{$H^s(\Omega) \hookrightarrow L^{\mathfrak{q}}(\Omega)$} can be found in \cite[Theorem 7.34]{MR2424078} while the compactness of the embedding \EO{for $\mathfrak{q} < 2n/(n-2s)$} follows from \cite[Theorem 6.3]{MR2424078}.
\end{proof}

%%%%%%%%%%%%%%%%%%%%%%%%%%%%%%%%%%%%%%%%%%%%%%%%%%%%%%%%%%%%%%%%%%%%%%%%%%%%%

\subsection{The fractional Laplace operator}
\label{section:the_fractional_laplacian}
\EO{For $s \in (0,1)$ and smooth functions $w:\mathbb{R}^n \rightarrow \mathbb{R}$, there are several equivalent definitions of $(-\Delta)^s$ in $\mathbb{R}^n$. In fact, $(-\Delta)^s$ can be naturally defined} via Fourier transform: $\mathcal{F}( (-\Delta)^s w) (\xi) = | \xi |^{2s} \mathcal{F}(w) (\xi)$. Equivalently, $(-\Delta)^s$ can be defined by means of the following pointwise formula:
\begin{equation}
 (-\Delta)^s w(x) = C(n,s)\, \mathrm{ p.v. } \int_{\mathbb{R}^n} \frac{w(x) - w(y)}{|x-y|^{n+2s}} \mathrm{d}y,
 \qquad
 C(n,s) = \frac{2^{2s} s \Gamma(s+\frac{n}{2})}{\pi^{\frac{n}{2}}\Gamma(1-s)}.
 \label{eq:integral_definition}
\end{equation}
Here, $\textrm{p.v.}$ stands for the Cauchy principal value and $C(n,s)$ is a normalization constant that is introduced \EO{to guarantee that definition \eqref{eq:integral_definition} is equivalent to the one via Fourier transform; see \cite[Chapter 1, \S 1]{Landkof} for details.} We notice that \eqref{eq:integral_definition} clearly displays the nonlocal structure of $ (-\Delta)^s$: computing $(-\Delta)^s w(x)$ requires the values of $w$ at points arbitrarily far away from $x$. In addition to these two definitions, several other \emph{equivalent} definitions of $(-\Delta)^s$ in $\mathbb{R}^n$ are available in the literature \cite{MR3613319}---for instance, the ones based on the Balakrishnan formula 
% \cite{MR115096} 
and a suitable harmonic extension.
% \cite{MR2354493}.

In \EO{bounded domains there are also several definitions of $(-\Delta)^s$. For} functions supported in $\bar \Omega$, we may utilize the \emph{integral representation} \eqref{eq:integral_definition} to define $(-\Delta)^s$. This gives rise to the so-called \emph{restricted} or \emph{integral} fractional Laplacian, which, from now on, we shall simply refer to as the \emph{integral fractional Laplacian}. Notice that we have materialized a zero Dirichlet condition by restricting $(-\Delta)^s$ to acting only on functions that are zero outside $\Omega$. \EO{We must mention that in bounded domains, and in addition to the integral fractional Laplacian, there are, at least, two other \emph{nonequivalent} definitions of nonlocal operators related to the fractional Laplacian: the \emph{regional} and the \emph{spectral} fractional Laplacians; see the discussion in \cite[\S 6]{MR3503820}.}

To present suitable weak formulations for problems involving 
$(-\Delta)^s$, we \EO{introduce 
\begin{equation}
\label{eq:bilinear_form}
\mathcal{A}:\tilde{H}^{s}(\Omega)\times\tilde{H}^{s}(\Omega) \rightarrow \mathbb{R},
\qquad
 \mathcal{A}(v,w) := \tfrac{C(n,s)}{2} ( v, w )_{\tilde{H}^{s}(\Omega)}.
%  \iint_{\mathbb{R}^n \times \mathbb{R}^n} \frac{( v(x) - v(y) ) (w(x)-w(y))}{|x-y|^{n+2s}} \mathrm{d}x \mathrm{d}y.
\end{equation}
We notice that $\mathcal{A}$ is just a multiple of the inner product in $\tilde{H}^{s}(\Omega)$. In particular, $\mathcal{A}$ is bilinear and bounded.}
% It is immediate that $\mathcal{A}$ is a bilinear and bounded form on $\tilde{H}^s(\Omega) \times \tilde{H}^{s}(\Omega)$. 
We denote by $\| \cdot \|_s$ the norm induced by $\mathcal{A}$:
$
\| v \|_s := \sqrt{\mathcal{A}(v,v)}
= \mathfrak{C}(n,s) |v|_{\tilde{H}^s(\Omega)}
$
with
$
\mathfrak{C}(n,s) = \sqrt{C(n,s)/2}.
$
% where $\mathfrak{C}_{n,s} = \sqrt{C(n,s)}/2$ 
% and $|\cdot|_{s,\mathbb{R}^n}$ is defined in \eqref{eq:Gagliardo_semi_norm}.

% The nonlocality of $(-\Delta)^s$ is also reflected by the fact the natural space in which $\mathcal{A}$ is set is the zero-extension fractional Sobolev space $\tilde H^s(\Omega)$ and the norm therein is not sub-additive with respect to domain partitions.

% \subsection{The spectral fractional Laplacian}
% \label{sec:the_spectral_fractional_laplacian}
% \EO{Let 
% \[
% w = \sum_{k\in\mathbb{N}} w_k \varphi_k, 
% \qquad
% w_k = (w,\varphi_k)_{L^2(\Omega)},
% \qquad
% k \in \mathbb{N}.
% \]
% For functions $w \in \mathbb{H}^1(\Omega)$, the operator $(-\Delta): \mathbb{H}^1(\Omega) \rightarrow \mathbb{H}^{-1}(\Omega)$ takes the form $(-\Delta) w = \sum_{k \in \mathbb{N}} \lambda_k w_k \varphi_k$. For $s \in (0,1)$, we define the \emph{spectral} fractional Laplacian}
% %$(-\Delta_{\Omega})^s: \mathbb{H}^s(\Omega) \rightarrow \mathbb{H}^{-s}(\Omega)$ by
% \begin{equation}
% \label{eq:spectral_fractional_laplacian}
% (-\Delta_{\Omega})^s: \mathbb{H}^s(\Omega) \rightarrow \mathbb{H}^{-s}(\Omega),
% \qquad
% (-\Delta_{\Omega})^s w = \sum_{k= 1}^{\infty} \lambda_k^s w_k \varphi_k.
% \end{equation}

\subsection{Assumptions}\label{sec:assumption}

\EO{The following set of assumptions allows us to perform an analysis for the fractional semilinear optimal control problem including existence of solutions and first and necessary and sufficient second order optimality conditions. We must immediately mention that depending on the property under interest, the requirements differ and would have to be specified anew. To avoid this, we list a set of assumptions to hold throughout the article.}
% on $a$ and $L$.}
% We must, however,  immediately mention that some of the results obtained in this work are valid under less restrictive requirements; when possible we explicitly mention the assumptions on $a$ and $L$ that are needed to obtain a particular result.

\begin{enumerate}[label=(A.\arabic*)]
\item \label{A1} $a:\Omega\times \mathbb{R}\rightarrow  \mathbb{R}$ is a Carath\'eodory function of class $C^2$ with respect to the second variable and $a(\cdot,0)\in L^{r}(\Omega)$ for $r>n/2s$.

\item \label{A2} $\frac{\partial a}{\partial u}(x,u)\geq 0$ for a.e.~$x\in\Omega$ and for all $u \in \mathbb{R}$.

\item \label{A3} For all $\mathfrak{m}>0$, there exists a positive constant $C_{\mathfrak{m}}$ such that 
\begin{equation*}
\sum_{i=1}^{2}\left|\frac{\partial^{i} a}{\partial u^{i} }(x,u)\right|\leq C_{\mathfrak{m}},
\qquad
\left|\frac{\partial^{2} a}{\partial u^{2} }(x,v)- \frac{\partial^{2} a}{\partial u^{2} }(x,w)\right|\leq C_{\mathfrak{m}} |v-w|
\end{equation*}
for a.e.~$x\in \Omega$ and $u,v,w \in [-\mathfrak{m},\mathfrak{m}]$.
\end{enumerate}

\begin{enumerate}[label=(B.\arabic*)]
\item \label{B1} $L: \Omega \times \mathbb{R} \rightarrow \mathbb{R}$ is a Carath\'eodory function of class $C^2$ with respect to the second variable and $L(\cdot,0) \in L^1(\Omega)$.

\item \label{B2} For all $\mathfrak{m}>0$, there exist $\psi_{\mathfrak{m}}, \phi_{\mathfrak{m}} \in L^{r}(\Omega)$, with $r>n/2s$, such that
\begin{equation*}
\left|
\frac{\partial L}{\partial u}(x,u)
\right|
\leq \psi_{\mathfrak{m}}(x),
\qquad
\left|
\frac{\partial^2 L}{\partial u^2}(x,u)
\right|
\leq \phi_{\mathfrak{m}}(x),
\end{equation*}
for a.e.~$x\in \Omega$ and $u \in [-\mathfrak{m},\mathfrak{m}]$.
\end{enumerate}

We \EO{briefly comment on the set of assumptions \ref{A1}--\ref{B2}. Assumption \ref{A2} allows us to apply the theory of \emph{monotone operators} for \eqref{eq:state_equation} while the fact that $a(\cdot,0) \in L^r(\Omega)$ in \ref{A1} guarantees that solutions to fractional semilinear PDEs are bounded in $L^{\infty}(\Omega)$. Assumptions on the first and second derivatives of $a, L$ are needed to perform first and second order optimality conditions, respectively.}
%
%The following assumptions are particularly needed to derive regularity estimates:
%
%\begin{enumerate}[label=(C.\arabic*)]
%\item \label{C1} $a(\cdot,0) \in L^2(\Omega) \cap H^{\frac{1}{2} - s - \epsilon}(\Omega)$ and $\frac{\partial a}{\partial u}(\cdot,0) \in H^{\beta}(\Omega)$ for every $\beta < \tfrac{1}{2}$.
%
%\item \label{C2} For every $\mathfrak{m}>0$ and $u \in [-\mathfrak{m},\mathfrak{m}]$, $\frac{\partial L}{\partial u}(\cdot,u) \in L^2(\Omega) \cap H^{\frac{1}{2} - s - \epsilon}(\Omega)$.
%\end{enumerate}
%In \ref{C1} and \ref{C2}, $\epsilon>0$ denotes an arbitrarily small positive constant.
%
%
\section{Fractional semilinear PDEs}
\label{sec:state_equation}

In this section, \EO{we introduce a weak formulation for a fractional semilinear PDE and review results regarding the well-posedness of such a formulation and regularity estimates for its solution.}

\subsection{\EO{Weak formulation}}
Let \EO{$s \in (0,1)$, $f \in H^{-s}(\Omega)$, and $a = a(x,u) : \Omega \times \mathbb{R} \rightarrow \mathbb{R}$ be a Carath\'eodory function that is monotone increasing in $u$. Assume that, for every $\mathfrak{m}>0$, there exits
\begin{equation}
\varphi_{\mathfrak{m}} \in L^{\mathfrak{t}}(\Omega):
\quad
|a(x,u)| \leq | \varphi_{\mathfrak{m}}(x)|
~\textrm{a.e.}~x \in \Omega,~u \in [-\mathfrak{m},\mathfrak{m}],
\quad
\mathfrak{t} =2n/(n+2s).
\label{eq:assumption_on_phi_state_equation}
\end{equation}
Within this setting, we introduce the following weak formulation:} 
% Find $u \in \tilde H^s(\Omega)$ such that
\begin{equation}
\label{eq:weak_semilinear_pde_integral}
u \in \tilde H^s(\Omega):
\quad
\mathcal{A}(u,v)  +  \langle a(\cdot,u),v \rangle = \langle f , v \rangle 
\quad \forall v \in \tilde H^{s}(\Omega).
\end{equation}
% where the bilinear form $\mathcal{A}$ is defined in \eqref{eq:bilinear_form}. 
% For a given forcing term $f \in \mathbb{H}^{-s}(\Omega)$, we will also consider the  following \emph{fractional}, \emph{semilinear}, and \emph{elliptic} problem involving the spectral definition $(-\Delta_{\Omega})^s$ of fractional diffusion: Find $u \in \mathbb{H}^s(\Omega)$ such that
% \begin{equation}
%  \label{eq:weak_semilinear_pde_spectral}
% \quad
% \langle (-\Delta_{\Omega})^s u, v \rangle
% + 
%  \langle a(\cdot,u),v \rangle = \langle f , v \rangle 
%   \quad \forall v \in \mathbb{H}^{-s}(\Omega).
% \end{equation}

The following existence and uniqueness result follows from \cite[Theorem 3.1]{MR4358465}.

\begin{theorem}[\EO{well-posedness of fractional semilinear PDEs}]
Let $n \geq 2$,  $s \in (0,1)$, and $r>n/2s$. 
% Let $\Omega \subset \mathbb{R}^n$ be an open and bounded domain with Lipschitz boundary. 
If $f \in L^{r}(\Omega)$, $a$ satisfies \eqref{eq:assumption_on_phi_state_equation}, and $a(\cdot,0) \in L^r(\Omega)$, 
then problem \eqref{eq:weak_semilinear_pde_integral} admits a unique solution $u \in \tilde H^s(\Omega) \cap L^{\infty}(\Omega)$. In addition, we have the bound
\begin{equation}
 | u |_{H^s(\mathbb{R}^n)} + \| u \|_{L^{\infty}(\Omega)} \lesssim \| f - a(\cdot,0) \|_{L^{r}(\Omega)},
 \label{eq:stability_integral}
\end{equation}
with a hidden constant that is independent of $u$, $a$, and $f$.
\label{thm:stata_equation_well_posedness_integral}
\end{theorem}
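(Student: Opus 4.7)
The plan is to combine monotone operator theory with a Stampacchia truncation argument. The bilinear form $\mathcal{A}$ is a positive multiple of the inner product on $\tilde H^s(\Omega)$, hence symmetric, bounded, and coercive, and the Nemytskii operator $u\mapsto a(\cdot,u)$ is monotone in $u$. These ingredients should yield existence and uniqueness in $\tilde H^s(\Omega)$, while testing against truncations of the solution should produce the $L^\infty$ bound.

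\textbf{Existence (via approximation).} Because the growth condition \eqref{eq:assumption_on_phi_state_equation} is only local in $u$, I would first replace $a$ by a globally bounded nonlinearity. Let $a_k(x,u):=a(x,\pi_k(u))$ where $\pi_k$ is the projection onto $[-k,k]$; then $u\mapsto a_k(\cdot,u)$ is bounded, continuous, and monotone from $\tilde H^s(\Omega)$ into $L^{\mathfrak t}(\Omega)\hookrightarrow H^{-s}(\Omega)$, the second embedding following by duality from Proposition \ref{pro:Sobolev_embedding} since $\mathfrak t=2n/(n+2s)$ is conjugate to $2n/(n-2s)$. The operator
\[
\langle T_k(u),v\rangle := \mathcal{A}(u,v) + \langle a_k(\cdot,u),v\rangle - \langle f,v\rangle
\]
is therefore bounded, monotone, coercive, and hemicontinuous on $\tilde H^s(\Omega)$, so the Browder--Minty theorem delivers a unique solution $u_k\in\tilde H^s(\Omega)$ of $T_k(u_k)=0$.

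\textbf{Uniform $L^\infty$ bound (Stampacchia).} Fix $M>0$ and test the truncated equation with $v=(u_k-M)_+\in\tilde H^s(\Omega)$. Since the nonlocal bilinear form $\mathcal{A}$ dominates its evaluation on the truncation and monotonicity gives $\langle a_k(\cdot,u_k)-a_k(\cdot,0),(u_k-M)_+\rangle\geq 0$, one obtains
\[
\mathcal{A}\bigl((u_k-M)_+,(u_k-M)_+\bigr)\ \leq\ \langle f-a(\cdot,0),(u_k-M)_+\rangle.
\]
Combining the Sobolev embedding $\tilde H^s(\Omega)\hookrightarrow L^{2n/(n-2s)}(\Omega)$ with H\"older's inequality on the superlevel set $A(M):=\{u_k>M\}$ produces a recursion of the form $(M'-M)\,|A(M')|^{\gamma}\leq C\,\|f-a(\cdot,0)\|_{L^r(\Omega)}\,|A(M)|^{\beta}$ with $\beta>\gamma$ precisely because $r>n/(2s)$. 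The classical Stampacchia lemma then yields $\|u_k^+\|_{L^\infty(\Omega)}\lesssim\|f-a(\cdot,0)\|_{L^r(\Omega)}$, and the symmetric argument with $(u_k+M)_-$ controls $u_k^-$. Choosing $k$ larger than this uniform bound, $a_k(\cdot,u_k)=a(\cdot,u_k)$, so $u_k$ solves \eqref{eq:weak_semilinear_pde_integral}.

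\textbf{Remaining estimates and uniqueness.} The bound on $|u|_{H^s(\mathbb{R}^n)}$ is obtained by testing \eqref{eq:weak_semilinear_pde_integral} with $u$ itself, exploiting $\langle a(\cdot,u)-a(\cdot,0),u\rangle\geq 0$ and using the $L^\infty$ bound on $u$ to handle $\langle a(\cdot,0),u\rangle$. For uniqueness, if $u_1,u_2$ both solve \eqref{eq:weak_semilinear_pde_integral}, testing the difference with $u_1-u_2$ and invoking monotonicity of $a$ gives $\mathcal{A}(u_1-u_2,u_1-u_2)\leq 0$, whence $u_1=u_2$. The main obstacle will be the Stampacchia step: carefully tracking the interplay between the nonlocal bilinear form, the fractional Sobolev embedding constants, and the exponent $r$ to ensure the recursion closes with a ratio that produces a finite $L^\infty$ bound under precisely the hypothesis $r>n/(2s)$.
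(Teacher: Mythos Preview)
The paper does not actually prove this theorem: it simply invokes \cite[Theorem~3.1]{MR4358465}, so there is no in-paper argument to compare against. Your outline---truncate the nonlinearity to apply Browder--Minty, run a Stampacchia iteration using $(u_k-M)_+$ to get a $k$-independent $L^\infty$ bound, then pass to the original equation and read off the $\tilde H^s$ estimate and uniqueness from monotonicity---is the standard route for this class of problems and is correct in structure. In particular, the key pointwise inequality $\mathcal{A}(u,(u-M)_+)\geq \mathcal{A}((u-M)_+,(u-M)_+)$ for the integral fractional Laplacian does hold (a short case analysis on the integrand suffices), and the exponent bookkeeping in the Stampacchia recursion closes exactly under $r>n/(2s)$ via $\tilde H^s(\Omega)\hookrightarrow L^{2n/(n-2s)}(\Omega)$. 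This is almost certainly how the cited reference proceeds, so your proposal is in line with the intended argument; just be precise when you write it up about why $(u_k-M)_+\in\tilde H^s(\Omega)$ (it vanishes in $\Omega^c$ because $u_k$ does and $M>0$) and about the pairing $\langle a(\cdot,0),u\rangle$ in the final energy estimate, which you control via $a(\cdot,0)\in L^r(\Omega)$ and the already established bound on $\|u\|_{L^\infty(\Omega)}$ (or more simply via $L^{\mathfrak t}$--$L^{2n/(n-2s)}$ duality).
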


\subsection{Regularity estimates}
In order to derive a priori error estimates for suitable finite element discretizations of problem \eqref{eq:weak_semilinear_pde_integral}, \EO{it is of fundamental importance to understand the regularity properties} of the solution to \eqref{eq:weak_semilinear_pde_integral}. 

\subsubsection{\EO{The linear case}}
We begin our studies by providing some basic regularity results for the linear case $a \equiv 0$.

\begin{proposition}[H\"older regularity]
Let $s \in (0,1)$, and let \EO{$\Omega$ be a bounded Lipschitz domain} satisfying an exterior ball condition. Let $\mathsf{u}$ be the solution to $(-\Delta)^s \mathsf{u} = \mathsf{f}$ in $\Omega$ and $\mathsf{u} = 0$ in $\Omega^c$. If 
$
\mathsf{f} \in L^{\infty}(\Omega),
$ 
then $\mathsf{u} \in C^s(\mathbb{R}^n)$ and 
\begin{equation}
 \| \mathsf{u} \|_{C^s(\mathbb{R}^n)} \lesssim \| \mathsf{f} \|_{L^{\infty}(\Omega)},
 \label{eq:regularity_state_Holder}
  \end{equation}
with a hidden constant that only depends on $\Omega$ and $s$.
  \label{pro:state_regularity_Holder}
 \end{proposition}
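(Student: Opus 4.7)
The plan is to reduce the statement to the celebrated global H\"older estimate of Ros-Oton and Serra for the integral fractional Laplacian on domains satisfying an exterior ball condition. Concretely, the proof that I would carry out proceeds in three stages: first establish existence and $L^\infty$-stability of $\mathsf u$, then obtain interior H\"older regularity away from $\partial\Omega$, and finally upgrade this to a global $C^s(\mathbb{R}^n)$ bound by exploiting the exterior ball condition to construct barriers at every boundary point.

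For the first step, I would view $\mathsf u$ as the unique weak solution in $\tilde H^s(\Omega)$ of the linear Dirichlet problem, a special case $a\equiv 0$ of Theorem \ref{thm:stata_equation_well_posedness_integral}. The stability bound \eqref{eq:stability_integral} then already yields $\|\mathsf u\|_{L^\infty(\Omega)} \lesssim \|\mathsf f\|_{L^\infty(\Omega)}$, since $\Omega$ is bounded so $\mathsf f\in L^{r}(\Omega)$ for every $r>n/(2s)$. This provides the base $L^\infty$ control that feeds into all subsequent H\"older estimates.

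For the second step, I would appeal to the interior regularity theory for the pointwise operator \eqref{eq:integral_definition}: in any ball $B_{2r}(x_0)\subset\Omega$, classical Silvestre-type estimates for bounded solutions to $(-\Delta)^s \mathsf u=\mathsf f$ with $\mathsf f\in L^\infty$ yield $\mathsf u\in C^{s}_{\mathrm{loc}}(\Omega)$ with quantitative bounds depending on $\|\mathsf u\|_{L^\infty(\mathbb{R}^n)}$ and $\|\mathsf f\|_{L^\infty(\Omega)}$. Combined with the zero extension to $\Omega^c$, this gives H\"older control of $\mathsf u$ at every pair of points that lie strictly inside $\Omega$ and at pairs both lying in $\Omega^c$.

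The hard part, which is also the heart of the Ros-Oton--Serra argument, is the boundary behavior: I would use the exterior ball condition to construct, at each point $x_0\in\partial\Omega$, a supersolution barrier of the form $c\,d(x,\partial\Omega)^{s}$ using the explicit torsion-type solution on a ball (the function $(1-|y|^2)_+^{s}$ satisfies $(-\Delta)^s$ of it equal to a constant in $B_1$). The exterior ball at $x_0$ allows this barrier to dominate $|\mathsf u|$ in a neighborhood, producing the crucial boundary decay $|\mathsf u(x)|\le C\|\mathsf f\|_{L^\infty(\Omega)}\,d(x,\partial\Omega)^{s}$. Combining this decay with the interior $C^s$ estimate and a standard interpolation argument that balances interior and boundary scales yields the desired global bound \eqref{eq:regularity_state_Holder}. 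The main obstacle is precisely the construction of these barriers in the Lipschitz, exterior-ball setting rather than in a smooth domain: without the exterior ball condition the power $s$ is in general unattainable, and this is exactly why the hypothesis is imposed in the statement.
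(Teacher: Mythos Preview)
Your proposal is correct and takes essentially the same approach as the paper: the paper's entire proof is the one-line citation ``See \cite[Proposition 1.1]{MR3168912}'', i.e.\ the Ros-Oton--Serra global $C^s$ estimate under an exterior ball condition, which is precisely the result whose proof you are sketching. So you are simply unpacking what the paper cites; there is nothing to add or correct.
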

\begin{proof}
See \cite[Proposition 1.1]{MR3168912}\EO{.}
\end{proof}

\EO{The following remark presents an example that is essential.}

\begin{remark}[optimal regularity]\rm
Let $\Omega = B(0,1) \subset \mathbb{R}^n$ and $f \equiv 1$. Within this setting, the solution to $(-\Delta)^s \mathsf{u} = \mathsf{f}$ in $\Omega$ and $\mathsf{u} = 0$ in $\Omega^c$ is given by \cite{MR137148}
% ,MR2722789,MR3168912}
\[
 \mathsf{u}(x) = \frac{\Gamma(\frac{n}{2})}{2^{2s}\Gamma(\frac{n+2s}{2})\Gamma(1+s)}\left(1-|x|^2\right)^s_{+}, 
 \qquad 
 t_{+} = \max \{ t,0\}.
\]
The solution $\mathsf{u} \in C^s(\bar \Omega)$ but it does not belong to $C^{\alpha}(\bar \Omega)$ for any $\alpha>s$. In this sense, the $C^s(\bar \Omega)$-regularity result stated in Proposition \ref{pro:state_regularity_Holder} is optimal \cite[page 276]{MR3168912}.
\label{rk:example}
\end{remark}

We present a regularity result \EO{in} Sobolev spaces; see \cite[Theorem 2.1]{MR4283703} and \cite{MR4530901}.

\begin{proposition}[Sobolev regularity]
Let $s \in (0,1)$, and let $\Omega$ be a bounded Lipschitz domain. Let $\mathsf{u}$ be the solution to $(-\Delta)^s \mathsf{u} = \mathsf{f}$ in $\Omega$ and $\mathsf{u} = 0$ in $\Omega^c$. If $\mathsf{f} \in L^2(\Omega)$, \EO{then there exist constants $C, \zeta >0$ such that} $\mathsf{u} \in H^{s + \theta-\epsilon}(\Omega)$, where $\theta = \tfrac{1}{2}$ for $\tfrac{1}{2}<s<1$ and $\theta = s - \epsilon >0$ for $0 < s \leq \tfrac{1}{2}$; $0<\epsilon <s$. In addition, we have
\begin{equation}
\begin{aligned}
   \| \mathsf{u} \|_{H^{2s -2\epsilon}(\Omega)} 
  &
  \leq \EO{C} 
  \epsilon^{-\frac{1}{2}-\EO{\zeta}} \| \mathsf{f} \|_{L^2(\Omega)}, \quad s \in (0, \tfrac{1}{2}],
   \quad \forall 0<\epsilon<s,
   \\
  \| \mathsf{u} \|_{H^{s + \frac{1}{2}-\epsilon}(\Omega)} 
  &
  \leq \EO{C} 
  \epsilon^{-\frac{1}{2}} \| \mathsf{f} \|_{L^2(\Omega)}, \quad s \in (\tfrac{1}{2},1),
   \quad \forall 0<\epsilon<s+\tfrac{1}{2}.
  \end{aligned}
 \label{eq:regularity_state_Lipschitz_new}
  \end{equation}
%   where $\zeta$ in the statement of \rm \cite[Theorem 2.1]{MR4283703}. \it 
The constant $C$ is independent of $\epsilon$ but \EO{depends} on $\Omega$, $n$, and $s$.
  \label{pro:state_regularity_Lipschitz_new}
 \end{proposition}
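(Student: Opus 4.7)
The plan is to combine a global energy estimate with interior $H^{2s}$ regularity and a careful boundary analysis via dyadic decomposition. First, testing the weak formulation against $\mathsf{u}$ and using the embedding in Proposition~\ref{pro:Sobolev_embedding} yields the baseline bound $\|\mathsf{u}\|_{\tilde H^s(\Omega)} \lesssim \|\mathsf{f}\|_{L^2(\Omega)}$. Next, for any cut-off $\varphi \in C_c^\infty(\Omega)$, the Fourier-side definition of $(-\Delta)^s$ together with commutator estimates gives $\|\varphi\mathsf{u}\|_{H^{2s}(\mathbb{R}^n)} \lesssim \|\mathsf{f}\|_{L^2(\Omega)} + \|\mathsf{u}\|_{\tilde H^s(\Omega)}$, so that $\mathsf{u} \in H^{2s}_{\mathrm{loc}}(\Omega)$. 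This recovers the full natural regularity away from $\partial\Omega$; the obstacle is to extend the estimate up to the boundary when $\partial\Omega$ is only Lipschitz, since one cannot straighten the boundary by a smooth diffeomorphism.

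To deal with the boundary, I would introduce the tubular neighborhood $\Omega_\delta = \{ x \in \Omega : d(x,\partial\Omega) < \delta \}$ and cover it by a Whitney decomposition of balls $B_k$ of radius $\rho_k \sim 2^{-k}$ with $\mathrm{dist}(B_k,\partial\Omega) \sim 2^{-k}$. Rescaling the interior estimate to each $B_k$ controls $\|\mathsf{u}\|_{H^{2s}(B_k)}$ in terms of a power of $2^k$ times $\|\mathsf{f}\|_{L^2(\Omega)} + \|\mathsf{u}\|_{\tilde H^s(\Omega)}$. Splitting the double integral defining the Gagliardo seminorm of order $s+\theta-\epsilon$ into contributions from pairs $(B_j,B_k)$, the dominant (near-diagonal) pieces collapse to a geometric series whose sum diverges like $\epsilon^{-1}$ at the critical boundary threshold $\theta = \tfrac{1}{2}$; taking a square root produces the factor $\epsilon^{-1/2}$ and matches \eqref{eq:regularity_state_Lipschitz_new} for $s \in (\tfrac{1}{2},1)$. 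Intuitively, this threshold encodes the barrier behavior $\mathsf{u}(x) \sim d(x,\partial\Omega)^s$, which belongs to $H^{s+\frac{1}{2}-\epsilon}$ but not to $H^{s+\frac{1}{2}}$.

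For $s \leq \tfrac{1}{2}$, the interior $H^{2s}$ rate is already weaker than $H^{s+\frac{1}{2}}$, so the bottleneck is the interior estimate rather than the boundary barrier, and the global regularity degrades to $H^{2s-2\epsilon}(\Omega)$; the additional factor $\epsilon^{-\zeta}$ arises from an interpolation between $L^2$ and $\tilde H^s(\Omega)$ required because $2s - 2\epsilon$ sits slightly below the natural interior threshold $2s$, so that one must trade regularity for a controlled blow-up (for instance via a fractional Hardy-type inequality on Lipschitz domains, which itself degenerates as $\epsilon \to 0$). The hardest step is undoubtedly this boundary analysis: on a merely Lipschitz $\partial\Omega$ one cannot invoke the sharper Schauder-type estimates $\mathsf{u}/d^s \in C^{\alpha}(\bar\Omega)$ available in the $C^{1,1}$ setting, so the $\epsilon$-dependent constants in \eqref{eq:regularity_state_Lipschitz_new} must be tracked explicitly through every step of the dyadic summation rather than absorbed into a hidden constant.
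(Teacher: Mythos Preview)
The paper does not prove this proposition; it is quoted from the literature, specifically \cite[Theorem~2.1]{MR4283703} and \cite{MR4530901}. So there is no in-paper argument to compare against, only the cited sources.

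Your sketch captures the correct heuristics: the boundary behavior $\mathsf{u}(x)\approx d(x,\partial\Omega)^s$ is exactly what limits the global Sobolev regularity to $H^{s+\frac12-\epsilon}$, and a Whitney/dyadic decomposition combined with scaled interior estimates is indeed one route that appears in the literature. The baseline energy bound and the local $H^{2s}$ interior regularity are both standard. Where your outline is thin is in two places. First, the commutator estimate you invoke for $\varphi\mathsf{u}$ is not free on a merely Lipschitz domain: the nonlocal operator does not localize cleanly, and the actual arguments in \cite{MR4283703} pass instead through weighted Sobolev spaces (with weight $d(x,\partial\Omega)^\alpha$) and then embed these into unweighted $H^{s+\theta-\epsilon}$; the $\epsilon^{-1/2}$ arises from that embedding, not from a naked geometric series over Whitney balls. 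Second, your explanation of the extra exponent $\zeta$ for $s\le\tfrac12$ is not quite right: it is not an ``interior bottleneck'' or a trade-off against $H^{2s}$, but rather a geometric constant tied to the Lipschitz character of $\partial\Omega$ that enters through a Besov-type extension/interpolation argument specific to rough boundaries. If you want to turn this into a self-contained proof you would need to either reproduce the weighted-space machinery of \cite{MR4283703} or make the dyadic summation fully rigorous, including the off-diagonal terms in the Gagliardo seminorm, which you have not addressed.
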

% \begin{proof}
% See \cite[Theorem 2.1]{MR4283703}.
% \end{proof}

The following comments are in order \EO{with} \cite{MR4283703}. First, the \emph{Lipschitz} assumption on $\Omega$ is \emph{optimal} in the sense that if $\Omega$ was a $C^{\infty}$ domain, then no further regularity could be inferred. \EO{Thus, reentrant corners play no role in the global regularity of solutions: the boundary behavior $\mathsf{u}(x) \approx \mathrm{dist}(x,\partial \Omega)^{s}v(x)$, with $v$ being H\"older continuous up to $\partial \Omega$, dominates any point singularities that could originate from them. We refer the interested reader to \cite{corner_singularities} for further details.} Second, in general the smoothness of the right-hand side cannot make solutions any smoother than $\cap_{\epsilon >0} \tilde H^{s+1/2-\epsilon}(\Omega)$. \EO{These two comments are illustrated within the setting of the example in Remark \ref{rk:example}.}

For $s \in (0,\tfrac{1}{2})$, the regularity properties of Proposition \ref{pro:state_regularity_Lipschitz_new} can be improved but
at the expense of considering a smoother domain $\Omega$ and a smoother forcing term $\mathsf{f}$.

\begin{proposition}[Sobolev regularity]
Let $s \in (0,\tfrac12)$, and let $\Omega$ be a bounded Lipschitz domain satisfying an exterior ball condition. Let $\mathsf{u}$ \EO{solve} $(-\Delta)^s \mathsf{u} = \mathsf{f}$ in $\Omega$ and $\mathsf{u} = 0$ in $\Omega^c$. If $\mathsf{f} \in C^{\frac{1}{2}-s}(\bar \Omega)$, then $\mathsf{u} \in H^{s + \frac{1}{2}-\epsilon}(\Omega)$ 
% for every $\epsilon>0$, 
\EO{and}
\begin{equation}
  \| \mathsf{u} \|_{H^{s + \frac{1}{2}-\epsilon}(\Omega)} 
  \lesssim 
  \epsilon^{-1} \| \mathsf{f} \|_{C^{\frac{1}{2}-s}(\bar \Omega)}, \quad s \in (0,\tfrac{1}{2}),
   \quad \forall 0<\epsilon<s+\tfrac{1}{2},
 \label{eq:regularity_state_Lipschitz_old}
  \end{equation}
with a  hidden constant that is independent of $\epsilon$ but \EO{depends} on $\Omega$, $n$, and $s$.
  \label{pro:state_regularity_Lipschitz_old}
 \end{proposition}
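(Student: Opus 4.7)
The plan is to deduce the claimed Sobolev bound from the fine boundary Hölder regularity theory, by exploiting the factorization $\mathsf{u}(x) = \mathrm{dist}(x,\partial\Omega)^{s}\, v(x)$ with $v$ Hölder continuous up to $\partial\Omega$. This is the natural route because, as noted after Proposition \ref{pro:state_regularity_Lipschitz_new}, the boundary behavior $d^{s}$ is precisely what obstructs regularity beyond $s+\tfrac12$, so one needs a tool that sees this structure explicitly.

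First I would invoke the optimal Hölder boundary theory of Ros-Oton and Serra \cite{MR3168912}. Under the exterior ball condition and the assumption $\mathsf{f} \in C^{\frac{1}{2}-s}(\bar\Omega)$, the quotient $\mathsf{u}/d^{s}$ extends to a function $v \in C^{\frac{1}{2}-s}(\bar\Omega)$ with
\[
\|\mathsf{u}/d^{s}\|_{C^{\frac{1}{2}-s}(\bar\Omega)}
\lesssim \|\mathsf{f}\|_{C^{\frac{1}{2}-s}(\bar\Omega)},
\qquad d(x) := \mathrm{dist}(x,\partial\Omega).
\]
This supplies both the interior smoothness and the precise boundary profile of $\mathsf{u}$ in one package.

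Second, I would estimate the Gagliardo seminorm $[\mathsf{u}]_{H^{s+1/2-\epsilon}(\Omega)}$ by a dyadic decomposition into annular layers $\Omega_{k} := \{x \in \Omega : 2^{-k-1} < d(x) \le 2^{-k}\}$. On each $\Omega_{k}$, interior regularity for $(-\Delta)^{s}$ combined with scaling and the Hölder factorization produces a local bound of the form $\|\mathsf{u}\|_{H^{s+1/2-\epsilon}(\Omega_{k})}^{2} \lesssim 2^{-\beta_{\epsilon} k}\|\mathsf{f}\|_{C^{1/2-s}(\bar\Omega)}^{2}$ with $\beta_{\epsilon} = 2\epsilon + O(\epsilon^{2})$; the long-range interactions across layers in the Gagliardo seminorm can be controlled by the same factorization, using that $d^{s}$ lies in $H^{s+1/2-\epsilon}(\Omega)$ with seminorm bounded like $\epsilon^{-1/2}$. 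Summing the resulting geometric series $\sum_{k\ge 0} 2^{-\beta_{\epsilon} k}$ generates a factor $\epsilon^{-1}$, which is exactly the blow-up recorded in \eqref{eq:regularity_state_Lipschitz_old}, and adding the $L^{2}(\Omega)$ bound that follows from Theorem \ref{thm:stata_equation_well_posedness_integral} upgrades the seminorm estimate to a full $H^{s+1/2-\epsilon}(\Omega)$ estimate.

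The main obstacle I foresee is sharp bookkeeping of the $\epsilon$-dependence: a naive product-rule estimate on $d^{s}v$ combines two contributions that each blow up like $\epsilon^{-1/2}$, and one must check that these do not compound to give $\epsilon^{-3/2}$ or worse. The cleanest way to avoid this is to work in weighted Sobolev spaces with weight $d^{\,2\epsilon}$, where the layer-by-layer estimates decouple and Hardy-type embeddings into $H^{s+1/2-\epsilon}(\Omega)$ can be applied directly; alternatively one proceeds by real interpolation between the endpoint estimate on $\mathsf{u}/d^{s}$ and a coarser base bound. Either route yields the asserted dependence $\epsilon^{-1}$ and completes the proof.
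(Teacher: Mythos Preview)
The paper does not give a self-contained proof here; it simply refers to \cite[Theorem~3.3]{MR3893441}. Your sketch is essentially a reconstruction of the argument behind that reference (which in turn rests on the Acosta--Borthagaray analysis): exploit the Ros-Oton--Serra boundary H\"older theory to obtain the factorization $\mathsf{u} = d^{s}\,v$ with $v$ H\"older up to $\partial\Omega$, and then convert this into an $H^{s+1/2-\epsilon}$ bound via a boundary-layer or weighted-norm argument, with the $\epsilon^{-1}$ arising from a barely-convergent geometric sum. So in substance your plan matches the literature the paper is citing.

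One point deserves care. The specific claim you extract from \cite{MR3168912}---that $\mathsf{u}/d^{s}\in C^{1/2-s}(\bar\Omega)$ under merely an exterior ball condition---is stronger than what that paper states directly: their Proposition~1.1 (exterior ball) gives only $\mathsf{u}\in C^{s}(\mathbb{R}^{n})$, while the quotient regularity (their Theorem~1.2) is formulated for $C^{1,1}$ domains and, with $\mathsf{f}\in L^{\infty}$, yields a H\"older exponent strictly below $\min\{s,1-s\}$ rather than $1/2-s$. The route actually taken in \cite{MR3893441} is closer to the ``alternative'' you mention at the end: one works in weighted Sobolev spaces with weight $d^{\alpha}$, combines the global $C^{s}$ bound with interior higher-order estimates on dyadic layers, and then embeds into $H^{s+1/2-\epsilon}(\Omega)$ via a Hardy-type inequality. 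That version avoids needing the sharp quotient exponent and delivers the $\epsilon^{-1}$ dependence cleanly, without the compounding of two $\epsilon^{-1/2}$ factors that you rightly flag as a hazard in the direct product-rule approach.
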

\begin{proof}
See \cite[Theorem 3.3]{MR3893441}.
\end{proof}

%\begin{proposition}[Sobolev regularity on Lipschitz domains]
%Let \EO{$s \in (0,1)$, and let $\Omega$ be an open and bounded Lipschitz domain satisfying the exterior ball condition. Let $\mathsf{u}$ be the solution to the linear problem $(-\Delta)^s \mathsf{u} = \mathsf{f}$ in $\Omega$ and $\mathsf{u} = 0$ in $\Omega^c$. If 
%\[
%\mathsf{f} \in C^{\frac{1}{2}-s}(\bar \Omega)~\mathrm{ for }~s \in (0,\tfrac{1}{2}),
%\quad
%\mathsf{f} \in L^{\infty}(\Omega)~\mathrm{ for }~s=\tfrac{1}{2},
%\quad
%\mathsf{f} \in C^{\beta}(\bar \Omega)~\mathrm{ for }~s \in (\tfrac{1}{2},1),
%\]
%for some $\beta > 0$,  then, for every $\epsilon >0$, $\mathsf{u}$ belongs to $H^{s + \frac{1}{2}-\epsilon}(\Omega)$. In addition, we have 
%%
%\begin{equation}
% \| \mathsf{u} \|_{H^{s + \frac{1}{2}-\epsilon}(\Omega)} \lesssim \epsilon^{-1} \| \mathsf{f} \|_{\star},
% \label{eq:regularity_state_Lipschitz}
%  \end{equation}
% where $\| \cdot \|_{\star}$ denotes the $C^{\frac{1}{2}-s}(\bar \Omega)$-, $L^{\infty}(\Omega)$-, or $C^{\beta}(\bar \Omega)$-norm corresponding to whether $s$ is less than, equal  to, or greater than $\frac{1}{2}$. The hidden constant is independent of $\epsilon$ but depends on $\Omega$, $n$, $s$.}
%  \label{pro:state_regularity_Lipschitz}
% \end{proposition}
%\begin{proof}
%See 
%% \cite[Propositions 3.6 and 3.11]{MR3620141} and 
%\cite[Theorem 3.3]{MR3893441}.
%\end{proof}

\subsubsection{\EO{The semilinear case}}
We now derive a regularity result in H\"older spaces for \EO{the solution to the fractional semilinear PDE \eqref{eq:weak_semilinear_pde_integral}.}

\begin{theorem}[H\"older regularity]
\label{thm:regularity_space_state_equation_integral_Holder}
Let $n \geq 2$ and $s \in (0,1)$. Let $\Omega$ be a bounded Lipschitz domain satisfying an exterior ball condition. \EO{Let $a$ be as in the statement of}
Theorem \ref{thm:stata_equation_well_posedness_integral}. Assume, in addition, that $a = a(x,u)$ is locally Lipschitz in $u$, uniformly for $x \in \Omega$. If $f, a(\cdot,0) \in L^{\infty}(\Omega)$, then $u \in C^s(\mathbb{R}^n)$. \EO{In addition, we have the bound}
\[
\| u \|_{C^s(\mathbb{R}^n)}  \lesssim \| f -a(\cdot,0)  \|_{L^{\infty}(\Omega)},
\]
with a hidden constant that depends on $\Omega$ and $s$.
\end{theorem}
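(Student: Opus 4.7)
The plan is to reduce the semilinear problem to a linear one with a bounded right-hand side and then invoke Proposition \ref{pro:state_regularity_Holder}.

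First, I would invoke Theorem \ref{thm:stata_equation_well_posedness_integral} with any $r>n/(2s)$ (in particular, $r$ can be taken as large as we wish since $f,a(\cdot,0)\in L^{\infty}(\Omega) \subset L^{r}(\Omega)$ as $\Omega$ is bounded). This yields existence of a unique solution $u \in \tilde H^s(\Omega)\cap L^{\infty}(\Omega)$ together with the estimate
\[
\| u \|_{L^{\infty}(\Omega)} \lesssim \| f - a(\cdot,0)\|_{L^{\infty}(\Omega)} =: \mathfrak{m}_{f,a}.
\]
Set $\mathfrak{m} := \mathfrak{m}_{f,a}$ (up to the hidden constant) so that $\| u \|_{L^{\infty}(\Omega)} \leq \mathfrak{m}$.

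Next, I would rewrite \eqref{eq:weak_semilinear_pde_integral} as a \emph{linear} problem: $u$ solves $(-\Delta)^s u = g$ in $\Omega$ with $u=0$ in $\Omega^c$, where $g := f - a(\cdot,u)$. The key observation is that $g \in L^{\infty}(\Omega)$. Indeed, using the assumed local Lipschitz property of $a$ in $u$, uniformly in $x$, with Lipschitz constant $L_{\mathfrak{m}}$ on $[-\mathfrak{m},\mathfrak{m}]$, we have
\[
| a(x,u(x)) | \leq |a(x,0)| + L_{\mathfrak{m}} |u(x)| \leq \| a(\cdot,0)\|_{L^{\infty}(\Omega)} + L_{\mathfrak{m}} \| u \|_{L^{\infty}(\Omega)},
\]
for a.e.~$x\in\Omega$. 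Consequently,
\[
\| g \|_{L^{\infty}(\Omega)} \leq \| f \|_{L^{\infty}(\Omega)} + \| a(\cdot,0)\|_{L^{\infty}(\Omega)} + L_{\mathfrak{m}} \| u \|_{L^{\infty}(\Omega)} \lesssim \| f - a(\cdot,0)\|_{L^{\infty}(\Omega)},
\]
where we have used the $L^{\infty}$-bound on $u$ from the previous step.

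Finally, since $\Omega$ is a bounded Lipschitz domain satisfying the exterior ball condition and $g \in L^{\infty}(\Omega)$, Proposition \ref{pro:state_regularity_Holder} applies to the linear problem with right-hand side $g$, giving $u \in C^s(\mathbb{R}^n)$ and
\[
\| u \|_{C^s(\mathbb{R}^n)} \lesssim \| g \|_{L^{\infty}(\Omega)} \lesssim \| f - a(\cdot,0)\|_{L^{\infty}(\Omega)},
\]
which is the desired bound. There is no genuine obstacle; the only point requiring care is the bootstrapping order: one must first secure $u \in L^{\infty}(\Omega)$ (through the well-posedness theorem) to be allowed to use the local Lipschitz hypothesis on $a$, which is what makes $a(\cdot,u)$ and hence $g$ bounded.
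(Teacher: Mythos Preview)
Your approach is essentially the same as the paper's: use the well-posedness result to obtain $u\in L^{\infty}(\Omega)$, invoke the local Lipschitz hypothesis to show $a(\cdot,u)\in L^{\infty}(\Omega)$, and then apply Proposition~\ref{pro:state_regularity_Holder} to the linear problem with right-hand side $g=f-a(\cdot,u)$.

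One minor slip: the chain $\|g\|_{L^{\infty}(\Omega)}\leq \|f\|_{L^{\infty}(\Omega)}+\|a(\cdot,0)\|_{L^{\infty}(\Omega)}+L_{\mathfrak{m}}\|u\|_{L^{\infty}(\Omega)}\lesssim \|f-a(\cdot,0)\|_{L^{\infty}(\Omega)}$ is not correct as written, since $\|f\|_{L^{\infty}(\Omega)}+\|a(\cdot,0)\|_{L^{\infty}(\Omega)}$ need not be controlled by $\|f-a(\cdot,0)\|_{L^{\infty}(\Omega)}$. The fix is to group differently, as the paper does: $g=(f-a(\cdot,0))-(a(\cdot,u)-a(\cdot,0))$, whence $\|g\|_{L^{\infty}(\Omega)}\leq \|f-a(\cdot,0)\|_{L^{\infty}(\Omega)}+L_{\mathfrak{m}}\|u\|_{L^{\infty}(\Omega)}$, and then the $L^{\infty}$ bound on $u$ closes the estimate.
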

\begin{proof}
\EO{We begin the proof by noticing that, for every $\mathfrak{m}>0$ and $v \in [-\mathfrak{m},\mathfrak{m}]$, we have $|a(x,v)| \leq |a(x,0)| + |a(x,v) - a(x,0)| \leq |a(x,0)| + C_{\mathcal{L}} |v|$ for a.e.~$x \in \Omega$. Here, $C_{\mathcal{L}}$ denotes the Lipschitz constant of $a$. This proves that $a(\cdot,u) \in L^{\infty}(\Omega)$. 
% together with the bound $\| a(\cdot,u) \|_{L^{\infty}(\Omega)} \lesssim \|a(\cdot,0)\|_{L^{\infty}(\Omega)} + \| u\|_{L^{\infty}(\Omega)}$.
}
% Since $f - a(\cdot,u) \in L^{\infty}(\Omega)$, 
The fact that $u \in C^s(\mathbb{R}^n)$ thus follows immediately from Proposition \ref{pro:state_regularity_Holder}. In addition, we have
\[
\| u \|_{C^s(\mathbb{R}^n)} 
%\lesssim 
%\| f - a(\cdot,u)  \|_{L^{\infty}(\Omega)}
\lesssim
\| f - a(\cdot,0) \|_{L^{\infty}(\Omega)}
+
\| u  \|_{L^{\infty}(\Omega)}
\lesssim
\| f - a(\cdot,0)  \|_{L^{\infty}(\Omega)}.
\]
To obtain the \EO{first estimate we have utilized again the fact that $a = a(x,u)$ is locally Lipschitz in $u$, uniformly for $x \in \Omega$. The second estimate follows directly from Theorem \ref{thm:stata_equation_well_posedness_integral}: $\| u  \|_{L^{\infty}(\Omega)} \lesssim \| f - a(\cdot,0)  \|_{L^{\infty}(\Omega)}$.} This concludes the proof.
\end{proof}

In view of the regularity requirements on the forcing term $\mathsf{f}$ stated in Proposition \ref{pro:state_regularity_Lipschitz_old}, the following remark, which provides necessary and sufficient conditions for the boundedness of a Nemitskii operator in H\"older spaces, is of particular importance.
\begin{remark}[The Nemitskii operator in H\"older spaces]
\rm
Let $g$ be a real-valued function defined on $\Omega \times \mathbb{R}$. We introduce the Nemitskii operator induced by $g$ as follows: $G(x)(u):= g(x,u(x))$ with $x \in \Omega$ and $u$ varying in a suitable space of real-valued functions defined on $\Omega$. $G$ maps $C^{0,\varrho}(\bar \Omega)$, with $\varrho \in (0,1]$, into itself if $g$ satisfies the following condition: For every $\mathfrak{m}>0$, there exists $\mathfrak{M} = \mathfrak{M}(\mathfrak{m})>0$ such that
 \begin{equation}
 \label{eq:Nemitskii}
  |g(x,u) - g(y,v)| \leq \mathfrak{M} \left\{ |x-y|^{\varrho} + \mathfrak{m}^{-1} |u-v| \right\}
 \end{equation}
for all $x,y \in \bar \Omega$  and for all $u,v \in \mathbb{R}$ such that $|u|, |v| \leq \mathfrak{m}$. In other words, we demand that $g = g(x,u)$ be H\"older continuous in $x$, uniformly for $u$ in bounded intervals of $\mathbb{R}$, and locally Lipschitz in $u$, uniformly for $x \in \bar \Omega$. This condition is also \emph{necessary} 
% when $\Omega = (a,b)$ is a bounded interval of $\mathbb{R}$ \EO{\cite{MR0402565}} and 
when $\Omega$ is a general open and bounded set of $\mathbb{R}^n$ \cite[Theorem 1.1]{MR1325579}.
 \label{rk:Nemitskii} 
\end{remark}

\begin{theorem}[Sobolev regularity]
\label{thm:regularity_space_state_equation_integral_Sobolev}
Let $s \in [\tfrac{1}{4},1)$ and $n \geq 2$. Let $\Omega$ be a bounded Lipschitz domain such that it satisfies an exterior ball condition for $s < \tfrac{1}{2}$. Let $a$ be as in the statement of Theorem \ref{thm:stata_equation_well_posedness_integral}. Assume, in addition, that $a=a(x,u)$ is locally Lipschitz in $u$, uniformly for $x \in \Omega$, $a(\cdot,0) \in L^{2}(\Omega)$ for $s \in [\frac{1}{2},1)$, $a(\cdot,0) \in L^{\infty}(\Omega)$ for $s \in [\tfrac{1}{4},\frac{1}{2})$, and that $a$ satisfies \eqref{eq:Nemitskii} with 
\begin{equation}
\varrho = \tfrac{1}{2}-s~\mathrm{ for }~s \in [\tfrac{1}{4},\tfrac{1}{2}).
\label{eq:zeta}
\end{equation}
If $f \in L^r(\Omega)$, for $r>n/2s$ and, in addition,
\begin{equation}
\label{eq:f}
f \in C^{\frac{1}{2}-s}(\bar \Omega)~\mathrm{ for }~s \in [\tfrac{1}{4},\tfrac{1}{2}),
\qquad
f \in L^{2}(\Omega)~\mathrm{ for }~s \in [ \tfrac{1}{2},1)
\end{equation}
then, we have that $u \in H^{s+\frac{1}{2}-\epsilon}(\Omega)$ for every 
$\epsilon \in (0,\epsilon_{\star})$; the precise value of $\epsilon_{\star}$ is described in estimates \eqref{eq:reg_u_s_121}, \eqref{eq:reg_u_s_12}, and \eqref{eq:reg_u_s_141}.
\end{theorem}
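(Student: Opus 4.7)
The overall strategy is a \emph{bootstrap via linearization}: rewrite \eqref{eq:weak_semilinear_pde_integral} as $(-\Delta)^s u = \tilde f$ in $\Omega$, $u=0$ in $\Omega^c$, where $\tilde f := f - a(\cdot,u)$, and then apply the linear regularity results of Propositions \ref{pro:state_regularity_Lipschitz_new} and \ref{pro:state_regularity_Lipschitz_old}. The work is entirely in checking that $\tilde f$ lies in the right target space in each regime of $s$. Theorem \ref{thm:stata_equation_well_posedness_integral} already delivers $u \in \tilde H^s(\Omega) \cap L^{\infty}(\Omega)$ under the standing hypotheses, and the local Lipschitzness of $a$ in $u$ together with the assumptions on $f$ in \eqref{eq:f} will do the rest. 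I split the argument into $s \in [\tfrac12,1)$ and $s \in [\tfrac14,\tfrac12)$.

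For $s \in [\tfrac12,1)$ I would show that $\tilde f \in L^2(\Omega)$. Local Lipschitzness of $a = a(x,u)$ in $u$, uniformly in $x$, gives $|a(x,u(x))| \leq |a(x,0)| + C_{\mathcal{L}} \|u\|_{L^{\infty}(\Omega)}$ for a.e.\ $x \in \Omega$; since $a(\cdot,0) \in L^2(\Omega)$ and $\Omega$ is bounded, $a(\cdot,u) \in L^2(\Omega)$, and combining this with $f \in L^2(\Omega)$ from \eqref{eq:f} yields $\tilde f \in L^2(\Omega)$. Proposition \ref{pro:state_regularity_Lipschitz_new} then provides $u \in H^{s+\frac12-\epsilon}(\Omega)$ for every $\epsilon \in (0,s+\tfrac12)$ when $s \in (\tfrac12,1)$; the case $s=\tfrac12$ is recovered from the first estimate of Proposition \ref{pro:state_regularity_Lipschitz_new} (which reads $u \in H^{2s-2\epsilon}(\Omega) = H^{s+\frac12-2\epsilon}(\Omega)$) after the harmless relabeling $\epsilon \mapsto \epsilon/2$.

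For $s \in [\tfrac14,\tfrac12)$ the goal is to upgrade $\tilde f$ to $C^{\frac12-s}(\bar\Omega)$. I would first invoke Theorem \ref{thm:regularity_space_state_equation_integral_Holder} to obtain $u \in C^s(\mathbb{R}^n)$; its hypotheses are met here because $a(\cdot,0) \in L^{\infty}(\Omega)$ by assumption and $f \in C^{\frac12-s}(\bar\Omega) \hookrightarrow L^{\infty}(\Omega)$ by \eqref{eq:f}. The Nemitskii condition \eqref{eq:Nemitskii} with $\varrho = \tfrac12 - s$ from \eqref{eq:zeta}, interpreted through Remark \ref{rk:Nemitskii}, guarantees that $u \mapsto a(\cdot,u)$ maps $C^{0,\frac12-s}(\bar\Omega)$ into itself. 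The restriction $s \geq \tfrac14$ enters precisely here: it ensures $s \geq \tfrac12 - s$, so $C^s(\bar\Omega) \hookrightarrow C^{\frac12-s}(\bar\Omega)$, and consequently $a(\cdot,u) \in C^{\frac12-s}(\bar\Omega)$. Adding $f \in C^{\frac12-s}(\bar\Omega)$ gives $\tilde f \in C^{\frac12-s}(\bar\Omega)$, and Proposition \ref{pro:state_regularity_Lipschitz_old} delivers $u \in H^{s+\frac12-\epsilon}(\Omega)$ for every $\epsilon \in (0,s+\tfrac12)$.

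The only subtle step is the last one: identifying that the exponent $\varrho = \tfrac12-s$ in \eqref{eq:zeta} is exactly what pairs the H\"older--Nemitskii mapping property of $a$ with the source-regularity demand of Proposition \ref{pro:state_regularity_Lipschitz_old}, and then recognizing that $s \geq \tfrac14$ is the minimal threshold under which the a priori bound $u \in C^s$ from Theorem \ref{thm:regularity_space_state_equation_integral_Holder} is already strong enough to feed back into this Nemitskii mapping. Everything else, including the explicit values of $\epsilon_\star$ appearing in \eqref{eq:reg_u_s_121}, \eqref{eq:reg_u_s_12}, \eqref{eq:reg_u_s_141}, is obtained by reading off the corresponding range of $\epsilon$ from the linear propositions and tracking constants via \eqref{eq:stability_integral} and the Lipschitz constant of $a$.
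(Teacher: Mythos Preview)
Your proposal is correct and follows essentially the same bootstrap-via-linearization route as the paper: split into the regimes $s\in(\tfrac12,1)$, $s=\tfrac12$, and $s\in[\tfrac14,\tfrac12)$, push $\tilde f=f-a(\cdot,u)$ into $L^2(\Omega)$ (respectively $C^{\frac12-s}(\bar\Omega)$) via the local Lipschitz/Nemitskii hypotheses, and then invoke Propositions \ref{pro:state_regularity_Lipschitz_new} and \ref{pro:state_regularity_Lipschitz_old}. Your explicit remark that $s\geq\tfrac14$ is exactly the threshold making $C^s(\bar\Omega)\hookrightarrow C^{\frac12-s}(\bar\Omega)$ is a helpful clarification that the paper leaves implicit.
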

\begin{proof}
We consider three cases.

\emph{Case} 1: $s \in (\tfrac{1}{2},1)$. \EO{Since $a=a(x,u)$ is locally Lipschitz in $u$, uniformly for $x \in \Omega$, and $a(\cdot,0) \in L^{2}(\Omega)$, we deduce that $a(\cdot,u) \in L^2(\Omega)$ and thus that} $f - a(\cdot,u) \in L^2(\Omega)$. We can thus apply the regularity results of Proposition \ref{pro:state_regularity_Lipschitz_new} to arrive at
\begin{equation}
 \begin{aligned}
\| u \|_{H^{s+\frac12-\epsilon}(\Omega)} 
& \lesssim
\epsilon^{-\frac{1}{2}}
\left( 
\| f - a(\cdot,0) \|_{L^2(\Omega)} + \| u \|_{L^2(\Omega)}
\right)
\\
& \lesssim
\epsilon^{-\frac{1}{2}}
% \left( 
\| f - a(\cdot,0) \|_{L^2(\Omega)} 
% + \| f - a(\cdot,0) \|_{L^r(\Omega)}
% \right)
\quad
\forall 0<\epsilon<s+\tfrac{1}{2},
\quad
s \in (\tfrac{1}{2},1),
\label{eq:reg_u_s_121}
\end{aligned}
\end{equation}
upon utilizing again that $a=a(x,u)$ is locally Lipschitz in $u$, uniformly for $x \in \Omega$, and the stability estimate $\| u \|_{H^s(\mathbb{R}^n)} \lesssim \| f - a(\cdot,0) \|_{L^2(\Omega)}$, which follows from Theorem \ref{thm:stata_equation_well_posedness_integral}. In both estimates the hidden constant is independent of $\epsilon$.
%\EO{We begin the proof by noticing that, as a consequence of the results of Theorem \ref{thm:regularity_space_state_equation_integral_Holder}, $u \in C^s(\mathbb{R}^n)$. Now, since $a$ satisfies \eqref{eq:Nemitskii} with $\varrho = \beta$, for some $\beta >0$, a straightforward computation reveals that $a(\cdot,u)$ belongs to $C^{\gamma}(\bar \Omega)$, where $\gamma = \min \{ \beta, s \}$. Consequently, we deduce that $f - a(\cdot,u) \in C^{\gamma}(\bar \Omega)$. We are thus in position to apply the results for the linear case of Proposition \ref{pro:state_regularity_Lipschitz} to derive that, for every $\epsilon >0$, $u \in H^{s+1/2-\epsilon}(\Omega)$. In addition, we have the estimates}
%\begin{equation}
%\begin{aligned}
%\| u \|_{H^{s+\frac12-\epsilon}(\Omega)} 
%\lesssim 
%\epsilon^{-1} 
%\left(   
%\|f \|_{C^{\beta}(\bar \Omega)}
%+
%\| a(\cdot, u) \|_{C^{\gamma}(\bar \Omega)}
%\right)
%\\
%\lesssim
%\epsilon^{-1} 
%\left(   
%1
%+
%\|f \|_{C^{\beta}(\bar \Omega)}
%+
%\| u \|_{C^{\gamma}(\bar \Omega)}
%\right),
%\end{aligned}
%\label{eq:reg_u_s_121}
%\end{equation}
%\EO{with a hidden constant that is independent of $\epsilon$.}

\emph{Case} 2: $s = \tfrac{1}{2}$. \EO{In this case, we have that $u \in H^{s+\frac{1}{2}-2\epsilon}(\Omega)$. This follows from the arguments  elaborated in the previous case and Proposition \ref{pro:state_regularity_Lipschitz_new}. In addition, we have}
% immediately reveals that
% Similar arguments to the ones elaborated in the previous case reveal that
% 
% Since $f - a(\cdot,0) \in L^{2}(\Omega)$ and $a=a(x,u)$ is locally Lipschitz in $u$, uniformly for $x \in \Omega$, an application of Proposition \ref{pro:state_regularity_Lipschitz_new} imply that, for every $\epsilon \in (0,s)$, 
% the solution $u$ belongs to $H^{s+1/2-2\epsilon}(\Omega)$. 
% In addition, 
% we have the estimate
\begin{equation}
\| u \|_{H^{s+\frac12-2\epsilon}(\Omega)} 
\lesssim 
\epsilon^{-\frac{1}{2} - \EO{\zeta}}  
\| f - a(\cdot,0) \|_{L^{2}(\Omega)}
% \\
% +
% \| f - a(\cdot,0) \|_{L^{r}(\Omega)}
\quad
\forall 0<2\epsilon<s + \tfrac{1}{2},
\quad
s = \tfrac{1}{2}.
\label{eq:reg_u_s_12}
\end{equation}
The hidden constant is independent of $\epsilon$.

\emph{Case} 3: $s \in [\tfrac{1}{4},\tfrac{1}{2})$. \EO{Within this setting,} Theorem \ref{thm:regularity_space_state_equation_integral_Holder} guarantees that $u \in C^s(\mathbb{R}^n)$. We can thus invoke the fact that $a$ satisfies \eqref{eq:Nemitskii} with $\varrho = \tfrac{1}{2} - s$ to arrive at $f - a(\cdot,u) \in C^{\varrho}(\bar \Omega)$ (see Remark \ref{rk:Nemitskii}).
% observe that $\tfrac{1}{2} - s \leq s$. 
We are thus in position to invoke the results of Proposition \ref{pro:state_regularity_Lipschitz_old} to deduce that $u \in H^{s+1/2-\epsilon}(\Omega)$ together with the bounds
\begin{equation}
\begin{aligned}
\| u \|_{H^{s+\frac{1}{2}-\epsilon}(\Omega)} 
& \lesssim 
\epsilon^{-1} 
\left[     
\|f \|_{C^{\frac12-s}(\bar \Omega)}
+
\| a(\cdot, u) \|_{C^{\frac12-s}(\bar \Omega)}
\right]
\\
& \lesssim 
\epsilon^{-1} 
\left[   
1
+
\|f \|_{C^{\frac12-s}(\bar \Omega)}
+
\| u \|_{C^{s}(\bar \Omega)}
\right]
\,\,
\forall \epsilon \in (0, s +\tfrac{1}{2}),
\,
s \in [\tfrac{1}{4},\tfrac{1}{2}),
\end{aligned}
\label{eq:reg_u_s_141}
\end{equation}
with a hidden constant that is independent of $\epsilon$.
\end{proof}

We now present a regularity result for $s \in (0,\frac{1}{2})$. When $s\in [\tfrac{1}{4},\tfrac{1}{2})$ the result is weaker than the one obtained in Theorem \ref{thm:regularity_space_state_equation_integral_Sobolev}. Nevertheless, it holds under weaker assumptions on the forcing term $f$, the nonlinear function $a$, \EO{and the domain $\Omega$.}

\begin{theorem}[Sobolev regularity]
\label{thm:regularity_space_state_equation_integral_SobolevII}
Let $s \in (0,\tfrac{1}{2})$ and $n \geq 2$. Let $\Omega$ be a bounded Lipschitz domain. Let $a$ be as in the statement of Theorem \ref{thm:stata_equation_well_posedness_integral}. Assume, in addition, that $a=a(x,u)$ is locally Lipschitz in $u$, uniformly for $x \in \Omega$, and $a(\cdot,0) \in L^{2}(\Omega)$. If $f \in L^r(\Omega) \cap L^2(\Omega)$, for $r>n/2s$, then $u \in H^{2s-2\epsilon}(\Omega)$ for every $\epsilon \in (0,s)$ and 
\[
 \| u \|_{H^{2s-2\epsilon}(\Omega)}
\lesssim
 \epsilon^{-\frac{1}{2} - \zeta}  
\| f - a(\cdot,0) \|_{L^{2}(\Omega)},
\quad
s \in (0,\tfrac{1}{2}),
\quad
\forall 0<\epsilon<s,
\]
where $\zeta$ is as in the statement of Proposition \ref{pro:state_regularity_Lipschitz_new}.
\end{theorem}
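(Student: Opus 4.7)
The plan is to mimic the argument from Case 1 of the proof of Theorem \ref{thm:regularity_space_state_equation_integral_Sobolev}, adapted to the lower range $s \in (0, \tfrac{1}{2})$ by invoking the first (rather than the second) estimate of Proposition \ref{pro:state_regularity_Lipschitz_new}. Since that proposition applies on arbitrary bounded Lipschitz domains with merely $L^2$ data, the argument will require neither an exterior ball condition nor any H\"older regularity of $f$---this is the trade-off that makes the conclusion weaker than $H^{s+\frac{1}{2}-\epsilon}$ when $s \in [\tfrac{1}{4},\tfrac{1}{2})$.

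First I would note that the standing hypotheses on $a$ and $f \in L^r(\Omega)$, $r > n/2s$, imply those of Theorem \ref{thm:stata_equation_well_posedness_integral}, which produces a unique solution $u \in \tilde H^s(\Omega) \cap L^\infty(\Omega)$ together with the $L^\infty$ bound \eqref{eq:stability_integral}. Next, using that $a=a(x,u)$ is locally Lipschitz in $u$ uniformly for $x \in \Omega$, together with $a(\cdot,0) \in L^2(\Omega)$ and $u \in L^\infty(\Omega)$, I would deduce the pointwise bound
\[
|a(x,u(x))| \leq |a(x,0)| + C_{\mathcal{L}} \|u\|_{L^\infty(\Omega)} \qquad \text{for a.e.\ } x \in \Omega,
\]
where $C_{\mathcal{L}}$ denotes the Lipschitz constant of $a$ on $[-\|u\|_{L^\infty(\Omega)}, \|u\|_{L^\infty(\Omega)}]$. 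This yields $a(\cdot,u) \in L^2(\Omega)$, and therefore $f - a(\cdot,u) \in L^2(\Omega)$.

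I would then view $u \in \tilde H^s(\Omega)$ as the solution of the \emph{linear} problem $(-\Delta)^s u = f - a(\cdot,u)$ in $\Omega$, $u = 0$ in $\Omega^c$, with an $L^2$ right-hand side, and apply the first estimate in Proposition \ref{pro:state_regularity_Lipschitz_new} to obtain $u \in H^{2s-2\epsilon}(\Omega)$ for every $\epsilon \in (0,s)$ together with
\[
\| u \|_{H^{2s - 2\epsilon}(\Omega)} \lesssim \epsilon^{-\frac{1}{2} - \zeta} \| f - a(\cdot,u) \|_{L^2(\Omega)}.
\]
It then remains to absorb the nonlinear term. Using local Lipschitzness of $a$ once more, $\| f - a(\cdot,u)\|_{L^2(\Omega)} \lesssim \|f - a(\cdot,0)\|_{L^2(\Omega)} + \|u\|_{L^2(\Omega)}$, so the proof reduces to an $L^2$ stability bound $\|u\|_{L^2(\Omega)} \lesssim \|f - a(\cdot,0)\|_{L^2(\Omega)}$. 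This I would obtain by testing \eqref{eq:weak_semilinear_pde_integral} with $v = u$, subtracting $\langle a(\cdot,0), u \rangle$ from both sides, exploiting the monotonicity assumption to discard $\langle a(\cdot,u) - a(\cdot,0), u\rangle \geq 0$, and concluding with Cauchy--Schwarz together with the Sobolev embedding $\tilde H^s(\Omega) \hookrightarrow L^2(\Omega)$ of Proposition \ref{pro:Sobolev_embedding}.

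I do not anticipate any genuine obstacle: the proof is essentially a one-line reduction to the linear regularity theorem, exactly parallel to Case 1 of Theorem \ref{thm:regularity_space_state_equation_integral_Sobolev}. The only item to track carefully is the $\epsilon^{-\frac{1}{2}-\zeta}$ factor, which is inherited verbatim from Proposition \ref{pro:state_regularity_Lipschitz_new} and cannot be improved by the present argument; equally important is noting that, in contrast to Theorem \ref{thm:regularity_space_state_equation_integral_Sobolev}, neither the H\"older assumption \eqref{eq:Nemitskii} on $a$ nor the H\"older regularity \eqref{eq:f} on $f$ is invoked here, which is what allows the hypotheses to be weakened.
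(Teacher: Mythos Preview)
Your proposal is correct and follows essentially the same approach as the paper, which simply states that the proof ``follows similar arguments to those elaborated in the proof of Theorem \ref{thm:regularity_space_state_equation_integral_Sobolev}'' and omits the details. Your write-up is in fact more explicit than the paper's, carefully spelling out the reduction to the linear estimate of Proposition \ref{pro:state_regularity_Lipschitz_new} and the energy-type stability bound $\|u\|_{L^2(\Omega)} \lesssim \|f - a(\cdot,0)\|_{L^2(\Omega)}$ that closes the argument.
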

\begin{proof}
The proof follows similar arguments to those elaborated in the proof of Theorem \ref{thm:regularity_space_state_equation_integral_Sobolev}. For brevity, we skip the details.
\end{proof}

\section{Fractional semilinear PDE-constrained optimization}
\label{sec:optimal_control_problem_integral}

\EO{We consider the} following weak version of the \emph{fractional semilinear optimal control problem}: Find
\begin{equation}\label{eq:min_integral}
\min \{ J(u,z): (u,z) \in \tilde H^s(\Omega) \times \mathbb{Z}_{ad}\}
\end{equation}
subject to the \emph{fractional, semilinear,} and \emph{elliptic} state equation
\begin{equation}\label{eq:weak_st_eq_integral}
\mathcal{A}( u, v)+(a(\cdot,u),v)_{L^2(\Omega)}=(z,v)_{L^2(\Omega)} \quad \forall v \in \tilde H^s(\Omega).
\end{equation}
\EO{Here, $\mathbb{Z}_{ad}:=\{ v \in L^2(\Omega):  \mathfrak{a} \leq v(x) \leq \mathfrak{b}~\text{a.e.}~x \in \Omega \}$ and $\mathfrak{a},\mathfrak{b} \in \mathbb{R}$ are such that $\mathfrak{a} < \mathfrak{b}$.}

In \EO{view of the assumptions on $a$, Theorem \ref{thm:stata_equation_well_posedness_integral} guarantees} that \eqref{eq:weak_st_eq_integral} admits a unique solution $u \in \tilde H^s(\Omega) \cap L^{\infty}(\Omega)$. We thus introduce the control to state map $\mathcal{S}: L^{r}(\Omega) \rightarrow \tilde H^s(\Omega) \cap L^{\infty}(\Omega)$ which, given a control $z$, associates to it the unique state $u$ that solves \eqref{eq:weak_st_eq_integral}.  We also introduce 
% the reduced cost functional 
$j:\mathbb{Z}_{ad} \rightarrow \mathbb{R}$ by the relation $j(z)=J(\mathcal{S}z,z)$.

%%%%%%%%%%%%%%%%%%%%%%%%%%%%%%%%%%%%%%%%%%%%%%%%%%%%%%%%%%%%%%%%%%%%%%%%%%%%%
%%%%%%%%%%%%%%%%%%%%%%%%%%%%%%%%%%%%%%%%%%%%%%%%%%%%%%%%%%%%%%%%%%%%%%%%%%%%%
% \subsection{Existence of optimal controls}
The \EO{existence of an optimal solution $\bar{z} \in \mathbb{Z}_{ad}$ follows from \cite[Theorem 4.1]{MR4358465}.}

% \begin{theorem}[existence of an optimal pair]\label{thm:existence_control}
% Let $n \geq 2$, $s \in (0,1)$, and $r>n/2s$. Let $a = a(x,u) : \Omega \times \mathbb{R} \rightarrow \mathbb{R}$ be a Carath\'eodory function that is monotone increasing in $u$. Let $L = L(x,u) : \Omega \times \mathbb{R} \rightarrow \mathbb{R}$ be a Carath\'eodory function. Assume that, for every $\mathfrak{m}>0$, there exist $\varphi_{\mathfrak{m}} \in L^{r}(\Omega)$, with $r>n/2s$, and $\psi_{\mathfrak{m}} \in L^1(\Omega)$ such that
% \begin{equation}
% |a(x,u)| \leq \varphi_{\mathfrak{m}}(x), 
% \quad |L(x,u)| \leq \psi_{\mathfrak{m}}(x), 
% \quad 
% a.e.~x \in \Omega,~ u \in [-\mathfrak{m},\mathfrak{m}].
% \label{eq:aux_for_existence_integral}
% \end{equation}
% Thus, \eqref{eq:min_integral}--\eqref{eq:weak_st_eq_integral} admits at least one solution $(\bar{u},\bar{z}) \in \tilde H^s(\Omega) \cap L^{\infty}(\Omega) \times \mathbb{Z}_{ad}$.
% \end{theorem}
% \begin{proof}
% See \cite[Theorem 4.1]{MR4358465}.
% \end{proof}
% 
% \begin{remark}[assumptions on $a$]
% \rm
% To obtain the result of Theorem \ref{thm:existence_control} we have assumed \eqref{eq:aux_for_existence_integral}. Observe that \eqref{eq:assumption_on_phi_state_equation} can be guaranteed because $n/2s>2n/(n+2s)$.
% \end{remark}
% 
%%%%%%%%%%%%%%%%%%%%%%%%%%%%%%%%%%%%%%%%%%%%%%%%%%%%%%%%%%%%%%%%%%%%%%%%%%%%%
%%%%%%%%%%%%%%%%%%%%%%%%%%%%%%%%%%%%%%%%%%%%%%%%%%%%%%%%%%%%%%%%%%%%%%%%%%%%%
\subsection{First order necessary optimality conditions}\label{sec:1st_order}
\EO{In this section, we state} first order necessary optimality conditions for \eqref{eq:min_integral}--\eqref{eq:weak_st_eq_integral}. We must immediately mention that, since \eqref{eq:min_integral}--\eqref{eq:weak_st_eq_integral} is not convex, we distinguish between local and global solutions and present optimality conditions in the context of local solutions in \EO{$L^2(\Omega)$.}

To \EO{formulate first order optimality conditions, 
we introduce the adjoint state $p \in \tilde H^s(\Omega) \cap L^{\infty}(\Omega)$ as the solution to the \emph{adjoint equation}}
% which corresponds to the following \emph{fractional}, \emph{linear}, and \emph{elliptic} PDE:
\begin{equation}\label{eq:adj_eq_integral}
% p \in \tilde H^s(\Omega) \cap L^{\infty}(\Omega):
% \quad
\mathcal{A}(v,p) + \left(\frac{\partial a}{\partial u}(\cdot,u)p,v\right)_{L^2(\Omega)} = \left(\frac{\partial L}{\partial u}(\cdot,u),v\right)_{L^2(\Omega)}
\quad \forall v \in \tilde H^s(\Omega).
\end{equation}
\EO{The well-posedness of the adjoint problem \eqref{eq:adj_eq_integral} follows from assumptions \textnormal{\ref{A2}} and \textnormal{\ref{B2}}, which guarantee that $\partial a/\partial u (x,u) \geq 0$, for a.e.~$x \in \Omega$ and for all $u \in \mathbb{R}$, and that $\partial L/ \partial u(\cdot,u) \in L^r(\Omega)$, for some $r>n/2s$.}

First \EO{order optimality conditions for our optimal control problem read as follows: If $\bar z \in  \mathbb{Z}_{ad}$ denotes a locally optimal control for \eqref{eq:min_integral}--\eqref{eq:weak_st_eq_integral}, then \cite[Theorem 4.4]{MR4358465}}
\begin{equation}\label{eq:var_ineq_integral}
\left( \bar p +\alpha \bar{z},z-\bar{z}\right)_{L^2(\Omega)}\geq 0 \quad \forall z\in \mathbb{Z}_{ad},
\end{equation}
where $\bar p \in \tilde H^s(\Omega) \cap L^{\infty}(\Omega)$ denotes the solution to \eqref{eq:adj_eq_integral} with $u$ replaced by $\bar{u} = \mathcal{S} \bar z$.

Define \EO{$\Pi_{[\mathfrak{a},\mathfrak{b}]} : L^1(\Omega) \rightarrow  \mathbb{Z}_{ad}$ by $\Pi_{[\mathfrak{a},\mathfrak{b}]}(v) := \min\{ \mathfrak{b}, \max\{ v, \mathfrak{a} \} \}$ a.e.~in~$\Omega$.}
The following projection formula is of fundamental importance to study regularity estimates. If $\bar{z} \in \mathbb{Z}_{ad}$ denotes a locally optimal control for \eqref{eq:min_integral}--\eqref{eq:weak_st_eq_integral}, then
 \cite[page 217]{MR2583281}
\begin{equation}\label{eq:projection_control} 
\bar{z}(x):=\Pi_{[\mathfrak{a},\mathfrak{b}]}(-\alpha^{-1}\bar{p}(x)) \textrm{ a.e.}~x \in \Omega.
\end{equation}
Since $\bar p \in \tilde H^s(\Omega) \cap L^{\infty}(\Omega)$ and $s \in (0,1)$, it is immediate that $\bar z \in H^{s}(\Omega) \cap L^{\infty}(\Omega)$.

%%%%%%%%%%%%%%%%%%%%%%%%%%%%%%%%%%%%%%%%%%%%%%%%%%%%%%%%%%%%%
%%%%%%%%%%%%%%%%%%%%%%%%%%%%%%%%%%%%%%%%%%%%%%%%%%%%%%%%%%%%%
\subsection{Second order optimality conditions}\label{sec:2nd_order}
In \eqref{eq:var_ineq_integral} we stated first order \emph{necessary} optimality conditions. Since our optimal control problem is not convex, \emph{sufficiency} requires the use of second order optimality conditions. \EO{To elaborate on these conditions, we introduce some preliminary concepts. Let $\bar{z} \in \mathbb{Z}_{ad}$ satisfies \eqref{eq:var_ineq_integral}.  We define $\bar{\mathfrak{p}} :=  \bar p + \alpha \bar z$ 
% and observe that \eqref{eq:var_ineq_integral} yields, for a.e.~$x \in \Omega$,
% \begin{equation}
%   \label{eq:derivative_j}
%   \bar{\mathfrak{p}}(x) = 0 \text{ if } \mathfrak{a}< \bar{z}(x) < \mathfrak{b},
%   \qquad
%   \bar{\mathfrak{p}}(x) \geq 0 \text{ if } \bar{z}(x) = \mathfrak{a},
%   \qquad
%   \bar{\mathfrak{p}}(x) \leq 0 \text{ if }  \bar{z}(x) = \mathfrak{b}.
%  \end{equation}
% We also define 
and the \emph{cone of critical directions}:}
\begin{equation}
C_{\bar{z}}:=\{v\in L^2(\Omega): \eqref{eq:sign_cond} \text{ holds and } \bar{\mathfrak{p}}(x) \neq 0 \implies v(x) = 0 \},
\label{eq:Cz}
\end{equation}
where condition \eqref{eq:sign_cond} reads as follows:
\begin{equation}
\label{eq:sign_cond}
v(x)
\geq 0  \text{ a.e.}~x\in\Omega \text{ if } \bar{z}(x)=\mathfrak{a},
\qquad
v(x) \leq 0 \text{ a.e.}~x\in\Omega \text{ if } \bar{z}(x)=\mathfrak{b}.
\end{equation}
 
Second \EO{order \emph{necessary} optimality conditions for problem \eqref{eq:min_integral}--\eqref{eq:weak_st_eq_integral} can be found in \cite[Theorem 4.6]{MR4358465}. Reference \cite{MR4358465} also provide sufficient second order conditions, which hold under the extra assumptions that $n\in\{2,3\}$ and $s>n/4$. By exploiting the fact that $\mathbb{Z}_{ad} \subset L^{\infty}(\Omega)$, we remove these assumptions and improve upon \cite{MR4358465}.}

\begin{theorem}[second order sufficient optimality conditions]
\label{thm:suff_opt_cond}
Let \EO{$n \geq 2$ and $s \in (0,1)$. Let $\bar u \in \tilde H^s(\Omega)$, $\bar p \in \tilde H^s(\Omega)$, and $\bar{z} \in \mathbb{Z}_{ad}$ satisfy \eqref{eq:weak_st_eq_integral}, \eqref{eq:adj_eq_integral}, and \eqref{eq:var_ineq_integral}. If
$
j''(\bar{z})v^2 > 0 
$
for all $v\in C_{\bar{z}} \setminus \{ 0 \}$, then there exists $\kappa > 0$ and $\EO{\delta} >0$ such that
\begin{equation}
\label{eq:quadratic_growing}
j( z) \geq j(\bar z) + \tfrac{\kappa}{2} \| z - \bar  z\|_{L^2(\Omega)}^2
\end{equation}
for every $z \in \mathbb{Z}_{ad}$ such that $\| \bar z - z \|_{L^2(\Omega)} \leq \EO{\delta}$.}
\end{theorem}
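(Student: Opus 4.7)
The plan is to argue by contradiction in the spirit of standard second order sufficiency proofs for PDE-constrained optimization. Suppose \eqref{eq:quadratic_growing} fails for every $\kappa,\delta>0$. Then one may extract a sequence $\{z_k\}\subset \mathbb{Z}_{ad}$ with $z_k\neq\bar z$, $z_k\to\bar z$ in $L^2(\Omega)$, and $j(z_k) < j(\bar z) + (2k)^{-1}\|z_k-\bar z\|_{L^2(\Omega)}^2$. Setting $\rho_k := \|z_k-\bar z\|_{L^2(\Omega)}$ and $v_k := \rho_k^{-1}(z_k-\bar z)$, the unit-norm sequence $\{v_k\}$ admits a subsequence with $v_k \rightharpoonup v$ weakly in $L^2(\Omega)$.

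The first step is to prove $v\in C_{\bar z}$. The sign constraints \eqref{eq:sign_cond} pass to the limit because they define a weakly closed convex cone in $L^2(\Omega)$. For the orthogonality, the projection formula \eqref{eq:projection_control} yields $\bar{\mathfrak{p}}(x) v(x) \geq 0$ a.e.\ in $\Omega$: on $\{\bar z = \mathfrak{a}\}$ one has $\bar{\mathfrak{p}} \geq 0$ and $v \geq 0$, on $\{\bar z = \mathfrak{b}\}$ one has $\bar{\mathfrak{p}} \leq 0$ and $v \leq 0$, and $\bar{\mathfrak{p}} \equiv 0$ on the inactive set. Combining \eqref{eq:var_ineq_integral} rewritten as $(\bar{\mathfrak{p}}, z_k - \bar z)_{L^2(\Omega)} \geq 0$ with the second order Taylor expansion
\[
j(z_k) = j(\bar z) + (\bar{\mathfrak{p}}, z_k - \bar z)_{L^2(\Omega)} + \tfrac{1}{2} j''(\hat z_k)(z_k - \bar z)^2,
\]
for some $\hat z_k$ on the segment from $\bar z$ to $z_k$, the contradiction hypothesis and the uniform boundedness of $j''(\hat z_k)$ on $L^2(\Omega)\times L^2(\Omega)$ (discussed in the last paragraph) imply $0 \leq (\bar{\mathfrak{p}}, v_k)_{L^2(\Omega)} \leq C\rho_k \to 0$. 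Weak convergence then forces $(\bar{\mathfrak{p}}, v)_{L^2(\Omega)} = 0$, which combined with the pointwise nonnegativity yields $\bar{\mathfrak{p}} v = 0$ a.e., i.e., $v \in C_{\bar z}$.

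Next, dividing the same Taylor identity by $\rho_k^2$ produces $j''(\hat z_k) v_k^2 \leq 1/k$, hence $\limsup_{k\to\infty} j''(\hat z_k) v_k^2 \leq 0$. Using the representation
\[
j''(z) w^2 = \alpha \|w\|_{L^2(\Omega)}^2 + \int_{\Omega} \left[\tfrac{\partial^2 L}{\partial u^2}(\cdot, u_z) - p_z \tfrac{\partial^2 a}{\partial u^2}(\cdot, u_z)\right] \phi_w^2 \,\mathrm{d}x,
\]
where $\phi_w \in \tilde H^s(\Omega)$ is the linearized state at $z$ with source $w$, the plan is to establish the weak lower semicontinuity $j''(\bar z) v^2 \leq \liminf_{k\to\infty} j''(\hat z_k) v_k^2$. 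When $v \neq 0$, this contradicts the assumption $j''(\bar z) v^2 > 0$ on $C_{\bar z}\setminus\{0\}$. When $v = 0$, the sensitivities $\phi_{v_k}$ converge strongly to zero in $L^2(\Omega)$ by compactness, so the nonlinear integral vanishes and $j''(\hat z_k) v_k^2 \to \alpha \|v_k\|_{L^2(\Omega)}^2 = \alpha > 0$, a contradiction.

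The main obstacle, and the locus of the improvement over \cite{MR4358465}, is securing the uniform bounds and the limit passage without restricting $s$ relative to $n$. Since $\{z_k, \bar z, \hat z_k\} \subset \mathbb{Z}_{ad}$, these controls satisfy a uniform $L^{\infty}(\Omega)$ bound, and Theorem \ref{thm:stata_equation_well_posedness_integral} transfers this to uniform $L^{\infty}(\Omega)$ bounds on the states $u_{\hat z_k}$; applying the same type of estimate to the linear adjoint equation \eqref{eq:adj_eq_integral} yields analogous bounds for $p_{\hat z_k}$. Assumptions \textnormal{\ref{A3}} and \textnormal{\ref{B2}} then guarantee that $\partial^2 L/\partial u^2(\cdot, u_{\hat z_k})$ is uniformly bounded in $L^r(\Omega)$ and that $p_{\hat z_k} \partial^2 a/\partial u^2(\cdot, u_{\hat z_k})$ is uniformly bounded in $L^{\infty}(\Omega)$, with convergence to their counterparts at $\bar z$ following from the continuity of the control-to-state and control-to-adjoint maps. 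Combined with the compact embedding $\tilde H^s(\Omega) \hookrightarrow L^q(\Omega)$ of Proposition \ref{pro:Sobolev_embedding} for an admissible $q \geq 2$, which provides strong $L^2$-convergence $\phi_{v_k} \to \phi_v$, Hölder's inequality (with exponents compatible with $r > n/2s$) and dominated convergence close both the uniform bound and the lower semicontinuity step, independently of whether $s > n/4$ holds.
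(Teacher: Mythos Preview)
Your proposal is correct and follows essentially the same contradiction-and-normalization argument as the paper, which in turn defers to \cite[Theorem~4.7]{MR4358465} and only spells out the single modification needed to drop the restriction $s>n/4$. The one point the paper makes more explicit than you do is \emph{how} the uniform $L^\infty$ bound on $\{\hat z_k\}\subset\mathbb{Z}_{ad}$ is leveraged: $L^2$-convergence together with the uniform $L^\infty$ bound yields convergence in $L^\iota(\Omega)$ for every $\iota\in(2,\infty)$, in particular for some $\iota=r>n/2s$, so that the stability bound \eqref{eq:stability_integral} delivers $u_{\hat z_k}\to\bar u$ in $\tilde H^s(\Omega)\cap L^\infty(\Omega)$ (and then assumptions \ref{A3}, \ref{B2} give the analogous $L^r$-convergence of the adjoint data, hence $p_{\hat z_k}\to\bar p$ in $\tilde H^s(\Omega)\cap L^\infty(\Omega)$); this is precisely the step that replaces the Sobolev-embedding argument requiring $s>n/4$ in \cite{MR4358465}, and it is hidden in your phrase ``continuity of the control-to-state and control-to-adjoint maps.''
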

\begin{proof}
\EO{The proof basically follows the arguments elaborated in the proof of \cite[Theorem 4.7]{MR4358465}. The only argument that needs to be modified is the one that allows the convergence of the sequences $\{ \hat{u}_k\}_{k \in \mathbb{N}}$ and $\{ \hat{p}_k \}_{\mathbb{N}}$ to $\bar{u}$ and $\bar{p}$, respectively, in $\tilde{H}^s(\Omega) \cap L^{\infty}(\Omega)$ as $k \uparrow \infty$. Observe that the convergence $\hat{z}_k \rightarrow \bar{z}$ in $L^2(\Omega)$ as $k \uparrow \infty$ combined with the fact that $\{ \hat{z}_k \}_{k\in \mathbb{N}}$ is uniformly bounded in $L^{\infty}(\Omega)$ allow us to conclude that $\hat{z}_k \rightarrow \bar{z}$ in $L^{\iota}(\Omega)$ as $k \uparrow \infty$ for every $\iota \in (2,\infty)$. With such a convergence result at hand, we rewrite the problem that $\bar{u} - \hat{u}_k$ solves as a linear problem \cite[Theorem 4.16]{MR2583281}, invoke the assumptions on $a$, and utilize the stability bound \eqref{eq:stability_integral} to deduce that $\hat{u}_k \rightarrow \bar{u}$ in $\tilde{H}^s(\Omega) \cap L^{\infty}(\Omega)$ as $k \uparrow \infty$ without further assumptions on $n$ and $s$. Let us now observe that assumptions \ref{A3} and \ref{B2} guarantee that
\[
 \left\| \frac{\partial a}{\partial u}(\cdot,\bar{u}) -\frac{\partial a}{\partial u}(\cdot,\hat{u}_k) \right\|_{L^r(\Omega)}
 +
 \,\,\,
 \left\| \frac{\partial L}{\partial u}(\cdot,\bar{u}) -\frac{\partial L}{\partial u}(\cdot,\hat{u}_k) \right\|_{L^r(\Omega)}
 \rightarrow 0,
 \qquad
 k \uparrow \infty.
\]
Consequently, $\hat{p}_k \rightarrow \bar{p}$ in $\tilde{H}^s(\Omega) \cap L^{\infty}(\Omega)$ as $k \uparrow \infty$. This concludes the proof.
}
\end{proof}
% To present the following result, we 
% Define, for $\tau >0$,
% $
% C_{\bar{z}}^\tau:=\{v\in L^2(\Omega): \eqref{eq:sign_cond} \textnormal{ holds and } |\bar{\mathfrak{p}}(x)|>\tau \implies v(x)=0 \}.
% $

The \EO{following result is the starting point to derive error estimates for suitable finite element discretizations of the optimal control problem \eqref{eq:min_integral}--\eqref{eq:weak_st_eq_integral}.}

\begin{theorem}[equivalent optimality conditions]
\label{thm:equivalent_opt_cond}
\EO{Let $n \geq 2$ and $s \in (0,1)$. Let $\bar{u} \in \tilde H^s(\Omega)$, $\bar p \in \tilde H^s(\Omega)$, and $\bar z \in \mathbb{Z}_{ad}$ satisfy the first order optimality conditions \eqref{eq:weak_st_eq_integral}, \eqref{eq:adj_eq_integral}, and \eqref{eq:var_ineq_integral}. Then, the following statements are equivalent:
\begin{equation}\label{eq:second_order_equivalent}
j''(\bar{z})v^2 > 0 \quad \forall v\in C_{\bar{z}} \setminus \{ 0 \} 
\iff
\exists \EO{\nu}, \tau >0: 
\,\, 
j''(\bar{z})v^2 \geq \EO{\nu} \|v\|_{L^2(\Omega)}^2 
\quad \forall v \in C_{\bar{z}}^\tau,
\end{equation}
where
$
C_{\bar{z}}^\tau:=\{v\in L^2(\Omega): \eqref{eq:sign_cond} \textnormal{ holds and } |\bar{\mathfrak{p}}(x)|>\tau \implies v(x)=0 \}.
$}
\end{theorem}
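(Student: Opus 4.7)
The implication ``$\Leftarrow$'' is immediate: for any $\tau>0$, if $v\in C_{\bar z}\setminus\{0\}$, then $\bar{\mathfrak p}(x)\neq 0$ forces $v(x)=0$, so a fortiori $|\bar{\mathfrak p}(x)|>\tau$ forces $v(x)=0$; combined with the sign condition \eqref{eq:sign_cond}, this shows $C_{\bar z}\setminus\{0\}\subset C_{\bar z}^\tau$, and the assumed quadratic bound immediately yields $j''(\bar z)v^2\geq\nu\|v\|_{L^2(\Omega)}^2>0$.

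For the main implication ``$\Rightarrow$'', I would argue by contradiction. Assume strict positivity of $j''(\bar z)$ on $C_{\bar z}\setminus\{0\}$, but negate the quadratic bound simultaneously for $\tau=\nu=1/k$: for each $k\in\mathbb{N}$, select $v_k\in C_{\bar z}^{1/k}$ with $j''(\bar z)v_k^2<k^{-1}\|v_k\|_{L^2(\Omega)}^2$. Normalize $w_k:=v_k/\|v_k\|_{L^2(\Omega)}$, so that $\|w_k\|_{L^2(\Omega)}=1$ and $j''(\bar z)w_k^2<1/k$, and extract a (non-relabeled) subsequence with $w_k\rightharpoonup w$ in $L^2(\Omega)$.

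The next step is to verify that $w\in C_{\bar z}$. The sign conditions \eqref{eq:sign_cond} define a closed convex subset of $L^2(\Omega)$ and are thus preserved under weak convergence. For the activity condition, introduce $A_k:=\{x\in\Omega:|\bar{\mathfrak p}(x)|>1/k\}$, which is nondecreasing with $\bigcup_k A_k=\{\bar{\mathfrak p}\neq 0\}=:A$; since $w_k\mathbf{1}_{A_k}=0$, testing against arbitrary $\varphi\in L^2(\Omega)$ and using dominated convergence for $\mathbf{1}_{A_k}\varphi\to\mathbf{1}_A\varphi$ in $L^2(\Omega)$ gives $\int_\Omega w\mathbf{1}_A\varphi\,\mathrm{d}x=0$, hence $w=0$ a.e.~on $A$. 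Thus $w\in C_{\bar z}$.

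To close the argument, decompose $j''(\bar z)v^2=Q(v,v)+\alpha\|v\|_{L^2(\Omega)}^2$, where $Q(v,v)$ collects the terms $\int_\Omega\bigl[(\partial^2 L/\partial u^2)(\cdot,\bar u)-(\partial^2 a/\partial u^2)(\cdot,\bar u)\bar p\bigr]\phi_v^2\,\mathrm{d}x$ and $\phi_v\in\tilde H^s(\Omega)$ solves the linearization of \eqref{eq:weak_st_eq_integral} with right-hand side $v$. Assumption \ref{A2} together with $\bar u\in L^\infty(\Omega)$ and \ref{A3} ensures well-posedness of this linearization and boundedness of $v\mapsto\phi_v$ from $L^2(\Omega)$ into $\tilde H^s(\Omega)$; the compact embedding $\tilde H^s(\Omega)\hookrightarrow L^2(\Omega)$ from Proposition \ref{pro:Sobolev_embedding} then yields $\phi_{w_k}\to\phi_w$ in $L^2(\Omega)$. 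Using $\bar u,\bar p\in L^\infty(\Omega)$ together with the coefficient bounds in \ref{A3} and \ref{B2}, one concludes $Q(w_k,w_k)\to Q(w,w)$, and therefore
\[
\liminf_{k\to\infty}j''(\bar z)w_k^2=Q(w,w)+\alpha=j''(\bar z)w^2+\alpha\bigl(1-\|w\|_{L^2(\Omega)}^2\bigr).
\]
Since the left-hand side is $\leq 0$, either $w=0$ (forcing $\alpha\leq 0$, contradicting $\alpha>0$) or $w\in C_{\bar z}\setminus\{0\}$ (forcing $j''(\bar z)w^2\leq 0$, contradicting the hypothesis). The principal technical point is the passage to the limit in $Q(w_k,w_k)$, which hinges on the compactness of $v\mapsto\phi_v$ in $L^2(\Omega)$; the compact fractional Sobolev embedding of Proposition \ref{pro:Sobolev_embedding} is decisive here.
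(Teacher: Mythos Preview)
Your argument is correct and follows the same contradiction-plus-compactness route the paper points to (the structure of \cite[Theorem~4.8]{MR4358465}, with the restrictions $n\in\{2,3\}$, $s>n/4$ lifted as in Theorem~\ref{thm:suff_opt_cond}). One refinement: strong convergence $\phi_{w_k}\to\phi_w$ in $L^2(\Omega)$ alone is not quite enough to pass to the limit in the $\partial^2 L/\partial u^2$ contribution to $Q$, because \ref{B2} only places that coefficient in $L^r(\Omega)$ with $r>n/2s$, not in $L^\infty(\Omega)$; you actually need $\phi_{w_k}\to\phi_w$ in $L^{2r'}(\Omega)$ with $r'=r/(r-1)$, but since $r>n/2s$ forces $2r'<2n/(n-2s)$, the compact embedding of Proposition~\ref{pro:Sobolev_embedding} that you already invoke delivers this at no extra cost.
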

\begin{proof}
\EO{The proof of the equivalence \eqref{eq:second_order_equivalent} follows from a combination of the arguments elaborated in the proofs of \cite[Theorem 4.8] {MR4358465} and Theorem \ref{thm:suff_opt_cond}.}
\end{proof}

%%%%%%%%%%%%%%%%%%%%%%%%%%%%%%%%%%%%%%%%%%%%%%%%%%%%%%%%%%%%%%%%%%%%%%%%%%%%%
%%%%%%%%%%%%%%%%%%%%%%%%%%%%%%%%%%%%%%%%%%%%%%%%%%%%%%%%%%%%%%%%%%%%%%%%%%%%%
\subsection{Regularity estimates}
\label{sec:regularity}
%%%%%%%%%%%%%%%%%%%%%%%%%%%%%%%%%%%%%%%%%%%%%%%%%%%%%%%%%%%%%%%%%%%%%%%%%%%%%
%%%%%%%%%%%%%%%%%%%%%%%%%%%%%%%%%%%%%%%%%%%%%%%%%%%%%%%%%%%%%%%%%%%%%%%%%%%%%
In this section, \EO{we derive regularity estimates for an optimal triplet $(\bar u, \bar p, \bar z)$. To accomplish this task, we will assume that, in addition to \ref{A1}--\ref{A3} and \ref{B1}--\ref{B2}, $a$ and $L$ satisfy the following assumptions:}
\begin{enumerate}[label=(C.\arabic*)]
\item \label{C1} 
% \EO{For $s \geq \tfrac{1}{2}$ and for all $\mathfrak{m}>0$ and $u \in [-\mathfrak{m},\mathfrak{m}]$, $\partial L/ \partial u (\cdot,u) \in L^2(\Omega)$.}
\EO{For $s \in [\tfrac{1}{4},\tfrac{1}{2})$ and for all $\mathfrak{m}>0$, there exists a positive constant $C_{\mathfrak{m}}$ such that
$
 |a(x,u)| \leq C_{\mathfrak{m}}
$
and
$
| \partial L/\partial u (x,u)| \leq C_{\mathfrak{m}}
$ 
for a.e.~$x \in \Omega$ and $u \in [-\mathfrak{m},\mathfrak{m}].$}
% for a.e.~$x \in \Omega$ and $u \in [-\mathfrak{m},\mathfrak{m}]$.}
%
% \item \label{C2} For $s \in [\tfrac{1}{4},\tfrac{1}{2})$ and for all $\mathfrak{m}>0$, there exists a positive constant $C_{\mathfrak{m}}$ such that $|\partial L/\partial u(x,u)| \leq C_{\mathfrak{m}}$ for a.e.~$x \in \Omega$ and $u \in [-\mathfrak{m},\mathfrak{m}]$.
%
% \item \label{C3} \EO{Let $s \in (\tfrac{1}{2},1)$. For every $\mathfrak{m}>0$ and $u \in [-\mathfrak{m},\mathfrak{m}]$, we have that $a(\cdot,u)$, $\partial a/ \partial u (\cdot,u)$, and $\partial L/ \partial u (\cdot,u)$ belong to $C^{\beta}(\bar \Omega)$, for some $\beta > 0$.}
%%
%\item \label{C4} \EO{For $s \in [\tfrac{1}{4},\tfrac{1}{2})$ and every $\mathfrak{m}>0$ and $u \in [-\mathfrak{m},\mathfrak{m}]$, we have that $a(\cdot,u)$, $\partial a/ \partial u (\cdot,u)$, and $\partial L/ \partial u (\cdot,u)$ belong to $C^{1/2-s}(\bar \Omega)$.}
\end{enumerate}

With \EO{assumption \ref{C1} at hand, we derive a first regularity result in H\"older spaces.}

\begin{theorem}[H\"older regularity]
Let $n \geq 2$ and $s \in [\frac{1}{4},\tfrac{1}{2})$. Let $\Omega$ be a bounded Lipschitz domain satisfying an exterior ball condition. Let $(\bar u, \bar p, \bar z)$
% \in \tilde H^s(\Omega) \times \tilde H^s(\Omega) \times \mathbb{Z}_{ad}$ 
be an optimal triplet. Then, $\bar u \in C^s(\mathbb{R}^n)$, $\bar p \in C^s(\mathbb{R}^n)$, and $\bar z \in C^s(\bar \Omega)$.
\label{thm:regularity_space}
\end{theorem}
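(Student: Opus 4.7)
The plan is to apply Theorem \ref{thm:regularity_space_state_equation_integral_Holder} twice, once to the state equation and once (after rewriting) to the adjoint equation, and then to exploit the projection formula \eqref{eq:projection_control} to transfer regularity from $\bar p$ to $\bar z$. The range $s \in [\tfrac{1}{4},\tfrac{1}{2})$ is precisely where assumption \ref{C1} is available, which will be used to guarantee $L^\infty$ forcing in both equations.

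\textbf{Step 1 ($\bar u \in C^s(\mathbb{R}^n)$).} Since $\bar z \in \mathbb{Z}_{ad}$, we have $\bar z \in L^\infty(\Omega)$. Assumption \ref{C1} yields $a(\cdot,0) \in L^\infty(\Omega)$, and assumption \ref{A3} implies that $a=a(x,u)$ is locally Lipschitz in $u$, uniformly for $x \in \Omega$. The hypotheses of Theorem \ref{thm:regularity_space_state_equation_integral_Holder} are therefore met with $f=\bar z$, and we conclude $\bar u \in C^s(\mathbb{R}^n)$.

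\textbf{Step 2 ($\bar p \in C^s(\mathbb{R}^n)$).} I would rewrite the adjoint equation \eqref{eq:adj_eq_integral} (evaluated at $\bar u$) as a fractional semilinear PDE of the form $(-\Delta)^s \bar p + \tilde a(\cdot,\bar p) = \tilde f$ in $\Omega$ with homogeneous exterior data, where
\[
\tilde a(x,p) := \tfrac{\partial a}{\partial u}(x,\bar u(x)) \, p,
\qquad
\tilde f(x) := \tfrac{\partial L}{\partial u}(x, \bar u(x)).
\]
Using $\bar u \in L^\infty(\Omega)$ together with \ref{A2}, \ref{A3}, and \ref{C1}, the coefficient $c(x):=\tfrac{\partial a}{\partial u}(x,\bar u(x))$ is nonnegative and bounded, so $\tilde a$ is Carathéodory, monotone increasing in $p$, globally Lipschitz in $p$ uniformly in $x$, and $\tilde a(\cdot,0) \equiv 0 \in L^\infty(\Omega)$. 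Assumption \ref{C1} also gives $\tilde f \in L^\infty(\Omega)$. Applying Theorem \ref{thm:regularity_space_state_equation_integral_Holder} once more (with $u$ replaced by $\bar p$, $a$ by $\tilde a$, and $f$ by $\tilde f$) yields $\bar p \in C^s(\mathbb{R}^n)$.

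\textbf{Step 3 ($\bar z \in C^s(\bar\Omega)$).} From the projection formula \eqref{eq:projection_control}, $\bar z(x) = \Pi_{[\mathfrak{a},\mathfrak{b}]}(-\alpha^{-1}\bar p(x))$ a.e.\ in $\Omega$. Since $\Pi_{[\mathfrak{a},\mathfrak{b}]}:\mathbb{R}\to\mathbb{R}$ is $1$-Lipschitz, it preserves H\"older regularity: for any $x,y \in \bar\Omega$,
\[
|\bar z(x)-\bar z(y)| \leq \alpha^{-1}|\bar p(x)-\bar p(y)| \leq \alpha^{-1}[\bar p]_{C^s(\mathbb{R}^n)} |x-y|^s,
\]
and $\bar z$ is bounded by $\max\{|\mathfrak{a}|,|\mathfrak{b}|\}$. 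This gives $\bar z \in C^s(\bar\Omega)$.

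The only genuinely nontrivial step is Step 2, where one must recognize that the linear adjoint equation fits within the abstract semilinear framework of Theorem \ref{thm:regularity_space_state_equation_integral_Holder}; the remaining arguments are standard bootstrapping combined with the $1$-Lipschitz character of $\Pi_{[\mathfrak{a},\mathfrak{b}]}$.
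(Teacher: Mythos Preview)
Your proof is correct and follows essentially the same approach as the paper: the paper invokes the linear H\"older result (Proposition~\ref{pro:state_regularity_Holder}) directly for both $\bar u$ and $\bar p$ after checking that the corresponding right-hand sides lie in $L^\infty(\Omega)$, whereas you apply the semilinear wrapper Theorem~\ref{thm:regularity_space_state_equation_integral_Holder}, which amounts to the same thing. For Step~3 the paper cites \cite[Theorem~A.1]{MR1786735} while you give the elementary $1$-Lipschitz argument; these are equivalent.
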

\begin{proof}
Since $\bar z - a(\cdot,\bar u) \in L^{\infty}(\Omega)$, we are in position to utilize the regularity results of Proposition \ref{pro:state_regularity_Holder} to obtain that $\bar u \in C^s(\mathbb{R}^n)$ together with 
\begin{equation}
\begin{aligned}
\| \bar u \|_{C^s(\mathbb{R}^n)} 
& \lesssim  \| \bar z - a(\cdot,\bar u)\|_{L^{\infty}(\Omega)} 
\lesssim 
\| \bar z - a(\cdot,0) \|_{L^{\infty}(\Omega)}
+
\| \bar u \|_{L^{\infty}(\Omega)}
\\
& \lesssim
\| \bar z - a(\cdot,0)\|_{L^{\infty}(\Omega)},
\qquad
s \in [\tfrac{1}{4},\tfrac{1}{2}).
\end{aligned}
\label{eq:first_regularity_u}
\end{equation}
In all three estimates the hidden constant is independent of $\bar u$ and $\bar z$ but it depends on $\Omega$ and $s$. To obtain the second estimate in \eqref{eq:first_regularity_u}, we have used that $a = a(x,u)$ is locally Lipschitz in $u$, uniformly for $x \in \Omega$. The third estimate in \eqref{eq:first_regularity_u} follows from \eqref{eq:stability_integral}. To derive a regularity result for $\bar p$, we observe that $\bar p \in L^{\infty}(\Omega)$. This is a consequence of the existence of $r > n/2s$ such that $\partial L/\partial u (\cdot,\bar u) \in L^r(\Omega)$ \EO{(see assumption \ref{B2})} and the fact that $0 \leq \partial a/\partial u(\cdot,\bar u) \in L^{\infty}(\Omega)$ \EO{(see assumption \ref{A3})}. Consequently, assumption \ref{C1} guarantees that $\partial L/\partial u (\cdot,\bar u)  - \partial a/\partial u (\cdot,\bar u) \bar p \in L^{\infty}(\Omega)$. We can thus invoke Proposition \ref{pro:state_regularity_Holder} to conclude that $\bar p \in C^s(\mathbb{R}^n)$ together with
\begin{equation}
\| \bar p \|_{C^s(\mathbb{R}^n)} 
\lesssim  
\left 
\| \frac{\partial L}{\partial u} (\cdot,\bar u)  - \frac{\partial a}{\partial u}(\cdot,\bar u) \bar p 
\right \|_{L^{\infty}(\Omega)} 
\lesssim
\left 
\| \frac{\partial L}{\partial u} (\cdot,\bar u) \right \|_{L^{\infty}(\Omega)}  
+
\left 
\| \bar p 
\right \|_{L^{\infty}(\Omega)},
\label{eq:first_regularity_p}
\end{equation}
upon utilizing the first estimate in \ref{A3}. Finally, the projection formula \eqref{eq:projection_control} and \cite[Theorem A.1]{MR1786735} allow us to conclude that $\bar z \in C^s(\bar \Omega)$ with a similar estimate.
\end{proof}

% \begin{remark}[regularity estimates]\rm
% Let the assumptions of Theorem \ref{thm:regularity_space} hold. If, in addition, we assume that $\partial L/\partial u$ is locally Lipschitz with respect to second variable, then, estimate \eqref{eq:first_regularity_p} allows us to conclude that
% \begin{equation}
% \| \bar p \|_{C^s(\mathbb{R}^n)} + \| \bar z \|_{C^s(\bar \Omega)} 
% \lesssim  
% \left 
% \| \tfrac{\partial L}{\partial u} (\cdot,0) \right \|_{L^{\infty}(\Omega)}  
% +
% \EO{\| \bar z - a(\cdot,0)
% \|_{L^{\infty}(\Omega)}.}
% \end{equation}
% upon utilizing the estimate \EO{$\| \bar u \|_{L^{\infty}(\Omega)} \lesssim \| \bar z - a(\cdot,0) \|_{L^{r}(\Omega)}$} and a similar estimate for the adjoint problem; the latter reads $\| \bar p \|_{L^{\infty}(\Omega)} \lesssim \| \partial L/\partial u(\cdot,\bar u) \|_{L^{r}(\Omega)}$.
% \end{remark}

To \EO{present regularity results on Sobolev spaces we will assume, in addition, that} $a$, $\partial a/ \partial u $, and $\partial L/ \partial u$ satisfy the following assumptions:

\begin{enumerate}[label=(D.\arabic*)]
\item \label{D1} Let $s \geq \tfrac{1}{2}$. For all $\mathfrak{m}>0$ and $u \in [-\mathfrak{m},\mathfrak{m}]$, $a(\cdot,u), \partial L/ \partial u (\cdot,u) \in L^2(\Omega)$.
%\item \label{D1} \EO{For $s \in (\tfrac{1}{2},1)$, $a$, $\partial a/ \partial u$, and $\partial L/ \partial u$ satisfy \eqref{eq:Nemitskii} with $\varrho = \beta$, for some $\beta > 0$.}
\item \label{D2} For $s \in [\tfrac{1}{4},\tfrac{1}{2})$, $a$, $\partial a/ \partial u$, and $\partial L/ \partial u$ satisfy \eqref{eq:Nemitskii} with $\varrho = \tfrac{1}{2} - s$.
\end{enumerate}

\begin{theorem}[Sobolev regularity]
Let $n \geq 2$, $s \in [\tfrac{1}{4},1)$, and let $\Omega$ be a bounded Lipschitz domain such that it satisfies an exterior ball condition for $s<\tfrac{1}{2}$. If $(\bar u, \bar p, \bar z)$
% \in \tilde H^s(\Omega) \times \tilde H^s(\Omega) \times \mathbb{Z}_{ad}$
denotes an optimal triplet, then
\begin{equation}
\bar u, \bar p, \bar z \in H^{s + \frac{1}{2} - \epsilon}(\Omega)
\end{equation}
for every $\epsilon \in (0,\epsilon_{\ast})$; $\epsilon_{\ast}>0$ being described in estimates \eqref{eq:reg_u_s121}--\eqref{eq:reg_p_1412}.
\label{thm:regularity_space_Sobolev}
\end{theorem}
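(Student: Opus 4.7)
The plan is to bootstrap the regularity of the triplet from the linear fractional regularity theory of Section~3 by viewing \eqref{eq:weak_st_eq_integral} as a fractional semilinear PDE with right-hand side $\bar z$, and \eqref{eq:adj_eq_integral} as a linear fractional PDE with right-hand side $\partial L/\partial u(\cdot,\bar u)-\partial a/\partial u(\cdot,\bar u)\bar p$. The control is then handled via the projection formula \eqref{eq:projection_control}. The argument naturally splits into the two regimes $s\in[\tfrac12,1)$ and $s\in[\tfrac14,\tfrac12)$, because Proposition \ref{pro:state_regularity_Lipschitz_new} requires $L^2$ data while Proposition \ref{pro:state_regularity_Lipschitz_old} requires $C^{1/2-s}$ data together with the exterior ball condition.

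For the state $\bar u$, I would directly apply Theorem \ref{thm:regularity_space_state_equation_integral_Sobolev} to \eqref{eq:weak_st_eq_integral}. When $s\in[\tfrac12,1)$ the hypotheses reduce to $\bar z\in L^2(\Omega)$ (trivially true since $\mathbb{Z}_{ad}\subset L^\infty(\Omega)$) together with \ref{A1}, \ref{A3} and \ref{D1}. When $s\in[\tfrac14,\tfrac12)$ the same theorem applies once I verify $\bar z\in C^{1/2-s}(\bar\Omega)$, and this is immediate from Theorem \ref{thm:regularity_space} (which gives $\bar z\in C^s(\bar\Omega)$) combined with the embedding $C^s(\bar\Omega)\hookrightarrow C^{1/2-s}(\bar\Omega)$ valid whenever $s\geq\tfrac14$; assumptions \ref{C1} and \ref{D2} cover the remaining hypotheses on $a$.

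For the adjoint $\bar p$, I would recast \eqref{eq:adj_eq_integral} as $(-\Delta)^s\bar p=\partial L/\partial u(\cdot,\bar u)-\partial a/\partial u(\cdot,\bar u)\bar p$ in $\Omega$, $\bar p=0$ in $\Omega^c$. For $s\in[\tfrac12,1)$, assumption \ref{D1} yields $\partial L/\partial u(\cdot,\bar u)\in L^2(\Omega)$, \ref{A3} yields $\partial a/\partial u(\cdot,\bar u)\in L^\infty(\Omega)$, and $\bar p\in L^\infty(\Omega)$ follows as in the proof of Theorem \ref{thm:regularity_space}; hence the right-hand side lies in $L^2(\Omega)$ and Proposition \ref{pro:state_regularity_Lipschitz_new} closes the argument. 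For $s\in[\tfrac14,\tfrac12)$, Theorem \ref{thm:regularity_space} first provides $\bar u,\bar p\in C^s(\mathbb{R}^n)$; assumption \ref{D2} together with the Nemitskii criterion of Remark \ref{rk:Nemitskii} then gives $\partial L/\partial u(\cdot,\bar u)$ and $\partial a/\partial u(\cdot,\bar u)$ in $C^{1/2-s}(\bar\Omega)$, so the right-hand side is $C^{1/2-s}(\bar\Omega)$ and Proposition \ref{pro:state_regularity_Lipschitz_old} delivers the desired Sobolev bound on $\bar p$.

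For the control $\bar z$, I would exploit the projection formula $\bar z=\Pi_{[\mathfrak{a},\mathfrak{b}]}(-\alpha^{-1}\bar p)$. Since truncation preserves $H^t$ for $t\leq 1$ (standard Lipschitz-composition result for Sobolev spaces), the bound on $\bar p$ transfers to $\bar z$ up to the ceiling $t=1$: in the regime $s\in[\tfrac14,\tfrac12)$ we have $s+\tfrac12-\epsilon<1$ automatically, whereas for $s\in[\tfrac12,1)$ the cap forces $\epsilon\geq s-\tfrac12$, which is how $\epsilon_\ast$ is fixed in the estimate \eqref{eq:reg_p_1412}. The main delicate point of the proof is the bootstrap for the adjoint in the range $s\in[\tfrac14,\tfrac12)$: here one genuinely needs the H\"older regularity of \emph{both} $\bar u$ and $\bar p$ coming from Theorem \ref{thm:regularity_space} in order to convert the Nemitskii hypothesis \ref{D2} into an effective $C^{1/2-s}$ source, and to check that the embedding $s\geq\tfrac12-s$ allows the composition to land in the right H\"older class.
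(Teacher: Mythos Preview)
Your proposal is correct and follows essentially the same line as the paper: split by the value of $s$, feed $\bar z$ into Theorem~\ref{thm:regularity_space_state_equation_integral_Sobolev} for $\bar u$, treat the adjoint as a linear problem and apply Proposition~\ref{pro:state_regularity_Lipschitz_new} (for $s\geq\tfrac12$) or Proposition~\ref{pro:state_regularity_Lipschitz_old} after the $C^s$ bootstrap of Theorem~\ref{thm:regularity_space} (for $s\in[\tfrac14,\tfrac12)$), and then push the regularity onto $\bar z$ via the projection formula.

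Two small discrepancies are worth flagging. First, the paper isolates $s=\tfrac12$ as a separate case because Proposition~\ref{pro:state_regularity_Lipschitz_new} yields $H^{2s-2\epsilon}=H^{s+\frac12-2\epsilon}$ there with the constant $\epsilon^{-\frac12-\zeta}$, so the resulting $\epsilon_\ast$ and the logarithmic exponents differ from the $s>\tfrac12$ case. Second, your remark that truncation caps the transfer at $t\leq 1$ and that this determines $\epsilon_\ast$ for $s>\tfrac12$ is not how the paper proceeds: the paper invokes \cite[Theorem~1]{MR1173747} for the projection step and records $\epsilon_\ast=s+\tfrac12$ in \eqref{eq:reg_u_s121}--\eqref{eq:reg_p_s121} without any $t\leq1$ restriction (your cross-reference to \eqref{eq:reg_p_1412} is to the $s<\tfrac12$ estimate, not the $s>\tfrac12$ one).
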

\begin{proof} 
We consider three cases.

\emph{Case} 1: $s \in (\tfrac{1}{2},1)$. Since $\bar z - a(\cdot,\bar u) \in L^2(\Omega)$, we can invoke the regularity results of Proposition \ref{pro:state_regularity_Lipschitz_new} to deduce $\bar u \in H ^{s+1/2-\epsilon}(\Omega)$ for every $0<\epsilon<s+\frac{1}{2}$. In addition, 
\begin{equation}
\begin{aligned}
\| \bar u \|_{H^{s+\frac{1}{2}-\epsilon}(\Omega)} 
& \lesssim \epsilon^{-\frac{1}{2}} \left( \| \bar z - a(\cdot,0) \| _{L^2(\Omega)} +  \| \bar u \| _{L^2(\Omega)}\right)
\\
& 
\lesssim 
\epsilon^{-\frac{1}{2}}  \| \bar z - a(\cdot,0) \| _{L^2(\Omega)}
\quad
\forall \epsilon \in (0,s+\tfrac{1}{2}),
\quad
s \in (\tfrac{1}{2},1),
\end{aligned}
\label{eq:reg_u_s121}
\end{equation}
upon utilizing that \EO{$a = a(x,u)$ is locally Lipschitz in $u$, uniformly for $x \in \Omega$, and the stability bound $| \bar u |_{H^s(\mathbb{R}^n)} \lesssim \| f - a(\cdot,0) \|_{L^2(\Omega)}$.} Similarly, the regularity results of Proposition \ref{pro:state_regularity_Lipschitz_new} applied now to the adjoint equation \eqref{eq:adj_eq_integral} allow us to conclude that $\bar p \in H ^{s+1/2-\epsilon}(\Omega)$, for every $0<\epsilon<s+\frac{1}{2}$, together with the estimates
\begin{equation}
\label{eq:reg_p_s121}
 \begin{aligned}
 \| \bar p \|_{H^{s+\frac{1}{2}-\epsilon}(\Omega)} 
% \lesssim 
% \epsilon^{-\frac{1}{2}} \left[ \left\| \frac{\partial L}{\partial u} (\cdot,\bar u) \right\|_{L^2(\Omega)} + \left\| \frac{\partial a}{\partial u}(\cdot,\bar u) \bar p \right\|_{L^2( \Omega)} \right]
% \\
& \lesssim
\epsilon^{-\frac{1}{2}} \left[ \left\| \frac{\partial L}{\partial u} (\cdot,\bar u) \right\|_{L^2( \Omega)} 
+ 
\| \bar p\|_{L^{2}(\Omega)} 
\right]
\\
& \lesssim
\epsilon^{-\frac{1}{2}}  \left\| \frac{\partial L}{\partial u} (\cdot,\bar u) \right\|_{L^2( \Omega)}
\quad
\forall \epsilon \in (0,s+\tfrac{1}{2}),
\quad
s \in (\tfrac{1}{2},1).
\end{aligned}
\end{equation}
% 
% \begin{multline}
% \| \bar p \|_{H^{s+\frac{1}{2}-\epsilon}(\Omega)} 
% % \lesssim 
% % \epsilon^{-\frac{1}{2}} \left[ \left\| \frac{\partial L}{\partial u} (\cdot,\bar u) \right\|_{L^2(\Omega)} + \left\| \frac{\partial a}{\partial u}(\cdot,\bar u) \bar p \right\|_{L^2( \Omega)} \right]
% % \\
% \lesssim
% \epsilon^{-\frac{1}{2}} \left[ \left\| \frac{\partial L}{\partial u} (\cdot,\bar u) \right\|_{L^2( \Omega)} 
% + 
% \| \bar p\|_{L^{2}(\Omega)} 
% \right]
% \\
% \lesssim
% \epsilon^{-\frac{1}{2}}  \left\| \frac{\partial L}{\partial u} (\cdot,\bar u) \right\|_{L^2( \Omega)}
% \,\,
% \forall \epsilon \in (0,s+\tfrac{1}{2}),
% \,\,
% s \in (\tfrac{1}{2},1).
% \label{eq:reg_p_s121}
% \end{multline}
On the basis of the projection formula \eqref{eq:projection_control}, an application of \cite[Theorem 1]{MR1173747} reveals that
$\bar z \in H^{s+1/2 - \epsilon}(\Omega)$, for every $0<\epsilon<s+\frac{1}{2}$, with a similar estimate.

\emph{Case} 2: $s = \tfrac{1}{2}$. \EO{An immediate application of 
Proposition \ref{pro:state_regularity_Lipschitz_new} reveals that} 
\begin{equation}
\| \bar u \|_{H^{s+\frac{1}{2}-2\epsilon}(\Omega)} \lesssim \epsilon^{-\frac{1}{2}-\zeta}\|  \bar z - a(\cdot,0) \|_{L^2(\Omega)}
\quad
\forall \, 2\epsilon \in (0,s+\tfrac{1}{2}),
\quad
s = \tfrac{1}{2}.
\label{eq:reg_u_s12}
\end{equation} 
We now notice that \ref{A3}, \ref{D1}, and the well-posedness of the adjoint equation \eqref{eq:adj_eq_integral} reveal that $\partial L/\partial L(\cdot,\bar u) - \partial a/\partial u(\cdot, \bar u)\bar p \in L ^{2}(\Omega)$. Invoke Proposition \ref{pro:state_regularity_Lipschitz_new} to obtain
\begin{equation}
\begin{aligned}
\| \bar p \|_{H^{s+\frac{1}{2}-2\epsilon}(\Omega)} 
% & \lesssim 
% \epsilon^{-\frac{1}{2} + \zeta}
% \left( \left\| \tfrac{\partial L}{\partial u} (\cdot,\bar u) \right\|_{L^2( \Omega)} 
% +
% \left\| \tfrac{\partial a}{\partial u} (\cdot,\bar u) \bar p \right\|_{L^2( \Omega)} 
% \right)
% \\
& \lesssim 
\epsilon^{-\frac{1}{2} + \zeta}
\left\| \frac{\partial L}{\partial u} (\cdot,\bar u) \right\|_{L^2( \Omega)}
\quad
\forall \,2\epsilon \in (0,s+\tfrac{1}{2}),
\quad
s = \tfrac{1}{2}.
\end{aligned}
\label{eq:reg_p_s12}
\end{equation}
The projection formula \eqref{eq:projection_control} combined with an application of \cite[Theorem 1]{MR1173747} reveal that $\bar z \in H^{s+1/2 - 2\epsilon}(\Omega)$, for every $2\epsilon \in (0,s+\tfrac{1}{2})$, with a similar estimate.

\emph{Case} 3: $s \in [\tfrac{1}{4},\tfrac{1}{2})$. \EO{Since $\bar z \in C^{1/2-s}(\bar \Omega)$, which follows from $1/2 - s \leq s$ and Theorem \ref{thm:regularity_space}, we can invoke assumption \ref{D2} and Proposition \ref{pro:state_regularity_Lipschitz_old} to conclude that}
% $\bar u\in H^{s+1/2-\epsilon}(\Omega)$ for every $\epsilon >0$. In addition, we have the estimate
\begin{equation}
\| \bar u \|_{H^{s+\frac{1}{2}-\epsilon}(\Omega)} \lesssim \epsilon^{-1} \left(1+ \|  \bar z \|_{C^{s}(\bar \Omega)} + \| \bar u \|_{C^{s}(\bar \Omega)} \right)
\quad
\forall 0<\epsilon < s + \tfrac{1}{2},
\quad
s \in [\tfrac{1}{4},\tfrac{1}{2});
\label{eq:reg_u_1412}
\end{equation}
\EO{compare with \eqref{eq:reg_u_s_141}.} We now notice that $\EO{\partial L/\partial u}(\cdot,\bar u) - \partial a/\partial u(\cdot, \bar u)\bar p \in C^{1/2-s}(\bar \Omega)$. In fact, by assumption \ref{D2} we have that $\EO{\partial L/\partial u}(\cdot,\bar u) \in C^{1/2-s}(\bar \Omega)$. On the other hand, Theorem \ref{thm:regularity_space} guarantees that $\bar p \in C^s(\mathbb{R}^n)$. This combined with \ref{D2} allow us to conclude that $\partial a/\partial u(\cdot, \bar u) \bar p \in C^{1/2-s}(\bar \Omega)$; observe that $1/2 - s \leq s$. We thus invoke Proposition \ref{pro:state_regularity_Lipschitz_old} to arrive at $\bar p \in H^{s+1/2-\epsilon}(\Omega)$, for every $0<\epsilon< s + \frac{1}{2}$, together with
\begin{multline}
\| \bar p \|_{H^{s+\frac{1}{2}-\epsilon}(\Omega)} 
% \lesssim 
% \epsilon^{-1} \left( \left\| \tfrac{\partial L}{\partial u} (\cdot,\bar u) \right\|_{C^{\frac12 - s}(\bar \Omega)} +  \left\| \tfrac{\partial a}{\partial u} (\cdot,\bar u) \bar p \right\|_{C^{\frac12 - s}(\bar \Omega)}  \right)
% \\
\lesssim \epsilon^{-1} \bigg( \left\| \frac{\partial L}{\partial u} (\cdot,\bar u) \right\|_{C^{\frac12 - s}(\bar \Omega)}
+  
\|  \bar p \|_{C^{\frac12 - s}(\bar \Omega)}
\\
+
\|  \bar p \|_{L^{\infty}(\Omega)}
\left[
1+ \|  \bar u \|_{C^{\frac12 - s}(\bar \Omega)}
\right]
\bigg)
\quad 
\forall 0<\epsilon < s + \tfrac{1}{2},
\quad
s \in [\tfrac{1}{4},\tfrac{1}{2}).
\label{eq:reg_p_1412}
\end{multline}
The projection formula \eqref{eq:projection_control} and \cite[Theorem 1]{MR1173747} reveal that $\bar z \in H^{s+1/2 - \epsilon}(\Omega)$, for every $0<\epsilon < s + \tfrac{1}{2}$, with a similar estimate. This concludes the proof.
\end{proof}

We conclude with a regularity result for $s \in (0,\frac{1}{2})$. When $s\in [\tfrac{1}{4},\tfrac{1}{2})$ the result is weaker than the one obtained in Theorem \ref{thm:regularity_space_Sobolev}. However, it holds under weaker assumptions on the control problem data. In fact, in what follows, \emph{we do not operate under assumptions} \ref{C1}, \ref{D1}and \ref{D2}.

\begin{theorem}[Sobolev regularity]
Let $n \geq 2$, $s \in (0,\tfrac{1}{2})$, and let $\Omega$ be a bounded Lipschitz domain. Assume that, for all $\mathsf{m}>0$ and $u \in [-\mathsf{m},\mathsf{m}]$, $a(\cdot,u), \partial L/\partial u (\cdot,u) \in L^2(\Omega)$. If $(\bar u, \bar p, \bar z) \in \tilde H^s(\Omega) \times \tilde H^s(\Omega) \times \mathbb{Z}_{ad}$ denotes an optimal triplet, then
\begin{equation}
\bar u, \bar p, \bar z \in H^{2s - 2\epsilon}(\Omega)
\end{equation}
for every $\epsilon \in (0,s)$.
\label{thm:regularity_space_Sobolev_2}
\end{theorem}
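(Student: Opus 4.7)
The plan is to mirror Case 3 of the proof of Theorem \ref{thm:regularity_space_Sobolev}, but with Proposition \ref{pro:state_regularity_Lipschitz_new} (valid on any bounded Lipschitz domain under $L^2$ data) replacing Proposition \ref{pro:state_regularity_Lipschitz_old} (which requires an exterior ball condition and H\"older data). This substitution costs us one notch in the exponent---$2s$ in place of $s+\tfrac{1}{2}$---but removes the exterior ball condition as well as the additional regularity assumptions on $a$ and $L$ (namely \ref{C1}, \ref{D1}, and \ref{D2}). I would argue variable by variable.

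For the state, since $\bar z \in \mathbb{Z}_{ad} \subset L^\infty(\Omega)$ we have $\bar z \in L^r(\Omega) \cap L^2(\Omega)$ for every $r > n/(2s)$. Combined with \ref{A1} and \ref{A3} (which make $a$ locally Lipschitz in $u$ uniformly in $x$) and with the current hypothesis $a(\cdot,0) \in L^2(\Omega)$, Theorem \ref{thm:regularity_space_state_equation_integral_SobolevII} applies with $f=\bar z$ and yields $\bar u \in H^{2s-2\epsilon}(\Omega)$ for every $\epsilon \in (0,s)$. For the adjoint variable I would first note that $\bar p \in L^\infty(\Omega)$; this follows from the $L^\infty$-bound of Theorem \ref{thm:stata_equation_well_posedness_integral} applied to \eqref{eq:adj_eq_integral}, which is linear in $\bar p$, using \ref{A2}, \ref{A3}, and the fact that \ref{B2} provides $\partial L/\partial u(\cdot,\bar u) \in L^r(\Omega)$ with $r > n/(2s)$. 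I would then rewrite the adjoint equation as
\[
(-\Delta)^s \bar p = \tfrac{\partial L}{\partial u}(\cdot,\bar u) - \tfrac{\partial a}{\partial u}(\cdot,\bar u)\,\bar p \textrm{ in } \Omega, \qquad \bar p = 0 \textrm{ in } \Omega^c,
\]
whose right-hand side lies in $L^2(\Omega)$: the first term by the theorem's hypothesis, the second because $\partial a/\partial u(\cdot,\bar u) \in L^\infty(\Omega)$ by \ref{A3} and $\bar p \in L^\infty(\Omega)$. Proposition \ref{pro:state_regularity_Lipschitz_new} then delivers $\bar p \in H^{2s-2\epsilon}(\Omega)$ for every $\epsilon \in (0,s)$.

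It remains to handle the control. I would invoke the projection formula \eqref{eq:projection_control} together with \cite[Theorem 1]{MR1173747}: because $2s-2\epsilon<1$ whenever $s \in (0,\tfrac{1}{2})$, the Lipschitz truncation $\Pi_{[\mathfrak{a},\mathfrak{b}]}$ preserves $H^{2s-2\epsilon}(\Omega)$-membership, so $\bar z$ inherits the regularity of $\bar p$. The only genuinely delicate step I foresee is obtaining the $L^\infty$-bound on $\bar p$ in the absence of the data assumptions \ref{C1}, \ref{D1}, and \ref{D2}; once this boundedness is in place, the remainder of the argument is a direct application of Proposition \ref{pro:state_regularity_Lipschitz_new} and Theorem \ref{thm:regularity_space_state_equation_integral_SobolevII}.
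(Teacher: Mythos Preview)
Your proposal is correct and follows essentially the same route the paper indicates (the paper's own proof simply refers back to the arguments of Theorem~\ref{thm:regularity_space_Sobolev}, and you have carried out precisely that adaptation, replacing Proposition~\ref{pro:state_regularity_Lipschitz_old} by Proposition~\ref{pro:state_regularity_Lipschitz_new}). One minor remark: the $L^\infty$-bound on $\bar p$ you flag as ``delicate'' is in fact already secured by the standing assumptions \ref{A2}, \ref{A3}, and \ref{B2} via the well-posedness of \eqref{eq:adj_eq_integral} (see the discussion immediately following that equation), so no extra work is needed there.
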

\begin{proof}
The proof follows similar arguments to those elaborated in the proof of Theorem \ref{thm:regularity_space_Sobolev}. For brevity, we skip the details.
\end{proof}

%%%%%%%%%%%%%%%%%%%%%%%%%%%%%%%%%%%%%%%%%%%%%%%%%%%%%%%%%%%%%%%%%%%%%%%%%%%%%
%%%%%%%%%%%%%%%%%%%%%%%%%%%%%%%%%%%%%%%%%%%%%%%%%%%%%%%%%%%%%%%%%%%%%%%%%%%%%
\section{Finite element approximation of fractional semilinear PDEs}
\label{sec:fem}
%%%%%%%%%%%%%%%%%%%%%%%%%%%%%%%%%%%%%%%%%%%%%%%%%%%%%%%%%%%%%%%%%%%%%%%%%%%%%
%%%%%%%%%%%%%%%%%%%%%%%%%%%%%%%%%%%%%%%%%%%%%%%%%%%%%%%%%%%%%%%%%%%%%%%%%%%%%
Let us begin by describing the finite element framework that we will adopt. To avoid technical difficulties, we assume that $\Omega$ is a Lipschitz polytope \EO{so that it can be triangulated exactly}; when needed, we shall additionally assume that it satisfies an exterior ball condition. Let $\mathbb{T}$ be a collection of conforming and simplicial triangulations $\T = \{ T \}$ of $\bar \Omega$, which are obtained by subsequent refinements of an initial mesh. We assume that $\mathbb{T}$ is \emph{shape regular}. \EO{Finally, we define $h_{\T} := \max \{h_T: T \in \T \}$, where $h_T := \mathrm{diam}(T)$.}

Given \EO{a mesh $\T \in \mathbb{T}$, we introduce the basic finite element space} 
$
\V(\T) = \{ v_{\T }\in C^0( \overline {\Omega} ): v_{\T}|_T \in \mathbb{P}_1(T) \ \forall T \in \T,
\,
 v_{\T} = 0 \textrm{ on } \partial \Omega\}.
$
\EO{The following comments are now in order. First, for every $s \in (0,1)$, $\V(\T) \subset \tilde H^s(\Omega)$. Second, we enforce a classical homogeneous Dirichlet boundary condition at \(\partial \Omega\). Observe that discrete functions are trivially extended by zero to $\Omega^c$.} 

\subsection{The discrete problem}
We introduce the following finite element approximation of the semilinear elliptic PDE \eqref{eq:weak_semilinear_pde_integral}: Find $u_{\T} \in \V(\T)$ such that
\begin{equation}
 \label{eq:discrete_semilinear_pde}
  \mathcal{A} (u_{\T},v_{\T})  +  \int_{\Omega} a(x,u_{\T}(x)) v_{\T}(x) \mathrm{d}x = \int_{\Omega} f(x) v_{\T}(x) \mathrm{d}x \quad \forall v_{\T} \in \V(\T).
\end{equation}
\EO{Let $r$, $f$, and $a$ be as in the statement of Theorem \ref{thm:stata_equation_well_posedness_integral}.}
% Let $r>n/2s$, $f \in L^r(\Omega)$, and let $a = a(x,u) : \Omega \times \mathbb{R} \rightarrow \mathbb{R}$ be a Carath\'eodory function that is monotone increasing in $u$ and satisfies \eqref{eq:assumption_on_phi_state_equation} and $a(\cdot,0) \in L^r(\Omega)$. 
% Within this setting, Theorem \ref{thm:stata_equation_well_posedness_integral} guarantees that the continuous problem \eqref{eq:weak_semilinear_pde_integral} admits a unique solution $u \in \tilde H^s(\Omega) \cap L^{\infty}(\Omega)$ satisfying \eqref{eq:stability_integral}. 
Since the bilinear form $\mathcal{A}$ is coercive and $a$ is monotone increasing in $u$, an application of Brouwer's fixed point theorem yields the existence of a unique solution for problem \eqref{eq:discrete_semilinear_pde}.
% see also the proof of \cite[Theorem 26.A]{MR1033498}. 
In addition, we have the stability bound $\| u_{\T} \|_s \lesssim \| f \|_{H^{-s}(\Omega)}$.

\subsection{Error estimates}
In what follows, we derive error estimates for the proposed finite element scheme. To accomplish this task, we will assume that
\begin{enumerate}[label=(E.\arabic*)]
\item \EO{The nonlinear function $a$ satisfies
$
|a(x,u) - a(x,v)| \leq |\phi(x)| |u-v|
$
for a.e.~$x \in \Omega$ and $u,v \in \mathbb{R}$, where $\phi \in L^{\mathfrak{r}}(\Omega)$ and $\mathfrak{r} = \tfrac{n}{2s}$.}
\label{eq:assumption_on_a_phi}
\end{enumerate}

To simplify the presentation of the derived error bounds, we define
\begin{equation}
\begin{aligned}
\Lambda(f,a)  := 1
+
\|f \|_{C^{\frac{1}{2}-s}(\bar \Omega)}
+
\| f - a(\cdot,0) \|_{L^{\infty}(\Omega)},
\,\,\,
\Sigma(f,a)  := \| f - a(\cdot,0) \|_{L^{2}(\Omega)}.
\end{aligned}
\end{equation}

\begin{theorem}[error estimates]
Let $n \geq 2$ and $s \in (0,1)$. Let $\Omega \subset \mathbb{R}^n$ be a bounded Lipschitz domain. Let $a$ be as in the statement of Theorem \ref{thm:stata_equation_well_posedness_integral}. Assume, in addition, that $a$ satisfies \ref{eq:assumption_on_a_phi}. \EO{Let $u \in \tilde H^s(\Omega)$ and $u_{\T} \in \mathbb{V}(\T)$ be the solutions to \eqref{eq:weak_semilinear_pde_integral} and  \eqref{eq:discrete_semilinear_pde}, respectively.} Then, we have the quasi-best approximation result
\begin{equation}
\label{eq:error_estimate_semilinear_s}
\| u - u_{\T} \|_{s} \lesssim \| u - v_{\T} \|_{s} \quad \forall v_{\T} \in \mathbb{V}(\T).
\end{equation}
If, in addition, $s \in [\tfrac{1}{4},1)$, $\Omega$ satisfies an exterior ball condition for $s<\tfrac{1}{2}$, $a = a(x,u)$ is locally Lipschitz in $u$, uniformly for $x \in \Omega$, $a(\cdot,0) \in L^{2}(\Omega)$ for $s \in [\frac{1}{2},1)$, $a(\cdot,0) \in L^{\infty}(\Omega)$ for $s \in [\tfrac{1}{4},\frac{1}{2})$, $a$ satisfies \eqref{eq:Nemitskii} with $\varrho$ as in  \eqref{eq:zeta}, and $f \in L^r(\Omega)$ $(r>n/2s)$ satisfies \eqref{eq:f}, then we have the quasi-optimal a priori error estimates
\begin{align}
\| u - u_{\T} \|_{s} & \lesssim h_{\T}^{\frac{1}{2} } | \log h_{\T} | \Lambda(f,a),
\,\,\,\,
s \in [\tfrac{1}{4},\tfrac{1}{2}),
\label{eq:error_estimate_semilinear_s_final_1412}
\\
\| u - u_{\T} \|_{s} & \lesssim  h_{\T}^{\frac{1}{2}}|\log h_{\T}|^{\frac{3}{2} + \zeta} \Sigma(f,a),
\,\,\,\,
s = \tfrac{1}{2},
\label{eq:error_estimate_semilinear_s_final_12}
\\
\| u - u_{\T} \|_{s} & \lesssim h_{\T}^{\frac{1}{2} } | \log h_{\T} |^{\frac{1}{2}} \Sigma(f,a),
\,\,\,\,
s \in (\tfrac{1}{2},1).
\label{eq:error_estimate_semilinear_s_final_121}
\end{align} 
Here, \EO{$\zeta$} is as in the statement of Proposition \ref{pro:state_regularity_Lipschitz_new}. \EO{Let $\vartheta := \min \{ s , \tfrac{1}{2}\}$.} If, in addition, \ref{eq:assumption_on_a_phi} holds with $\mathfrak{r}$ \EO{replaced by $\mathfrak{v} := n/s$}, then we have the $L^2(\Omega)$-error estimates:
\begin{align}
\| u - u_{\T} \|_{L^2(\Omega)} \lesssim h_{\T}^{\vartheta + \frac{1}{2}}| \log h_{\T} |^{\frac{3}{2}+\zeta} \Lambda(f,a),
\,\,\,\,
s \in [\tfrac{1}{4},\tfrac{1}{2}),
\label{eq:error_estimate_semilinear_s_final_L2_1412}
\\
\| u - u_{\T} \|_{L^2(\Omega)} \lesssim h_{\T}^{\vartheta + \frac{1}{2}}| \log h_{\T} |^{2\left(\frac{3}{2} + \zeta\right)}\Sigma(f,a),
\,\,\,\,
s = \tfrac{1}{2},
\label{eq:error_estimate_semilinear_s_final_L2_12}
\\
\| u - u_{\T} \|_{L^2(\Omega)} \lesssim h_{\T}^{\vartheta + \frac{1}{2}}| \log h_{\T} |
\Sigma(f,a),
\,\,\,\,
s \in (\tfrac{1}{2},1).
\label{eq:error_estimate_semilinear_s_final_L2_121}
\end{align}
In all estimates the hidden constant is independent of $u$, $u_{\T}$, and $h_{\T}$.
\label{thm:error_estimates_state_equation}
\end{theorem}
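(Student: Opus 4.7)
The proof breaks naturally into three parts matching the structure of the statement, and my plan is to handle them in the order they are stated.

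First I would establish the quasi-best approximation \eqref{eq:error_estimate_semilinear_s} by a C\'ea-type argument adapted to the monotone nonlinearity. Subtracting \eqref{eq:discrete_semilinear_pde} from \eqref{eq:weak_semilinear_pde_integral} on test functions in $\V(\T)$ gives Galerkin orthogonality up to the nonlinear term. Given an arbitrary $v_\T \in \V(\T)$, I would set $w_\T := v_\T - u_\T$, substitute $u - u_\T = (u - v_\T) + w_\T$ in $\|u - u_\T\|_s^2 + (a(\cdot,u) - a(\cdot,u_\T), u - u_\T)_{L^2(\Omega)}$, kill the $w_\T$-terms via orthogonality, and drop the nonnegative nonlinear contribution on the left by assumption \ref{A2}. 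The remaining mixed nonlinear term would be bounded using \ref{eq:assumption_on_a_phi}: H\"older with exponents $(n/2s, 2n/(n-2s), 2n/(n-2s))$ and the Sobolev embedding $\tilde H^s(\Omega) \hookrightarrow L^{2n/(n-2s)}(\Omega)$ from Proposition \ref{pro:Sobolev_embedding} give $|(a(\cdot,u) - a(\cdot,u_\T), u - v_\T)_{L^2(\Omega)}| \lesssim \|\phi\|_{L^{n/2s}(\Omega)} \|u - u_\T\|_s \|u - v_\T\|_s$, and division by $\|u - u_\T\|_s$ yields \eqref{eq:error_estimate_semilinear_s}.

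Next, the rates \eqref{eq:error_estimate_semilinear_s_final_1412}--\eqref{eq:error_estimate_semilinear_s_final_121} would follow by plugging a Scott--Zhang or Lagrange quasi-interpolant of $u$ into \eqref{eq:error_estimate_semilinear_s}. Theorem \ref{thm:regularity_space_state_equation_integral_Sobolev} provides $u \in H^{s + 1/2 - \epsilon}(\Omega)$ (with $2\epsilon$ in place of $\epsilon$ at $s = 1/2$) together with explicit $\epsilon$-dependent bounds carrying prefactors $\epsilon^{-1/2}$, $\epsilon^{-1/2-\zeta}$, or $\epsilon^{-1}$. Invoking the interpolation estimate of \cite[Eq.~(3.17)]{MR4283703}, which takes the form $\|u - v_\T\|_s \lesssim |\log h_\T|^{1/2} h_\T^{1/2 - \epsilon}\|u\|_{H^{s+1/2-\epsilon}(\Omega)}$, I would then optimize $\epsilon \sim 1/|\log h_\T|$ so that $h_\T^{-\epsilon}$ is absorbed into a constant and $\epsilon^{-\alpha}$ becomes $|\log h_\T|^{\alpha}$, producing each of the three stated rates.

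The $L^2$ estimates \eqref{eq:error_estimate_semilinear_s_final_L2_1412}--\eqref{eq:error_estimate_semilinear_s_final_L2_121} would be obtained by an Aubin--Nitsche duality argument linearized via the standard mean-value trick: let $w \in \tilde H^s(\Omega)$ solve
\[
\mathcal{A}(v,w) + (\mathsf{d}\,w, v)_{L^2(\Omega)} = (u - u_\T, v)_{L^2(\Omega)} \quad \forall v \in \tilde H^s(\Omega), \qquad \mathsf{d}(x) := \int_0^1 \tfrac{\partial a}{\partial u}\bigl(x, u_\T(x) + t(u(x) - u_\T(x))\bigr)\,\mathrm{d}t,
\]
which is well-posed since $\mathsf{d} \geq 0$. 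Testing with $v = u - u_\T$, noting that $a(\cdot,u) - a(\cdot,u_\T) = \mathsf{d}\cdot(u - u_\T)$, and subtracting a quasi-interpolant $w_\T$ of $w$ via Galerkin orthogonality yields $\|u - u_\T\|_{L^2(\Omega)}^2 = \mathcal{A}(u - u_\T, w - w_\T) + (\mathsf{d}(u-u_\T), w - w_\T)_{L^2(\Omega)}$. The strengthened hypothesis $\phi \in L^{n/s}(\Omega)$ is exactly what enables H\"older with exponents $(n/s, 2n/(n-2s), 2)$ to control the nonlinear term by $\|u - u_\T\|_{L^2(\Omega)}\|w - w_\T\|_s$. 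Applying the linear version of Theorem \ref{thm:regularity_space_state_equation_integral_Sobolev} to $w$ (whose right-hand side $u - u_\T$ lies in $L^2(\Omega)$) together with the interpolation estimate, and combining with the $\|\cdot\|_s$-bound from Step 2, produces the claimed rates after a second $\epsilon$-optimization. The exponent $\vartheta + 1/2$ on $h_\T$ reflects the extra $h_\T^{\vartheta}$ gained from the regularity of $w$. The main obstacle is the delicate bookkeeping at $s = 1/2$, where both $u$ and $w$ carry the extra $\zeta$-factor and the duality effectively squares the logarithmic loss to $2(3/2 + \zeta)$; extracting the correct exponents requires a separate, careful optimization in each of the three regularity regimes $s \in [1/4,1/2)$, $s = 1/2$, and $s \in (1/2,1)$.
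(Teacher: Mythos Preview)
Your three-part plan---monotonicity plus Galerkin orthogonality plus H\"older/Sobolev for the C\'ea bound, regularity from Theorem~\ref{thm:regularity_space_state_equation_integral_Sobolev} plus Scott--Zhang interpolation plus $\epsilon\sim|\log h_{\T}|^{-1}$ for the energy rates, and an Aubin--Nitsche duality with the linearized coefficient $\mathsf{d}$ for the $L^2$ bounds---matches the paper's argument in structure and in every essential idea. In the duality step the paper takes the Galerkin projection $\mathfrak{z}_{\T}$ of the dual solution (then applies the quasi-best approximation to it) rather than a quasi-interpolant, and it bounds the nonlinear remainder with the symmetric H\"older split $(n/2s,\mathfrak{q},\mathfrak{q})$, $\mathfrak{q}=2n/(n-2s)$, instead of your asymmetric $(n/s,\mathfrak{q},2)$; both choices work, and the hypothesis $\phi\in L^{n/s}$ is used in the paper only to ensure $\chi\mathfrak{z}\in L^2(\Omega)$ so that Proposition~\ref{pro:state_regularity_Lipschitz_new} applies to the dual problem.

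The one point to correct is the form of the interpolation estimate in Step~2. The paper does \emph{not} use a single estimate with a built-in $|\log h_{\T}|^{1/2}$: for $s\neq\tfrac12$ it passes from $\|\cdot\|_s$ to $\|\cdot\|_{H^s(\Omega)}$ via $\tilde H^s(\Omega)=H_0^s(\Omega)$ \cite[Theorem~3.33]{McLean} and then applies a log-free local Scott--Zhang bound, so the entire logarithmic factor comes from the $\epsilon^{-\xi}$ in the regularity estimate---this is what yields $|\log h_{\T}|^{1/2}$ for $s>\tfrac12$ and $|\log h_{\T}|$ for $s\in[\tfrac14,\tfrac12)$. At $s=\tfrac12$ that identification fails, and the paper instead invokes the Hardy-type bound \cite[ineq.~(3.11)]{MR4283703}, which carries an explicit $\delta^{-1}h_{\T}^{-\delta}$; optimizing \emph{both} $\delta$ and $\epsilon$ produces the exponent $\tfrac32+\zeta$. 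Using your stated form uniformly would overshoot the log exponents in the $s\neq\tfrac12$ cases by one half.
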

\begin{proof}
The proof of \eqref{eq:error_estimate_semilinear_s} follows from the monotonicity of \EO{$a = a(x,u)$} in the second variable, Galerkin orthogonality, assumption \ref{eq:assumption_on_a_phi}, and the Sobolev embedding \EO{of Proposition \ref{pro:Sobolev_embedding}}, $H^s(\Omega) \hookrightarrow L^{\mathfrak{q}}(\Omega)$, which holds for every $\mathfrak{q} \in [1, 2n/(n-2s)]$:
\begin{align*}
\| u - u_{\T} \|_s^2 
& 
\leq 
\mathcal{A}(u - u_{\T},u - u_{\T})  
+ 
(a(\cdot,u) - a(\cdot,u_{\T}), u - u_{\T})_{L^2(\Omega)}
\\
& =  
\mathcal{A}(u - u_{\T},u - v_{\T})  + (a(\cdot,u) - a(\cdot,u_{\T}), u - v_{\T})_{L^2(\Omega)}
\\
&
\leq
\EO{\| u - u_{\T} \|_s  \| u - v_{\T}\|_s + \| \phi \|_{L^{\mathfrak{r}}(\Omega)}\| u - u_{\T} \|_{L^{\mathfrak{q}}(\Omega)}\| u - v_{\T} \|_{L^{\mathfrak{q}}(\Omega)}}
\\
& 
\leq \| u - u_{\T} \|_s  \| u - v_{\T}\|_s 
[ 1 
+
C
\| \phi  \|_{L^{\EO{\mathfrak{r}}}(\Omega)} 
], 
\quad
v_{\T} \in \mathbb{V}(\T), 
\quad
 C > 0.
\end{align*}
\EO{Here, $\mathfrak{r} = n/2s$ and $\mathfrak{q} = 2n/(n-2s)$. Observe that $\mathfrak{r}^{-1} + \mathfrak{q}^{-1} + \mathfrak{q}^{-1} = 1$.}

Assume now that $\Omega$ satisfies an exterior ball condition for $s < \tfrac{1}{2}$ so that we have at hand the results of Theorem \ref{thm:regularity_space_state_equation_integral_Sobolev}. Let $s \in (0,1) \setminus \{ \tfrac{1}{2} \}$. To obtain   \eqref{eq:error_estimate_semilinear_s_final_1412} and \eqref{eq:error_estimate_semilinear_s_final_121}, we bound $\| u - v_{\T} \|_s$ in \eqref{eq:error_estimate_semilinear_s} on the basis of two ingredients. The first one is the bound that is used to prove that $\tilde H^s(\Omega) = H_0^s(\Omega)$ for $s \in (0,1) \setminus \{ \tfrac{1}{2} \}$ \cite[Theorem 3.33]{McLean}:
$
\| u -v_{\T} \|_s \lesssim \| u - v_{\T} \|_{H^s(\Omega)}
$
for all $v_{\T} \in \mathbb{V}(\T)$
and
$
s \in (0,1) \setminus \{ \tfrac{1}{2} \}.
$
The second ingredient is the localization of fractional order Sobolev seminorms  \cite{MR1752263,MR1930387}:
\[
|v|^2_{H^s(\Omega)} \leq \sum_{T} \left[ \int_{T} \int_{S_T} \frac{|v(x) - v(y)|^2}{|x-y|^{n+2s}} \mathrm{d}y \mathrm{d}x + \frac{\mathfrak{c}(n,\sigma)}{ s h_T^{2s} } \| v \|^2_{L^2(T)} \right], 
\quad
s \in (0,1),
\]
for $v \in H^s(\Omega)$. Here, \EO{$S_T = \cup \{ T' \in \T: T' \cap T \neq \emptyset \}$, $\mathfrak{c}(n,\sigma)$ denotes a positive constant, and $\sigma$ is the shape regularity coefficient of the family $\mathbb{T}$.} With these two ingredients at hand, the rest of the proof relies on utilizing interpolation error estimates for the Scott--Zhang operator \cite[Proposition 3.6]{MR3893441}, \cite[Proposition 3.1]{MR4283703} on the basis of the regularity results obtained in Theorem \ref{thm:regularity_space_state_equation_integral_Sobolev}. Since the regularity estimate \eqref{eq:reg_u_s_121} depends on $\epsilon$ as $\epsilon^{-1/2}$, we obtain, for every $\epsilon \in (0,\tfrac{1}{2})$, the error estimate
\[
\| u - u_{\T} \|_s \lesssim h_{\T}^{\frac{1}{2}} \epsilon^{-\frac{1}{2}} h_{\T}^{-\epsilon}
\| f - a(\cdot,0) \|_{L^2(\Omega)}, 
\qquad s \in (\tfrac{1}{2},1),
% \quad \epsilon \in (0,s+\tfrac{1}{2}),
\]
with a hidden constant independent of $\epsilon$. \EO{In view of $a^{\frac{1}{\ln a}} = e$ for $a\in \mathbb{R}_{+} \setminus \{1\}$,} we thus set $\epsilon = |\log h_{\T}|^{-1}$ to arrive at the error estimate \eqref{eq:error_estimate_semilinear_s_final_121}. The error estimate \eqref{eq:error_estimate_semilinear_s_final_1412} follows similar arguments upon utilizing the regularity estimate \eqref{eq:reg_u_s_141}. Let us now analyze the special case $s = \tfrac{1}{2}$ and derive the error estimate \eqref{eq:error_estimate_semilinear_s_final_12}. To accomplish this task, we utilize a fractional Hardy inequality and an interpolation error estimate for the Scott--Zhang operator: Let $t \in (\tfrac{1}{2},1)$ and $\delta \in (0,t-\tfrac{1}{2})$, then \cite[inequality (3.11)]{MR4283703}
\begin{equation}
 \| v -\Pi_{\T} v \|_{\frac{1}{2}} \lesssim \delta^{-1} h_{\T}^{t - \frac{1}{2} - \delta}| v |_{H^{t}(\Omega)},
 \quad v \in H^t(\Omega).
 \label{eq:no_Hardy}
\end{equation}
We thus invoke the regularity estimate \eqref{eq:reg_u_s_12} and utilize the previous estimate with $v = u$ and $t = 1 - \EO{2}\epsilon$
% , where $\varepsilon > 0$ is arbitrarily small, 
to conclude that
\begin{equation}
\begin{aligned}
 \| u - u_{\T} \|_{\frac{1}{2}}  
\lesssim h_{\T}^{\frac{1}{2}} \delta^{-1} h_{\T}^{-\delta} \epsilon^{-\frac{1}{2} -\zeta} h_{\T}^{-\EO{2\epsilon}} 
% \| f - a(\cdot,0) \|_{L^{2}(\Omega)}
\EO{\Sigma(f,a)}
 \lesssim 
 h_{\T}^{\frac{1}{2}}|\log h_{\T}|^{\frac{3}{2} + \zeta} 
% \| f - a(\cdot,0) \|_{L^{2}(\Omega)} 
\EO{\Sigma(f,a)},
 \end{aligned}
\end{equation}
upon taking $\varepsilon =  \delta = |\log h_{\T}|^{-1}$.
 
The error estimate in $L^2(\Omega)$ follows from duality. Define 
$ 0 \leq  \chi \in L^{\frac{n}{s}}(\Omega)$ by
\[
\chi(x) = \frac{a(x,u(x)) - a(x,u_{\T}(x))}{u(x) - u_{\T}(x)}
~\mathrm{if}~u(x) \neq u_{\T}(x),
\quad
\chi(x) = 0 
~\mathrm{if}~u(x) = u_{\T}(x).
\]
Let $\mathfrak{z} \in \tilde H^s(\Omega)$ be the solution to
$
\mathcal{A}(v,\mathfrak{z}) + (\chi \mathfrak{z} ,v)_{L^2(\Omega)} = \langle \mathfrak{f} , v \rangle
%\quad \forall v \in \tilde H^s(\Omega). 
$
for all $v \in \tilde H^s(\Omega)$; $\mathfrak{f} \in H^{-s}(\Omega)$. Let $\mathfrak{z}_{\T}$ be the finite element approximation of $\mathfrak{z}$ within $\mathbb{V}(\T)$. \EO{Thus, 
% an estimate taken from the proof of \cite[Theorem 5.2]{MR4358465} reveals that
% \begin{equation}
% \langle \mathfrak{f} , u - u_{\T} \rangle 
% % & = \mathcal{A}(u-\mathsf{u}_h,\mathfrak{z}) + (\chi \mathfrak{z}, u - \mathsf{u}_h)_{L^2(\Omega)} = \mathcal{A}(u-\mathsf{u}_h,\mathfrak{z} - \mathfrak{z}_h)
% % + \mathcal{A}(u-\mathsf{u}_h,\mathfrak{z}_h)
% % \\
% % & + (\chi \mathfrak{z}, u - \mathsf{u}_h)_{L^2(\Omega)} 
% % = \mathcal{A}(u-\mathsf{u}_h,\mathfrak{z} - \mathfrak{z}_h)
% % + (a(\cdot,u) - a(\cdot,\mathsf{u}_h), \mathfrak{z} - \mathfrak{z}_h)_{L^2(\Omega)} 
% % \\
% \leq
% \| u - u_{\T} \|_s \| \mathfrak{z} - \mathfrak{z}_{\T} \|_s + \| \phi \|_{L^{\frac{n}{2s}}(\Omega)} \| u - u_{\T} \|_{L^{q}(\Omega)} \| \mathfrak{z} - \mathfrak{z}_{\T} \|_{L^{q}(\Omega)},
% \label{eq:aux_for_estimate_in_L2}
% \end{equation}
\begin{multline}
\langle \mathfrak{f} , u - u_{\T} \rangle  = \mathcal{A}(u-u_{\T},\mathfrak{z}) + (\chi \mathfrak{z}, u - u_{\T})_{L^2(\Omega)} = 
\mathcal{A}(u-u_{\T},\mathfrak{z} - \mathfrak{z}_{\T})
\\
 + \mathcal{A}(u-u_{\T},\mathfrak{z}_{\T})
+ (\chi \mathfrak{z}, u - u_{\T})_{L^2(\Omega)} 
= \mathcal{A}(u-u_{\T},\mathfrak{z} - \mathfrak{z}_{\T})
+ (a(\cdot,u) - a(\cdot,u_{\T}), \mathfrak{z} - \mathfrak{z}_{\T})_{L^2(\Omega)} 
\\
\leq
\| u - u_{\T} \|_s \| \mathfrak{z} - \mathfrak{z}_{\T} \|_s + \| \phi \|_{L^{\mathfrak{r}}(\Omega)} \| u - u_{\T} \|_{L^{\mathfrak{q}}(\Omega)} \| \mathfrak{z} - \mathfrak{z}_{\T} \|_{L^{\mathfrak{q}}(\Omega)},
\label{eq:aux_for_estimate_in_L2}
\end{multline}
where $\mathfrak{r} = n/2s$ and $\mathfrak{q} = 2n/(n-2s)$.}
% where $q = 2n/(n-2s)$.  
Set $\mathfrak{f} = u - u_{\T} \in L^2(\Omega)$. 
% Notice that, 
Since $\phi \in L^{\frac{n}{s}}(\Omega)$, $\chi  \mathfrak{z} \in L^2(\Omega)$. We thus invoke Proposition \ref{pro:state_regularity_Lipschitz_new} to obtain the regularity estimate
\begin{equation}
 \| \mathfrak{z} \|_{H^{s+\theta - \epsilon}(\Omega)}  \lesssim \epsilon^{-\xi}\|  u - u_{\T} \|_{L^2(\Omega)}
\quad
\forall 0<\epsilon<s,
\quad
\theta = \min \{ s -\epsilon, \tfrac{1}{2}\},
\label{eq:f=u-uh}
\end{equation}
where $\xi = \tfrac{1}{2} $ if $s \in (\tfrac{1}{2},1)$ and $\xi = \tfrac{1}{2} + \zeta$ if $s \in (0, \tfrac{1}{2}]$.  If $s \neq \tfrac{1}{2}$, we thus obtain
% the following error estimate:
% \[
% \| u - u_{\T} \|^2_{L^2(\Omega)} \lesssim  \| u - u_{\T} \|_s \| \mathfrak{z} - \mathfrak{z}_{\T} \|_s \lesssim h_{\T}^{ \frac{1}{2}} | \log h_{\T} | h_{\T}^{s+\theta-\epsilon} \epsilon^{-\nu}\| u - \mathsf{u}_h  \|_{L^{2}(\Omega)},
% \]
\[
 \| \mathfrak{z} - \mathfrak{z}_{\T} \|_s 
 \lesssim 
 h_{\T}^{\theta-\epsilon} |\mathfrak{z}|_{H^{s+\theta-\epsilon}(\Omega)} 
 \lesssim
 \epsilon^{-\xi} h_{\T}^{\theta-\epsilon} \| u - u_{\T}  \|_{L^{2}(\Omega)}.
\]
Set $\epsilon = |\log h_{\T}|^{-1}$ to conclude that
$
  \| \mathfrak{z} - \mathfrak{z}_{\T} \|_s \lesssim
   h_{\T}^{\vartheta}| \log h_{\T}|^{\xi} \| u - \EO{u_{\T}}  \|_{L^{2}(\Omega)},
$
where $\vartheta = \min \{s,\tfrac{1}{2} \}$. We now invoke \eqref{eq:aux_for_estimate_in_L2} and the bound \eqref{eq:error_estimate_semilinear_s_final_121} to obtain, for $s \in (\tfrac{1}{2},1)$,
\begin{equation*}
\begin{aligned}
 \| u - u_{\T} \|^2_{L^2(\Omega)} & \lesssim  \| u - u_{\T} \|_s \| \mathfrak{z} - \mathfrak{z}_{\T} \|_s 
 \lesssim h_{\T}^{ \frac{1}{2}+\vartheta} | \log h_{\T} | \| u - u_{\T}  \|_{L^{2}(\Omega)}
% \left[ 
\EO{\Sigma(f,a).}
% \| f - a(\cdot,0) \|_{L^2(\Omega)}.
%+ \| f \|_{L^r(\Omega)}
%\right].
 \end{aligned}
\end{equation*}
The case $s \in [\tfrac{1}{4},\tfrac{1}{2}]$ follows similar arguments. This concludes the proof.
\end{proof}

\begin{remark}[error estimates]
\rm
The energy-norm error bounds \eqref{eq:error_estimate_semilinear_s_final_1412}-- \eqref{eq:error_estimate_semilinear_s_final_121} improve the ones recently obtained in \cite[Theorem 5.2, estimate (5.6)]{MR4358465}: the factor \EO{$h^{-\epsilon}$} in \cite[estimate (5.6)]{MR4358465}, where $\epsilon >0$ is arbitrarily small, has been removed. We also mention that the derived error estimates are in agreement with respect to regularity. The $L^2(\Omega)$-norm error estimates  \eqref{eq:error_estimate_semilinear_s_final_L2_1412}--\eqref{eq:error_estimate_semilinear_s_final_L2_121} read, up to logarithm factors, as follows: 
\[
\| u - u_{\T} \|_{L^2(\Omega)} \lesssim h_{\T}^{s + \frac{1}{2}},
\quad
s \in [\tfrac{1}{4},\tfrac{1}{2}],
\qquad
\| u - u_{\T} \|_{L^2(\Omega)} \lesssim h_{\T},
\quad
s \in (\tfrac{1}{2},1).
\]
Both bounds improve the one derived in \cite[Theorem 5.2, estimate (5.7)]{MR4358465}. In addition, if $s \in [\tfrac{1}{4},\tfrac{1}{2}]$, the error bound is in agreement with respect to regularity. In contrast, when $s \in (\tfrac{1}{2},1)$,  the derived error bound is suboptimal with respect to regularity. To conclude, we notice that the error bounds of \cite[Theorem 5.2]{MR4358465} hold under the assumption that $\partial \Omega \in C^{\infty}$. We improve upon them by assuming that $\Omega$ is a Lipschitz polytope that additionally satisfies an exterior ball condition when $s < \tfrac{1}{2}$.
\end{remark}

We now present error estimates for $s \in (0,\tfrac{1}{2})$ that are suboptimal in terms of regularity. When $s \in [\tfrac{1}{4},\tfrac{1}{2})$ the derived error estimates \EO{hold} under weaker regularity assumptions that the ones stated in Theorem \ref{thm:error_estimates_state_equation}.

\begin{theorem}[error estimates]
Let $n \geq 2$, $s \in (0,\tfrac{1}{2})$, and $r>n/2s$. Let $\Omega \subset \mathbb{R}^n$ be a bounded Lipschitz domain. Assume that $a$ is as in the statement of Theorem \ref{thm:stata_equation_well_posedness_integral} and satisfies, in addition, \ref{eq:assumption_on_a_phi}. If, in addition, $a = a(x,u)$ is locally Lipschitz in $u$, uniformly for $x \in \Omega$, $a(\cdot,0) \in L^{2}(\Omega)$ and $f \in L^2(\Omega) \cap L^r(\Omega)$, then we have the following a priori error estimate in \EO{the} energy-norm:
\begin{equation}
\| u - u_{\T} \|_{s}  \lesssim h_{\T}^{s} | \log h_{\T} |^{\frac{1}{2}+\zeta} \| f - a(\cdot,0) \|_{L^2(\Omega)},
\qquad
s \in (0,\tfrac{1}{2})\EO{.}
\label{eq:error_estimate_semilinear_s_final_1412_2}
\end{equation} 
Here, $\zeta$ is as in the statement of Proposition \ref{pro:state_regularity_Lipschitz_new}. If, in addition, \ref{eq:assumption_on_a_phi} holds with \EO{$\mathfrak{r}$ replaced by $\mathfrak{v} = n/s$}, then we have the following a priori error estimate in $L^2(\Omega)$:
\begin{equation}
\| u - u_{\T} \|_{L^2(\Omega)} \lesssim h_{\T}^{2s}| \log h_{\T} |^{2\left(\frac{1}{2}+\zeta\right)} \| f - a(\cdot,0) \|_{L^2(\Omega)},
\qquad
s \in (0,\tfrac{1}{2}).
\label{eq:error_estimate_semilinear_s_final_L2_1412_2}
\end{equation}
In both estimates the hidden constant is independent of $u$, $u_{\T}$, and $h_{\T}$.
\label{thm:error_estimates_state_equation_2}
\end{theorem}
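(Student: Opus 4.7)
The plan is to follow closely the structure of the proof of Theorem \ref{thm:error_estimates_state_equation}, but replace the Sobolev regularity input of Theorem \ref{thm:regularity_space_state_equation_integral_Sobolev} with the weaker (yet assumption-light) estimate of Theorem \ref{thm:regularity_space_state_equation_integral_SobolevII}, which does not require an exterior ball condition when $s\in(0,\tfrac12)$.

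For the energy-norm estimate \eqref{eq:error_estimate_semilinear_s_final_1412_2}, I would first invoke the quasi-best approximation bound \eqref{eq:error_estimate_semilinear_s}, which already holds under the present hypotheses. Since $s\in(0,\tfrac12)$, we have $\tilde H^s(\Omega)=H_0^s(\Omega)$, so $\|u-v_{\T}\|_s \lesssim \|u-v_{\T}\|_{H^s(\Omega)}$ for every $v_{\T}\in\mathbb V(\T)$. Combining the localization identity for fractional Sobolev seminorms used in the proof of Theorem \ref{thm:error_estimates_state_equation} with the Scott--Zhang interpolation estimate on shape-regular meshes, and applying Theorem \ref{thm:regularity_space_state_equation_integral_SobolevII} with $v_{\T}=\Pi_{\T}u$, one obtains, for every $\epsilon\in(0,s)$,
\begin{equation*}
\|u-u_{\T}\|_s \;\lesssim\; h_{\T}^{s-2\epsilon}\,|u|_{H^{2s-2\epsilon}(\Omega)} \;\lesssim\; h_{\T}^{s-2\epsilon}\,\epsilon^{-\tfrac{1}{2}-\zeta}\,\|f-a(\cdot,0)\|_{L^2(\Omega)},
\end{equation*}
with a hidden constant independent of $\epsilon$. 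Choosing $\epsilon=|\log h_{\T}|^{-1}$ (so that $h_{\T}^{-2\epsilon}\sim 1$ via $a^{1/\ln a}=e$), one arrives at \eqref{eq:error_estimate_semilinear_s_final_1412_2}.

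For the $L^2(\Omega)$ estimate \eqref{eq:error_estimate_semilinear_s_final_L2_1412_2}, I would replicate the duality argument from the proof of Theorem \ref{thm:error_estimates_state_equation}. Define $\chi\geq 0$ as the difference-quotient coefficient associated to $a(\cdot,u)-a(\cdot,u_{\T})$, which belongs to $L^{n/s}(\Omega)$ by the stronger version of \ref{eq:assumption_on_a_phi}, and let $\mathfrak{z}\in\tilde H^s(\Omega)$ solve the linear adjoint problem with right-hand side $\mathfrak f=u-u_{\T}\in L^2(\Omega)$ and zeroth-order coefficient $\chi$. Since $\chi\mathfrak{z}\in L^2(\Omega)$, Proposition \ref{pro:state_regularity_Lipschitz_new} yields $\|\mathfrak{z}\|_{H^{2s-2\epsilon}(\Omega)}\lesssim \epsilon^{-\tfrac12-\zeta}\|u-u_{\T}\|_{L^2(\Omega)}$. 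The same Scott--Zhang interpolation step as above produces $\|\mathfrak{z}-\mathfrak{z}_{\T}\|_s \lesssim h_{\T}^{s}|\log h_{\T}|^{\tfrac12+\zeta}\|u-u_{\T}\|_{L^2(\Omega)}$. Inserting this and \eqref{eq:error_estimate_semilinear_s_final_1412_2} into the inequality \eqref{eq:aux_for_estimate_in_L2} (which is derived exactly as in the previous theorem via Galerkin orthogonality and H\"older with exponents $\mathfrak r^{-1}+2\mathfrak q^{-1}=1$) and dividing by $\|u-u_{\T}\|_{L^2(\Omega)}$ yields \eqref{eq:error_estimate_semilinear_s_final_L2_1412_2}.

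No step presents a genuine obstacle; the main bookkeeping concern is to ensure the exponents align. In particular, one must verify that for $\epsilon<s/2$ the target regularity exponent $2s-2\epsilon$ strictly exceeds $s$, so that the Scott--Zhang estimate in $H^s$ is admissible, and that the blow-up $\epsilon^{-1/2-\zeta}$ is precisely compensated by the log-balance $\epsilon=|\log h_{\T}|^{-1}$, giving the advertised logarithmic factors. Because we work with $s<\tfrac12$, no special treatment of the critical case $s=\tfrac12$ or invocation of fractional Hardy inequalities is needed, and no exterior ball condition on $\Omega$ is required thanks to Theorem \ref{thm:regularity_space_state_equation_integral_SobolevII}.
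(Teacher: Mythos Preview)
Your proposal is correct and follows exactly the approach indicated in the paper: repeat the argument of Theorem \ref{thm:error_estimates_state_equation} but substitute the regularity input of Theorem \ref{thm:regularity_space_state_equation_integral_SobolevII} for that of Theorem \ref{thm:regularity_space_state_equation_integral_Sobolev}. The paper's own proof is a one-line reference to this substitution, and your write-up faithfully unpacks the details (quasi-best approximation, Scott--Zhang interpolation with exponent $2s-2\epsilon$, the choice $\epsilon=|\log h_{\T}|^{-1}$, and the duality argument with $\chi\in L^{n/s}(\Omega)$), including the observation that the restriction $\epsilon<s/2$ is needed so that $2s-2\epsilon>s$.
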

\begin{proof}
The proof follows the arguments elaborated in the proof of Theorem \ref{thm:error_estimates_state_equation} but now utilizing the regularity results of Theorem \ref{thm:regularity_space_state_equation_integral_SobolevII}. 
% For brevity, we skip details.
\end{proof}

\subsubsection{Error estimates on suitable graded meshes}
Let us assume that we have at hand a family of meshes $\{ \T \}$ of $\bar \Omega$ such that, in addition to shape regularity, $\{ \T \}$ satisfies a suitable mesh refinement near the boundary of $\Omega$ \cite{MR3893441,MR4283703}: Given a mesh parameter $h>0$,  there is a number $\mu \geq 1$ such that for every $T \in \T$
\begin{equation}
h_T \leq C(\sigma) h^{\mu}
\textrm{ if }
T \cap \partial \Omega \neq \emptyset,
\quad
h_T \leq C(\sigma) h \mathrm{dist}(T,\partial \Omega)^{(\mu-1)/\mu}
\textrm{ if }
T \cap \partial \Omega = \emptyset.
\label{eq:graded_meshes_state_equation}
\end{equation}
Here, $C(\sigma)$ denotes a constant that only depends on the shape regularity coefficient $\sigma$ of  $\{ \T \}$. The number of degrees of freedom $\mathscr{N}$ of the corresponding finite element space $\mathbb{V}(\T)$ can be related to the discretization parameter $h$ as follows \cite[(3.13)]{MR4283703}:
\[
\mathscr{N} \approx h^{-n} 
\textrm{ if }
\mu < \tfrac{n}{n-1},
\quad
\mathscr{N} \approx h^{-n} |\log h|
\textrm{ if }
\mu = \tfrac{n}{n-1},
\quad
\mathscr{N} \approx h^{(1-n)\mu} 
\textrm{ if }
\mu > \tfrac{n}{n-1}.
\]
If $\mu \leq n/(n-1)$, $h$ and $\mathscr{N}$ satisfy the optimal relation $h \approx N^{-\frac{1}{n}}$ (up to a logarithmic factor if $\mu = n/(n-1)$.)

We now present error estimates on the graded meshes dictated by \eqref{eq:graded_meshes_state_equation} that improve the ones obtained in Theorem \ref{thm:error_estimates_state_equation} for shape regular families of conforming and simplicial triangulations. Since we will utilize the results of Theorem \ref{thm:regularity_space_state_equation_integral_Holder}, we will assume that $\Omega$ satisfies an exterior ball condition.

\begin{theorem}[error estimates on graded meshes]
Let $n \geq 2$, $s \in (0,1)$, and let $\Omega \subset \mathbb{R}^n$ be a bounded Lipschitz domain satisfying an exterior ball condition. Let $\mu = n/(n-1)$ be the parameter that dictates the mesh refinement \eqref{eq:graded_meshes_state_equation}, and let $\beta_{\star} = n/(2(n-1)) - s$. Assume that $a$ is as in the statement of Theorem \ref{thm:stata_equation_well_posedness_integral}.
%and satisfies, in addition, \eqref{eq:assumption_on_a_phi}. 
If, in addition, $s \geq n/(4(n-1))$, 
%$a = a(x,u)$ is locally Lipschitz in $u$, uniformly for $x \in \Omega$, 
$a(\cdot,0) \in L^{\infty}(\Omega)$, 
$a$ satisfies \eqref{eq:Nemitskii} with $\varrho = \beta_{\star}$, and $f \in C^{\beta}(\bar \Omega)$, where $\beta \geq \beta_{\star}$, then we have the following error \EO{bounds}
% a priori error estimate in energy-norm:
\begin{align}
\| u - u_{\T} \|_{s}  
& 
\lesssim h^{\tfrac{n}{2(n-1)}} | \EO{\log h} |^{\upsilon},
\quad
s \in \left[ \tfrac{n}{4(n-1)},1\right),
\label{eq:graded_estimate_s}
\\
\| u - u_{\T} \|_{L^2(\Omega)} 
&
\lesssim h^{\tfrac{n}{2(n-1)}+\vartheta}| \EO{\log h} |^{\wp},
\quad
s \in \left[ \tfrac{n}{4(n-1)},1\right),
\quad
\vartheta = \min \{s,\tfrac{1}{2}\}.
\label{eq:graded_estimate_2}
\end{align} 
where $\upsilon = 1$ if $s \neq \frac{1}{2}$ and $\upsilon = 2$ if $s = \frac{1}{2}$, $\EO{\wp} = \tfrac{3}{2}$ if $s > \tfrac{1}{2}$, $\EO{\wp} = \tfrac{5}{2} + \zeta$ if $s = \tfrac{1}{2}$, and $\EO{\wp} = \tfrac{3}{2} + \zeta$ if $s < \tfrac{1}{2}$.
% and $\vartheta = \min \{s,\tfrac{1}{2}\}$. 
The constant $\zeta$ is as in the statement of Proposition \ref{pro:state_regularity_Lipschitz_new}. In both estimates the hidden constant is independent of $u$, $u_{\T}$, and $h_{\T}$.
\label{thm:error_estimates_state_equation_graded}
\end{theorem}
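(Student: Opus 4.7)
My approach is to mimic the proof of Theorem \ref{thm:error_estimates_state_equation}, but replace the shape-regular interpolation estimates by their graded-mesh analogues from \cite[Section 3]{MR4283703}. The strategy rests on three pieces: (i) use Theorem \ref{thm:regularity_space_state_equation_integral_Holder} to control $f - a(\cdot,u)$ in $C^{\beta_\star}(\bar\Omega)$; (ii) invoke the graded-mesh Scott--Zhang interpolation error estimate \cite[estimate (3.17)]{MR4283703}, whose applicability is precisely what forces the restriction $s \geq n/(4(n-1))$; and (iii) derive the $L^2(\Omega)$ bound by a duality argument analogous to the one in Theorem \ref{thm:error_estimates_state_equation}.

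For Step 1 (regularity), since $\Omega$ satisfies the exterior ball condition, $a(\cdot,0) \in L^\infty(\Omega)$, and $f \in C^{\beta}(\bar \Omega) \hookrightarrow L^\infty(\Omega)$, Theorem \ref{thm:regularity_space_state_equation_integral_Holder} yields $u \in C^s(\mathbb{R}^n)$ with $\| u \|_{C^s(\mathbb{R}^n)} \lesssim \| f - a(\cdot,0) \|_{L^\infty(\Omega)}$. The hypothesis $s \geq n/(4(n-1))$ guarantees $\beta_\star = n/(2(n-1)) - s \leq s$, so that the Nemitskii assumption \eqref{eq:Nemitskii} with $\varrho = \beta_\star$ (see Remark \ref{rk:Nemitskii}) gives $a(\cdot,u) \in C^{\beta_\star}(\bar\Omega)$. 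Combined with $\beta \geq \beta_\star$, this yields $f - a(\cdot,u) \in C^{\beta_\star}(\bar\Omega)$ with a controlled norm.

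For Step 2 (energy norm bound), I would interpret $u$ as the solution of the \emph{linear} problem $(-\Delta)^s u = f - a(\cdot,u)$ and combine the quasi-best approximation property \eqref{eq:error_estimate_semilinear_s} with a graded-mesh interpolation estimate. Choosing $v_\T = \Pi_\T u$, I appeal to the seminorm localization \cite{MR1752263,MR1930387} and to \cite[estimate (3.17)]{MR4283703}, which exploits the boundary-layer regularity $u(x) \approx \mathrm{dist}(x,\partial\Omega)^s v(x)$ via the grading $\mu = n/(n-1)$. This delivers $\| u - \Pi_\T u \|_s \lesssim h^{n/(2(n-1))}|\log h|^{\upsilon}$, hence \eqref{eq:graded_estimate_s}. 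The exponent $\upsilon$ differs in the case $s = \tfrac{1}{2}$ because the standard identification $\tilde H^s(\Omega) = H_0^s(\Omega)$ fails there; one compensates by using a graded-mesh analogue of \eqref{eq:no_Hardy}, which contributes one extra logarithmic factor.

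For Step 3 ($L^2$ bound), I would reuse the duality construction from the proof of Theorem \ref{thm:error_estimates_state_equation}: define $\chi$ pointwise and let $\mathfrak{z} \in \tilde H^s(\Omega)$ solve the adjoint equation with right-hand side $\mathfrak{f} = u - u_\T \in L^2(\Omega)$. Proposition \ref{pro:state_regularity_Lipschitz_new} gives $\mathfrak{z} \in H^{s + \theta - \epsilon}(\Omega)$ with $\theta = \min\{s-\epsilon, \tfrac{1}{2}\}$ and an $\epsilon^{-\xi}$ constant; applying the graded-mesh interpolation estimate to $\mathfrak{z} - \mathfrak{z}_\T$, inserting this into the analogue of \eqref{eq:aux_for_estimate_in_L2}, combining with the energy bound \eqref{eq:graded_estimate_s}, and balancing $\epsilon = |\log h|^{-1}$ produces \eqref{eq:graded_estimate_2} with the stated exponent $\wp$ in each regime. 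The main obstacle is the bookkeeping of logarithmic factors across the three cases $s \in [n/(4(n-1)), \tfrac{1}{2})$, $s = \tfrac{1}{2}$, $s \in (\tfrac{1}{2}, 1)$; in particular, the case $s = \tfrac{1}{2}$ forces one extra $|\log h|$ in both the energy and $L^2$ estimates, as is visible in the parameters $\upsilon$ and $\wp$.
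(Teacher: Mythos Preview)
Your proposal is correct and follows essentially the same approach as the paper. The only noteworthy difference is in Step 3: whereas you redo the duality argument of Theorem \ref{thm:error_estimates_state_equation} explicitly on graded meshes, the paper simply observes that once $f - a(\cdot,u) \in C^{\beta_\star}(\bar\Omega)$ is established, the $L^2(\Omega)$-estimate can be read off directly from \cite[Proposition 3.10]{MR4283703} (which packages that duality argument for the linear problem on graded meshes); also, the paper notes at the outset that the Nemitskii condition \eqref{eq:Nemitskii} with $\varrho = \beta_\star$ implies \ref{eq:assumption_on_a_phi}, which is what justifies invoking the quasi-best approximation \eqref{eq:error_estimate_semilinear_s}.
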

\begin{proof}
We begin the proof by noticing that, since $a$ satisfies \eqref{eq:Nemitskii} with $\varrho = \beta_{\star}$, then \ref{eq:assumption_on_a_phi} holds. We can thus invoke the best approximation result \eqref{eq:error_estimate_semilinear_s} of Theorem \ref{thm:error_estimates_state_equation} to  immediately deduce the error bound
$
\| u - u_{\T} \|_{s} \lesssim  \| u - \Pi_{\T} u\|_{s},
$
where $\Pi_{\T}$ denotes the Scott--Zhang operator. The desired error estimate \eqref{eq:graded_estimate_s} is thus a consequence of \cite[estimate (3.14)]{MR4283703} and \cite[Theorem 3.5]{MR4283703} upon obtaining that $f - a(\cdot,u) \in C^{\beta_{\star}}(\bar \Omega)$. To accomplish this task, we invoke the regularity results of Theorem \ref{thm:regularity_space_state_equation_integral_Holder} to deduce that $u \in C^s(\mathbb{R}^n)$. Since $a$ satisfies \eqref{eq:Nemitskii} with $\varrho = \beta_{\star}$, the arguments in Remark \ref{rk:Nemitskii} reveal that $a(\cdot,u) \in C^{\beta_{\star}}(\bar \Omega)$. Notice that $s \geq \beta_{\star} = n/(2(n-1)) -s$ because $s \geq n/(4(n-1))$. Consequently, $f - a(\cdot,u) \in C^{\gamma}(\bar \Omega)$, where $\gamma = \min \{ \beta, \beta_{\star} \} = \beta_{\star}$.

With the previous regularity result at hand, the error estimate \eqref{eq:graded_estimate_2} in $L^2(\Omega)$ follows from \cite[Proposition 3.10]{MR4283703}. This concludes the proof.
\end{proof}

\subsection{A convergence property}
\EO{Let $\Omega$ be a bounded Lipschitz domain. Assume that $a$ is as in the statement of Theorem \ref{thm:stata_equation_well_posedness_integral} and satisfies, in addition, \ref{eq:assumption_on_a_phi}. Let $u \in \tilde H^s(\Omega)$ be the solution to \eqref{eq:weak_semilinear_pde_integral}, and let $\mathfrak{u}_{\T}  \in \mathbb{V}(\T)$ be the solution to \eqref{eq:discrete_semilinear_pde} with $f$ replaced by $f_{\T} \in L^{r}(\Omega)$, with $r>n/2s$. Then, we have \cite[Proposition 5.3]{MR4358465}}
\begin{equation}
\EO{f_{\T} \rightharpoonup f \textrm{ in }L^{r}(\Omega)
\implies
\mathfrak{u}_{\T} \rightarrow u \textrm{ in } L^{q}(\Omega),
\qquad
h_{\T} \downarrow 0,
\qquad
q \leq 2n/(n-2s).}
\label{eq:convergence property}
\end{equation}

%%%%%%%%%%%%%%%%%%%%%%%%%%%%%%%%%%%%%%%%%%%%%%%%%%%%%%%%%%%%%%%%%%%%%%%%%%%%
%%%%%%%%%%%%%%%%%%%%%%%%%%%%%%%%%%%%%%%%%%%%%%%%%%%%%%%%%%%%%%%%%%%%%%%%%%%%
\section{Finite element approximation of the adjoint equation}
\label{sec:fem_adjoint}
%%%%%%%%%%%%%%%%%%%%%%%%%%%%%%%%%%%%%%%%%%%%%%%%%%%%%%%%%%%%%%%%%%%%%%%%%%%%
%%%%%%%%%%%%%%%%%%%%%%%%%%%%%%%%%%%%%%%%%%%%%%%%%%%%%%%%%%%%%%%%%%%%%%%%%%%%
We introduce the following finite element approximation of \eqref{eq:adj_eq_integral}: Find $q_{\T} \in \V(\T)$ such that
\begin{equation}
 \label{eq:discrete_adjoint}
  \mathcal{A} (v_{\T},q_{\T})  + \left( \frac{\partial a}{\partial u}(\cdot,u) q_{\T}, v_{\T} \right)_{L^2(\Omega)} 
  = 
  \left(\frac{\partial L}{\partial u} (\cdot,u), v_{\T} \right)_{L^2(\Omega)} \quad \forall v_{\T} \in \V(\T),
\end{equation}
where $u \in \tilde H^s(\Omega) \cap L^{\infty}(\Omega)$ corresponds to the solution to \eqref{eq:weak_st_eq_integral}. We observe  that assumption \textnormal{\ref{B2}} guarantees that  $\partial L/ \partial u(\cdot,u) \in L^r(\Omega)$ for $r>n/2s$ while assumption \textnormal{\ref{A2}} reveals that  $\partial a/\partial u (x,u) \geq 0$ for a.e.~$x \in \Omega$ and for all $u \in \mathbb{R}$. The existence and uniqueness of a discrete solution $q_{\T} \in \mathbb{V}(\T)$ to problem \eqref{eq:discrete_adjoint} is thus immediate.

Since \EO{it will be useful, we present the following Galerkin orthogonality property:}
\begin{equation}
\mathcal{A}(v_{\T},p - q_{\T}) + \left( \frac{\partial a}{\partial u}(\cdot,u) (p - q_{\T}), v_{\T} \right)_{L^2(\Omega)} = 0 
\qquad
\forall v_{\T} \in \mathbb{V}(\T). 
\label{eq:Galerkin_adjoint}
\end{equation}

To \EO{present error estimates concisely, we define}
\[
 \EO{\Upsilon(L):= \left\| \frac{\partial L}{\partial u} (\cdot,\bar u) \right\|_{L^2( \Omega)},
 \quad
 \Psi(L,a):=\left\| \frac{\partial L}{\partial u} (\cdot,\bar u) \right\|_{C^{\frac{1}{2}-s}(\bar\Omega)}\left[ 1 + \| \bar{z} - a(\cdot,0) \|_{L^{\infty}(\Omega)} \right].}
\]

\begin{theorem}[error estimates]
\EO{Let $n \geq 2$, $s \in [\tfrac{1}{4},1)$, and let $\Omega$ be a bounded Lipschitz domain} such that it satisfies an exterior ball condition for $s<1/2$. Assume that \ref{A1}--\ref{A3}, \ref{B1}--\ref{B2}, \ref{C1}, and \ref{D1}--\ref{D2} hold.  Let $p \in \tilde H^s(\Omega)$ be the solution to \eqref{eq:adj_eq_integral}, and let $q_{\T} \in \mathbb{V}(\T)$ be the solution to the discrete problem \eqref{eq:discrete_adjoint}. Then, we have the following a priori error estimates in \EO{the} energy-norm:
\begin{align}
\| p - q_{\T} \|_{s} & \lesssim h_{\T}^{\frac{1}{2} } | \log h_{\T} |\EO{\Psi(L,a)},
\qquad
s \in [\tfrac{1}{4},\tfrac{1}{2}),
\label{eq:estimate_adjoint_equation_s_final_1412}
\\
\| p - q_{\T} \|_{s} & \lesssim  h_{\T}^{\frac{1}{2}}|\log h_{\T}|^{\frac{3}{2} + \zeta}\EO{\Upsilon(L)},
\qquad
s = \tfrac{1}{2},
\label{eq:estimate_adjoint_equation_s_final_12}
\\
\| p - q_{\T} \|_{s} & \lesssim h_{\T}^{\frac{1}{2} } | \log h_{\T} |^{\frac{1}{2}}\EO{\Upsilon(L)},
\qquad
s \in (\tfrac{1}{2},1).
\label{eq:estimate_adjoint_equation_s_final_121}
\end{align} 
Here, $\zeta$ is as in the statement of Proposition \ref{pro:state_regularity_Lipschitz_new}. \EO{Let $\vartheta = \min \{ s , \tfrac{1}{2}\}$.} We also have the following a priori error estimates in $L^2(\Omega)$:
\begin{align}
\| p - q_{\T} \|_{L^2(\Omega)} & \lesssim h_{\T}^{\vartheta + \frac{1}{2}}| \log h_{\T} |^{\frac{3}{2}+\zeta}\EO{\Psi(L,a)},
\qquad
s \in [\tfrac{1}{4},\tfrac{1}{2}).
\label{eq:estimate_adjoint_equation_s_final_L2_1412}
\\
\| p - q_{\T} \|_{L^2(\Omega)} & \lesssim h_{\T}^{\vartheta + \frac{1}{2}}| \log h_{\T} |^{2\left(\frac{3}{2} + \zeta\right)}\EO{\Upsilon(L)},
\qquad
s = \tfrac{1}{2}.
\label{eq:estimate_adjoint_equation_s_final_L2_12}
\\
\| p - q_{\T} \|_{L^2(\Omega)} & \lesssim h_{\T}^{\vartheta + \frac{1}{2}}| \log h_{\T} |\EO{\Upsilon(L)},
\qquad
s \in (\tfrac{1}{2},1).
\label{eq:estimate_adjoint_equation_s_final_L2_121}
\end{align}
In all estimates, the hidden constant is independent of  $p$, $q_h$, and $h_{\T}$.
\label{thm:error_estimates_adjoint_equation}
\end{theorem}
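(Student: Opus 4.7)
My plan is to mirror the proof of Theorem~\ref{thm:error_estimates_state_equation} almost line by line, exploiting the fact that, given $\bar u$, the adjoint equation \eqref{eq:adj_eq_integral} is \emph{linear} in $p$, with a bounded, nonnegative reaction coefficient $\partial a/\partial u(\cdot,\bar u) \in L^{\infty}(\Omega)$ (assumptions \ref{A2} and \ref{A3}).

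First I would establish the quasi-best approximation bound
$\| p - q_{\T} \|_s \lesssim \| p - v_{\T} \|_s$ for all $v_{\T} \in \mathbb{V}(\T)$.
Testing \eqref{eq:Galerkin_adjoint} and using the symmetry of $\mathcal{A}$,
\begin{equation*}
\| p - q_{\T} \|_s^2 + \left(\tfrac{\partial a}{\partial u}(\cdot,\bar u)(p-q_{\T}),p-q_{\T}\right)_{L^2(\Omega)} = \mathcal{A}(p-q_{\T},p-v_{\T}) + \left(\tfrac{\partial a}{\partial u}(\cdot,\bar u)(p-q_{\T}),p-v_{\T}\right)_{L^2(\Omega)}.
\end{equation*}
Cauchy--Schwarz, the $L^{\infty}$ bound on $\partial a/\partial u$, Young's inequality, and the continuous embedding $\tilde H^s(\Omega) \hookrightarrow L^2(\Omega)$ absorb the lower-order term and yield the desired quasi-optimality.

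Next, I would insert into this estimate the Scott--Zhang interpolant $\Pi_{\T} \bar p$ and use the regularity for $\bar p$ from Theorem~\ref{thm:regularity_space_Sobolev}, i.e.\ the bounds \eqref{eq:reg_p_s121}, \eqref{eq:reg_p_s12}, and \eqref{eq:reg_p_1412}. The calculation is then identical to that in Theorem~\ref{thm:error_estimates_state_equation}: for $s \in (\tfrac12,1)$ and $s \in [\tfrac14,\tfrac12)$ one applies the standard Scott--Zhang estimate \cite[Prop.~3.1]{MR4283703}, whereas for $s=\tfrac12$ one must instead invoke the fractional Hardy-based estimate \eqref{eq:no_Hardy} with parameters $t=1-2\epsilon$ and $\delta$. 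In each case the $\epsilon^{-\xi}$-factor from the regularity estimate (with $\xi \in \{1/2,\, 1/2+\zeta,\, 1\}$ depending on the regime) combines with the $h^{-\epsilon}$-factor from the Scott--Zhang bound, and choosing $\epsilon = |\log h_{\T}|^{-1}$ (and, for $s=\tfrac12$, also $\delta = |\log h_{\T}|^{-1}$) produces precisely the logarithmic powers and constants $\Upsilon(L)$ or $\Psi(L,a)$ advertised in \eqref{eq:estimate_adjoint_equation_s_final_1412}--\eqref{eq:estimate_adjoint_equation_s_final_121}.

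For the $L^2$-estimates I would use Aubin--Nitsche duality. Introduce $\mathfrak{z} \in \tilde H^s(\Omega)$ as the solution of
\begin{equation*}
\mathcal{A}(v,\mathfrak{z}) + \left(\tfrac{\partial a}{\partial u}(\cdot,\bar u)\,\mathfrak{z},v\right)_{L^2(\Omega)} = (p-q_{\T}, v)_{L^2(\Omega)} \qquad \forall v \in \tilde H^s(\Omega),
\end{equation*}
with discrete counterpart $\mathfrak{z}_{\T} \in \mathbb{V}(\T)$. Taking $v = p-q_{\T}$, using symmetry of $\mathcal{A}$, and subtracting the Galerkin identity \eqref{eq:Galerkin_adjoint} tested at $v_{\T} = \mathfrak{z}_{\T}$ gives
\begin{equation*}
\| p - q_{\T} \|_{L^2(\Omega)}^2 = \mathcal{A}(p-q_{\T}, \mathfrak{z} - \mathfrak{z}_{\T}) + \left(\tfrac{\partial a}{\partial u}(\cdot,\bar u)(p-q_{\T}), \mathfrak{z} - \mathfrak{z}_{\T}\right)_{L^2(\Omega)} \lesssim \| p - q_{\T} \|_s \| \mathfrak{z} - \mathfrak{z}_{\T} \|_s.
\end{equation*}
Because the right-hand side of the dual problem lies in $L^2(\Omega)$ and $\partial a/\partial u(\cdot,\bar u) \in L^{\infty}(\Omega)$, Proposition~\ref{pro:state_regularity_Lipschitz_new} applies to $\mathfrak{z}$ exactly as it did to the auxiliary variable in the proof of Theorem~\ref{thm:error_estimates_state_equation}, producing $\|\mathfrak{z} - \mathfrak{z}_{\T}\|_s \lesssim h_{\T}^{\vartheta}|\log h_{\T}|^{\xi}\|p-q_{\T}\|_{L^2(\Omega)}$ with $\vartheta = \min\{s,1/2\}$. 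Combining with the energy bounds just established yields \eqref{eq:estimate_adjoint_equation_s_final_L2_1412}--\eqref{eq:estimate_adjoint_equation_s_final_L2_121}.

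The main obstacle, as in Theorem~\ref{thm:error_estimates_state_equation}, is the endpoint $s=\tfrac12$: the usual Scott--Zhang bound in $\tilde H^{1/2}$ breaks down, so one must balance two small parameters ($\epsilon$ and $\delta$) in the Hardy-type estimate \eqref{eq:no_Hardy} and also pay the factor $\epsilon^{-1/2-\zeta}$ from Proposition~\ref{pro:state_regularity_Lipschitz_new}, which is what produces the higher log power $|\log h_{\T}|^{3/2+\zeta}$ in \eqref{eq:estimate_adjoint_equation_s_final_12} and its doubling in \eqref{eq:estimate_adjoint_equation_s_final_L2_12}. A secondary subtlety is simply that assumptions \ref{C1} and \ref{D1}--\ref{D2} must be invoked at the right moments to guarantee the regularity input from Theorem~\ref{thm:regularity_space_Sobolev}; conceptually nothing new happens once that regularity is in hand.
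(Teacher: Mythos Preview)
Your proposal is correct and follows essentially the same route as the paper: quasi-best approximation via Galerkin orthogonality and the sign condition $\partial a/\partial u\ge 0$, then the Scott--Zhang/regularity machinery of Theorem~\ref{thm:error_estimates_state_equation} combined with the adjoint regularity bounds \eqref{eq:reg_p_s121}, \eqref{eq:reg_p_s12}, \eqref{eq:reg_p_1412}, and finally Aubin--Nitsche duality for the $L^2$-estimates. The only cosmetic difference is that the paper derives the quasi-best approximation identity by two successive applications of \eqref{eq:Galerkin_adjoint} (first with $v_{\T}=q_{\T}$, then for general $v_{\T}$) before splitting $q_{\T}-v_{\T}=(q_{\T}-p)+(p-v_{\T})$, whereas you obtain the same inequality in a single, more direct step; both arguments land on the bound $\|p-q_{\T}\|_s^2 \le \|p-q_{\T}\|_s\|p-v_{\T}\|_s + C_{\mathfrak{m}}\|p-q_{\T}\|_{L^2(\Omega)}\|p-v_{\T}\|_{L^2(\Omega)}$.
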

\begin{proof}
\EO{In view of $\| p - q_{\T} \|_{s}^2 = \mathcal{A}(p - q_{\T},p - q_{\T})$, setting $v_{\T}$ as $q_{\T}$ in \eqref{eq:Galerkin_adjoint} yields
\begin{equation*}
\| p - q_{\T} \|_{s}^2 
=
 \mathcal{A}(p - q_{\T},p ) + \left(  \frac{\partial a}{\partial u}(\cdot,u) (p -q_{\T} ), q_{\T} \right)_{L^2(\Omega)}.
\end{equation*}
We now utilize Galerkin orthogonality again, but now as it is in \eqref{eq:Galerkin_adjoint}, to deduce}
\begin{equation*}
\| p - q_{\T} \|_{s}^2 
=
   \mathcal{A}(p - q_{\T},p -v_{\T}) +   \left(  \frac{\partial a}{\partial u}(\cdot,u) (p-q_{\T} ) ,q_{\T} - v_{\T} \right)_{L^2(\Omega)}
  \, \forall v_{\T} \in \mathbb{V}(\T).
\end{equation*}
Write $q_{\T} - v_{\T} = (q_{\T} - p) + (p-v_{\T})$, observe that $(\partial a/\partial u(\cdot,u) (p - q_{\T}), q_{\T} - p )_{L^2(\Omega)} \leq 0$, and utilize assumption \ref{A3} combined with the fact that $u \in L^{\infty}(\Omega)$ to obtain
\[
\| p - q_{\T} \|_{s}^2 \leq\| p - q_{\T} \|_{s} \| p - v_{\T} \|_{s} + C_{\mathfrak{m}}\| p - q_{\T} \|_{L^2(\Omega)} \| p - v_{\T} \|_{L^2(\Omega)},
\quad v_{\T} \in \mathbb{V}(\T).
\]
This bound allows us to obtain the quasi-best approximation property: $\| p - q_{\T} \|_{s} \lesssim \inf\{ \| p - v_{\T} \|_{s}: v_{\T} \in \V(\T)\}$. The energy-norm error estimates \eqref{eq:estimate_adjoint_equation_s_final_1412}--\eqref{eq:estimate_adjoint_equation_s_final_121} thus follow from similar arguments to the ones developed in the proof of Theorem \ref{thm:error_estimates_state_equation} upon utilizing the regularity estimates \eqref{eq:reg_p_s121}, \eqref{eq:reg_p_s12}, and \eqref{eq:reg_p_1412}.
% derived in the proof of Theorem \ref{thm:regularity_space_Sobolev}.
The $L^2(\Omega)$-norm error bounds \eqref{eq:estimate_adjoint_equation_s_final_L2_1412}--\eqref{eq:estimate_adjoint_equation_s_final_L2_121} follow from a duality argument. This concludes the proof.
\end{proof}

Let us now introduce $u_{\T} \in \mathbb{V}(\T)$ as the solution to \eqref{eq:discrete_semilinear_pde} with $f$ replaced by $z_{\T}$; $z_{\T}$ corresponds to an arbitrary piecewise constant function over the mesh $\T$. We also introduce the discrete function $p_{\T} \in \mathbb{V}(\T)$ as the solution to
\begin{equation}
 \label{eq:discrete_adjoint_2}
  \mathcal{A} (v_{\T}, p_{\T})  + \left( \frac{\partial a}{\partial u}(\cdot,u_{\T}) p_{\T}, v_{\T} \right)_{L^2(\Omega)} 
  = 
  \left(\frac{\partial L}{\partial u} (\cdot,u_{\T}), v_{\T} \right)_{L^2(\Omega)}
  % \quad
\end{equation}
for all $v_{\T} \in \V(\T)$. In what follows, we analyze error bounds for $p - p_{\T}$. To accomplish this task, we define $q$ as the solution to the weak problem: Find $q \in \tilde H^s(\Omega)$ such that
\begin{equation}
\label{eq:q}
  \mathcal{A} (v,q)  + \left( \frac{\partial a}{\partial u}(\cdot,u_{\T}) q, v \right)_{L^2(\Omega)} 
  = 
 \left(\frac{\partial L}{\partial u} (\cdot,u_{\T}), v \right)_{L^2(\Omega)} \quad \forall v \in \tilde H^s(\Omega).
\end{equation}

Since we are operating under \emph{local} assumptions on $a= a(x,u)$ and $L=L(x,u)$, i.e., assumptions that hold for $u$ on bounded intervals of $\mathbb{R}$, in what follows we assume that solutions $u_{\T}$ to problem \eqref{eq:discrete_semilinear_pde} are uniformly bounded in $L^{\infty}(\Omega)$, i.e.,
 \begin{equation}
 \exists C >0: \quad \| u_{\T} \|_{L^{\infty}(\Omega)} \leq C \quad \forall \T \in \mathbb{T}.
 \label{eq:u_h_bounded}
 \end{equation}
 
With \eqref{eq:u_h_bounded} at hand, the assumptions imposed on the data allow us to conclude that \eqref{eq:discrete_adjoint_2} and \eqref{eq:q} are well-posed. In particular, there exists a unique solution $q \in \tilde H^s(\Omega) \cap L^{\infty}(\Omega)$ to \eqref{eq:q}. If, for every $\mathfrak{m}>0$ and $u \in [-\mathfrak{m},\mathfrak{m}]$, $\partial L/\partial u(\cdot,u) \in L^2(\Omega)$, we can apply Proposition \ref{pro:state_regularity_Lipschitz_new} to deduce that 
$
q \in H^{s+\theta - \epsilon}(\Omega)
$
together with  
\begin{equation}
\begin{aligned}
   \| q \|_{H^{2s  -2\epsilon}(\Omega)} 
  &
  \lesssim 
  \epsilon^{-\frac{1}{2}-\zeta} \left\| \frac{\partial L}{\partial u} (\cdot,u_{\T}) \right\|_{L^2(\Omega)}, \quad s \in (0, \tfrac{1}{2}],
   \quad \forall 0<\epsilon<s,
   \\
  \| q \|_{H^{s + \frac{1}{2}-\epsilon}(\Omega)} 
  &
  \lesssim 
  \epsilon^{-\frac{1}{2}} \left\| \frac{\partial L}{\partial u} (\cdot,u_{\T}) \right\|_{L^2(\Omega)}, \quad s \in (\tfrac{1}{2},1),
   \quad \forall 0<\epsilon<s+\tfrac{1}{2}.
  \end{aligned}
 \label{eq:regularity_state_Lipschitz_new_p}
  \end{equation}
% Here, $\theta = \tfrac{1}{2}$ for $\tfrac{1}{2} < s < 1$ and $\theta = s -\epsilon > 0$ for $0 < s \leq \tfrac{1}{2}$. 
\EO{With \eqref{eq:regularity_state_Lipschitz_new_p} at hand, the arguments elaborated in the proof of Theorem \ref{thm:error_estimates_adjoint_equation} yield}
\begin{equation}
\| q - p_{\T} \|_s \lesssim h_{\T}^{\vartheta}|\log h_{\T}|^{\upsilon} \left \| \frac{\partial L}{\partial u} (\cdot,u_{\T}) \right \|_{L^2(\Omega)},
\quad
0<s<1,
\quad
\vartheta = \min \{ s , \tfrac{1}{2} \}.
\label{eq:q-ph}
\end{equation}
Here, $\upsilon = \tfrac{1}{2}$ if $s > \tfrac{1}{2}$, $\upsilon = \tfrac{3}{2} + \zeta$ if $s = \tfrac{1}{2}$, and $\upsilon = \tfrac{1}{2} + \zeta$ if $s < \tfrac{1}{2}$. $\zeta$ is as in the statement of Proposition \ref{pro:state_regularity_Lipschitz_new}. In addition, we have an error bound in $L^2(\Omega)$:
\begin{equation}
\| q - p_{\T} \|_{L^2(\Omega)} \lesssim h_{\T}^{2\vartheta} |\log h_{\T}|^{2\upsilon} \left \| \frac{\partial L}{\partial u} (\cdot,u_{\T}) \right\|_{L^2(\Omega)},
\quad
0<s<1.
\label{eq:q-ph_L2}
\end{equation}

A \EO{second ingredient within the analysis of error bounds for $p - p_{\T}$ is to introduce} 
% another auxiliary variable $y$, which is such that
\begin{equation}
\label{eq:mathsf_u}
y \in \tilde H^s(\Omega):
\quad
  \mathcal{A}(y,v)  +  \langle a(\cdot,y),v \rangle = \langle z_{\T} , v \rangle 
  \quad \forall v \in \tilde H^{s}(\Omega).
\end{equation}
The well-posedness of \eqref{eq:mathsf_u} follows from Theorem \ref{thm:stata_equation_well_posedness_integral}; observe that $z_{\T} \in L^{\infty}(\Omega)$ for every $\T \in \mathbb{T}$. In particular, we have that $y \in \tilde H^s(\Omega) \cap L^{\infty}(\Omega)$. If, for every $\mathfrak{m}>0$ and $u \in [-\mathfrak{m},\mathfrak{m}]$, $a(\cdot,u) \in L^2(\Omega)$, the fact that $z_{\T} \in L^{2}(\Omega)$, uniformly with respect to discretization, allows us to conclude the following regularity result: $y \in H^{s+\theta-\epsilon}(\Omega)$, where $\theta = \tfrac{1}{2}$ for $\tfrac{1}{2} < s < 1$ and $\theta = s -\epsilon > 0$ for $0 < s \leq \tfrac{1}{2}$.

% After introducing all these ingredients, 
We \EO{are now in position to derive error estimates.}
\begin{theorem}[error estimates]
Let $n \geq 2$ and $s \in (0,1)$. Let $\Omega$ be a bounded Lipschitz domain. Assume that \ref{A1}--\ref{A3}, \ref{B1}--\ref{B2},  
% \ref{eq:assumption_on_a_phi}, 
and \eqref{eq:u_h_bounded} hold. Assume, in addition, that, for every $\mathfrak{m}>0$ and $u \in [-\mathfrak{m},\mathfrak{m}]$,
\begin{equation}
 a(\cdot,u), 
 \,
 \frac{\partial L}{\partial u}(\cdot,u) \in L^2(\Omega)
 \label{eq:assumptions_on_a_and_L_in_L2}
\end{equation}
and that \EO{$\partial L/\partial u = \partial L/\partial u(x,u)$ is locally Lipschitz with respect to $u$.}
% the second variable. 
Let $p \in \tilde H^s(\Omega)$ be the solution to \eqref{eq:adj_eq_integral}, and let $p_{\T} \in \mathbb{V}(\T)$ be the solution to \eqref{eq:discrete_adjoint_2}. Then, we have 
% the following a priori error estimates in energy-norm:
\begin{equation}
\| p - p_{\T} \|_{s} \lesssim h_{\T}^{\vartheta}|\log h_{\T}|^{\upsilon} + \| z - z_{\T} \|_{L^2(\Omega)},
\qquad
\vartheta = \min \{ s , \tfrac{1}{2} \}.
\label{eq:estimate_adjoint_equation_s_2}
\end{equation}
If, in addition, $a$ satisfies \ref{eq:assumption_on_a_phi} \EO{with $\mathfrak{r}$ replaced by $\mathfrak{v} = n/s$,} then
\begin{equation}
\| p - p_{\T} \|_{L^2(\Omega)} \lesssim h_{\T}^{2\vartheta}|\log h_{\T}|^{2\upsilon} + \| z - z_{\T} \|_{L^2(\Omega)},
\qquad
\vartheta = \min \{ s , \tfrac{1}{2} \}.
\label{eq:estimate_adjoint_equation_s_3}
\end{equation}
Here, $\upsilon = \tfrac{1}{2}$ if $s > \tfrac{1}{2}$, $\upsilon = \tfrac{3}{2} + \zeta$ if $s = \tfrac{1}{2}$, and $\upsilon = \tfrac{1}{2} + \zeta$ if $s < \tfrac{1}{2}$. $\zeta$ is as in Proposition \ref{pro:state_regularity_Lipschitz_new}. In both estimates, the hidden constant is independent of $h_{\T}$.
\label{thm:error_estimates_adjoint_equation_2}
\end{theorem}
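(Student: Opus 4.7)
The plan is to interpose the continuous auxiliary function $q \in \tilde H^s(\Omega)$ from \eqref{eq:q} and work with the triangle inequality $\| p - p_{\T} \|_s \leq \| p - q \|_s + \| q - p_{\T} \|_s$. The second summand is already bounded by \eqref{eq:q-ph}, which provides precisely the $h_{\T}^{\vartheta}|\log h_{\T}|^{\upsilon}$ rate sought. Thus, the substantive work reduces to showing that $\| p - q\|_s$ is controlled by $\|u-u_{\T}\|_{L^2(\Omega)}$, and then that the latter is at worst $h_{\T}^{2\vartheta}|\log h_{\T}|^{c} + \|z-z_{\T}\|_{L^2(\Omega)}$.

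For the first reduction, I would subtract \eqref{eq:q} from \eqref{eq:adj_eq_integral} and regroup the reaction coefficient around $u_{\T}$, obtaining
\begin{equation*}
\mathcal{A}(v, p - q) + \left(\tfrac{\partial a}{\partial u}(\cdot, u_{\T})(p - q), v\right)_{L^2(\Omega)} = \left(\tfrac{\partial L}{\partial u}(\cdot, u) - \tfrac{\partial L}{\partial u}(\cdot, u_{\T}), v\right)_{L^2(\Omega)} + \left(\bigl[\tfrac{\partial a}{\partial u}(\cdot, u_{\T}) - \tfrac{\partial a}{\partial u}(\cdot, u)\bigr] p, v\right)_{L^2(\Omega)}.
\end{equation*}
Testing with $v = p - q$, using $\partial a/\partial u(\cdot, u_{\T}) \geq 0$ from \ref{A2}, the Poincar\'e-type embedding $\|v\|_{L^2(\Omega)} \lesssim \|v\|_s$ for $v \in \tilde H^s(\Omega)$, the local Lipschitz property of $\partial a/\partial u$ (implied by the $L^\infty$ bound on $\partial^2 a/\partial u^2$ in \ref{A3} together with the uniform $L^{\infty}$-bounds \eqref{eq:u_h_bounded} on $u_{\T}$ and $u \in L^\infty(\Omega)$), the hypothesized local Lipschitz property of $\partial L/\partial u$, and the fact that $p \in L^{\infty}(\Omega)$, I obtain $\|p - q\|_s \lesssim \|u - u_{\T}\|_{L^2(\Omega)}$.

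To bound $\|u - u_{\T}\|_{L^2(\Omega)}$, I introduce the auxiliary state $y \in \tilde H^s(\Omega)$ from \eqref{eq:mathsf_u} and split $u - u_{\T} = (u - y) + (y - u_{\T})$. For the first piece, subtracting the continuous state equations for $u$ and $y$, testing with $u - y$, invoking the monotonicity of $a$ from \ref{A2}, and applying Poincar\'e yield $\|u - y\|_{L^2(\Omega)} \lesssim \|z - z_{\T}\|_{L^2(\Omega)}$. For the finite element error $\|y - u_{\T}\|_{L^2(\Omega)}$, the $L^2$-estimates of Theorem \ref{thm:error_estimates_state_equation} (when $s \in [\tfrac{1}{4}, 1)$) or of Theorem \ref{thm:error_estimates_state_equation_2} (when $s \in (0, \tfrac{1}{4})$) yield $\|y - u_{\T}\|_{L^2(\Omega)} \lesssim h_{\T}^{2\vartheta}|\log h_{\T}|^{c}$. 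Since $h_{\T}^{2\vartheta}|\log h_{\T}|^c \leq h_{\T}^{\vartheta} |\log h_{\T}|^{\upsilon}$ for small $h_{\T}$, assembling the pieces delivers \eqref{eq:estimate_adjoint_equation_s_2}. For \eqref{eq:estimate_adjoint_equation_s_3} the same decomposition in $L^2(\Omega)$ applies: $\|q - p_{\T}\|_{L^2(\Omega)}$ is controlled directly by \eqref{eq:q-ph_L2}, while the bound $\|p - q\|_{L^2(\Omega)} \lesssim \|p - q\|_s$ combined with the preceding estimate produces the required $h_{\T}^{2\vartheta}|\log h_{\T}|^{2\upsilon}$ rate.

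The main obstacle is verifying that the hypotheses of Theorem \ref{thm:error_estimates_state_equation}/\ref{thm:error_estimates_state_equation_2} actually hold when the forcing is $z_{\T}$, which is only piecewise constant and therefore not H\"older continuous. Specifically, for $s \in [\tfrac{1}{4}, \tfrac{1}{2})$ the estimate of Theorem \ref{thm:error_estimates_state_equation} would naively require $z_{\T} \in C^{1/2 - s}(\bar\Omega)$, which fails; the fix is to fall back on Theorem \ref{thm:error_estimates_state_equation_2}, whose rate $h_{\T}^{2s}$ coincides with $h_{\T}^{2\vartheta}$ in that regime. Careful bookkeeping of the three regimes $s<\tfrac{1}{2}$, $s=\tfrac{1}{2}$, $s>\tfrac{1}{2}$ is then needed to aggregate the logarithmic factors into the precise exponent $\upsilon$ appearing in the statement.
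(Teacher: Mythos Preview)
Your proposal is correct and follows essentially the same route as the paper: triangle inequality via the auxiliary $q$ from \eqref{eq:q}, the Lipschitz-based bound $\|p-q\|_s \lesssim \|u-u_{\T}\|_{L^2(\Omega)}$, and the further splitting $u-u_{\T}=(u-y)+(y-u_{\T})$ with $y$ from \eqref{eq:mathsf_u}. The only cosmetic difference is that the paper regroups the reaction coefficient around $u$ rather than $u_{\T}$ (so that $q$ rather than $p$ appears on the right), and for the bound on $\|y-u_{\T}\|_{L^2(\Omega)}$ it appeals directly to the \emph{arguments} in the proofs of Theorems \ref{thm:error_estimates_state_equation} and \ref{thm:error_estimates_adjoint_equation} (using only the $L^2$-regularity of $z_{\T}$ via Proposition \ref{pro:state_regularity_Lipschitz_new}), which is precisely the fallback you identified to circumvent the unavailable $C^{1/2-s}$ regularity of $z_{\T}$.
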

\begin{proof}
We follow the proof of \cite[Theorem 6.2]{MR4358465} and bound $\| p - p_{\T} \|_{s}$ as follows: $\| p - p_{\T} \|_{s} \leq \| p - q \|_{s} + \| q - p_{\T} \|_{s}$, where $q$ denotes the solution to \eqref{eq:q}. \EO{The control of $\| q - p_{\T} \|_{s}$ follows from \eqref{eq:q-ph}: 
% In fact, we have 
$\| q - p_{\T} \|_{s} \lesssim h_{\T}^{\vartheta}|\log h_{\T}|^{\upsilon}$.}
% where $\upsilon$ and $\theta$ are as in the statement of the theorem. 
It thus suffices to bound $\| p - q \|_{s}$. To accomplish this task, let us first observe that, for every $v \in \tilde H^s(\Omega)$,
\begin{multline*}
p- q \in \tilde H^s(\Omega):
\quad
  \mathcal{A} (v,p-q)  + \left( \frac{\partial a}{\partial u}(\cdot,u) (p-q), v \right)_{L^2(\Omega)} 
 \\
  = 
   \left( \left[ \frac{\partial a}{\partial u}(\cdot,u_{\T}) -  \frac{\partial a}{\partial u}(\cdot,u) \right]q, v \right)_{L^2(\Omega)} 
   +
  \left( \frac{\partial L}{\partial u} (\cdot,u) - \frac{\partial L}{\partial u} (\cdot,u_{\T}), v \right)_{L^2(\Omega)}.
%  \quad
%  \forall v \in \tilde H^s(\Omega).
\end{multline*}
% for all $v \in \tilde H^s(\Omega)$. 
Set $v = p-q \in \tilde H^s(\Omega)$ and use that \EO{$\partial a/\partial u = \partial a/\partial u(x,u)$ and $ \partial L/\partial u = \partial L/\partial u(x,u)$} are locally Lipschitz in $u$ to obtain
$
\| p - q \|_s \lesssim \| u - u_{\T} \|_{L^2(\Omega)} [ 1 + \| q \|_{L^{\infty}(\Omega)}].
$

We \EO{now bound $\| u - u_{\T} \|_{L^2(\Omega)}$ on the basis of similar arguments}: $ \| u - u_{\T} \|_{L^2(\Omega)} \leq \| u - y \|_{L^2(\Omega)} + \| y - u_{\T} \|_{L^2(\Omega)}$, where $y$ denotes the solution to \eqref{eq:mathsf_u}. Since $z_{\T} \in L^2(\Omega)$, uniformly with respect to discretization, and \eqref{eq:assumptions_on_a_and_L_in_L2} holds, we have at hand the regularity estimates \eqref{eq:regularity_state_Lipschitz_new} for $y$. \EO{These estimates and 
the arguments elaborated in the proofs of Theorems \ref{thm:error_estimates_state_equation} and \ref{thm:error_estimates_adjoint_equation} yield $\| y - u_{\T} \|_{L^2(\Omega)} \lesssim h_{\T}^{2\vartheta}|\log h_{\T}|^{2\upsilon}$.}
% An application of Theorem \ref{thm:error_estimates_state_equation} yields the control of $\| y - u_h \|_{L^2(\Omega)}$. 
To bound $\| u - y \|_{L^2(\Omega)}$, we write the problem that 
$u - y $ solves and derive a stability estimate on the basis of \ref{A1}--\ref{A3}:
$ \| u -y \|_{L^2(\Omega)} \lesssim \| z - z_{\T} \|_{L^2(\Omega)}$. A collection of the derived estimates yield \eqref{eq:estimate_adjoint_equation_s_2}. The proof of \eqref{eq:estimate_adjoint_equation_s_3} follows similar arguments.
\end{proof}

%%%%%%%%%%%%%%%%%%%%%%%%%%%%%%%%%%%%%%%%%%%%%%%%%%%%%%%%%%%%%%%%%%%%%%%%%%%%%
%%%%%%%%%%%%%%%%%%%%%%%%%%%%%%%%%%%%%%%%%%%%%%%%%%%%%%%%%%%%%%%%%%%%%%%%%%%%%
\section{Finite element approximation of the fractional control problem}
\label{sec:fem_control}

We consider two strategies to discretize the optimal control problem \eqref{eq:min_integral}--\eqref{eq:weak_st_eq_integral}: a semidiscrete approach where \emph{the admisible control set is not discretized} and a fully discrete strategy where control variables are discretized with piecewise constant functions.

%%%%%%%%%%%%%%%%%%%%%%%%%%%%%%%%%%%%%%%%%%%%%%%%%%%%%%%%%%%%%%%%%%%%%%%%%%%%%
%%%%%%%%%%%%%%%%%%%%%%%%%%%%%%%%%%%%%%%%%%%%%%%%%%%%%%%%%%%%%%%%%%%%%%%%%%%%%
\subsection{A fully discrete scheme}
\label{sec:discrete_optimal_control_problem}
We consider the following fully discrete approximation of the PDE-constrained optimization problem \eqref{eq:min_integral}--\eqref{eq:weak_st_eq_integral}: Find
\begin{equation}\label{eq:min_discrete}
\min \{ J(u_{\T},z_{\T}): (u_{\T},z_{\T}) \in \mathbb{V}(\T) \times \mathbb{Z}_{ad}(\T) \}
\end{equation}
subject to the \emph{discrete state equation}
\begin{equation}\label{eq:weak_st_eq_discrete}
\mathcal{A}( u_{\T}, v_{\T})+\int_{\Omega} a(x,u_{\T}(x)) v_{\T}(x) \mathrm{d}x = \int_{\Omega} z_{\T}(x) v_{\T}(x) \mathrm{d}x \quad \forall v_{\T} \in \mathbb{V}(\T).
\end{equation}
Here, 
$
 \mathbb{Z}_{\mathrm{ad}}(\T) = \mathbb{Z}_{\mathrm{ad}} \cap  \mathbb{Z}(\T)
$
and 
$
 \mathbb{Z}(\T) = \left\{ v_{\T} \in L^{\infty}( \Omega ): v_{\T}|_T \in \mathbb{P}_0(T) \ \forall T \in \T  \right\}.
$

The existence of a solution follows standard arguments. \EO{To present first order optimality conditions, we introduce} $\mathcal{S}_{\T}: \mathbb{Z}(\T) \ni z_{\T} \mapsto u_{\T} \in \mathbb{V}(\T)$ and $j_{\T}(z_{\T}):= J(\mathcal{S}_{\T} z_{\T},z_{\T})$. If $\bar{z}_{\T}$ is a local minimum for \eqref{eq:min_discrete}--\eqref{eq:weak_st_eq_discrete}, then $(\bar u_{\T}, \bar p_{\T}, \bar z_{\T}) \in \mathbb{V}(\T) \times \mathbb{V}(\T) \times \mathbb{Z}_{ad}(\T)$ satisfies the optimality system
\begin{align}
  \mathcal{A} ( \bar u_{\T},v_{\T})  +  ( a(\cdot, \bar u_{\T}), v_{\T} )_{L^2(\Omega)} & = (\bar z_{\T}, v_{\T})_{L^2(\Omega)}  
 \label{eq:optimal_state_discrete}
\\
\mathcal{A}(v_{\T},\bar p_{\T}) +  \left(  \frac{\partial a}{\partial u}(\cdot, \bar u_{\T}) \bar p_{\T}, v_{\T} \right)_{L^2(\Omega)}  & = \left( \frac{\partial L}{\partial u}(\cdot,\bar u_{\T}), v_{\T} \right)_{L^2(\Omega)}
 \label{eq:optimal_adjoint_state_discrete}
 \\
 (\bar p_{\T} + \alpha \bar z_{\T}, z_{\T} - \bar z_{\T})_{L^2(\Omega)} & \geq 0 
 \quad
 \label{eq:variational_inequality_discrete}
\end{align}
\EO{for all $ v_{\T} \in \mathbb{V}({\T})$ and for all $z_{\T} \in \mathbb{Z}_{ad}(\T)$.}

%%%%%%%%%%%%%%%%%%%%%%%%%%%%%%%%%%%%%%%%%%%%%%%%%%%%%%%%%%%%%%%%%%%%%%%%%%%%%
%%%%%%%%%%%%%%%%%%%%%%%%%%%%%%%%%%%%%%%%%%%%%%%%%%%%%%%%%%%%%%%%%%%%%%%%%%%%%
\subsubsection{Convergence of discretizations}
\label{sec:convergence}
\EO{The following result improves upon \cite[Theorem 7.2]{MR4358465}: it requires weaker assumptions on $\Omega$ and $L$.}
\begin{theorem}[convergence]
\EO{Let $n \geq 2$ and $s \in (0,1)$. Let $\Omega$ be a bounded Lipschitz domain. Assume that \ref{A1}--\ref{A3} and \ref{B1}--\ref{B2} hold. Assume, in addition, that
% , for every $\mathfrak{m}>0$ and $u \in [-\mathfrak{m},\mathfrak{m}]$, \eqref{eq:assumptions_on_a_and_L_in_L2} holds. 
\eqref{eq:u_h_bounded} hold. Let $\bar z_{\T}$ be a global solution of \eqref{eq:min_discrete}--\eqref{eq:weak_st_eq_discrete} for $\T \in \mathbb{T}$. Then, there exist nonrelabeled subsequences $\{ \bar z_{\T} \}$ such that $\bar z_{\T} \mathrel{\ensurestackMath{\stackon[1pt]{\rightharpoonup}{\scriptstyle\ast}}} \bar{z}$ in $L^{\infty}(\Omega)$ as $h_{\T} \downarrow 0$, with $\bar z$ being a global solution of \eqref{eq:min_integral}--\eqref{eq:weak_st_eq_integral}. In addition, we have}
\begin{equation}
\label{eq:convergence}
\| \bar z - \bar z_{\T} \|_{L^{2}(\Omega)} \rightarrow 0,
\qquad
j_{\T}( \bar z_{\T}) \rightarrow j(\bar z), 
%\qquad
%h \downarrow 0.
\end{equation}
as $h_{\T} \downarrow 0$.
\label{thm:convergence}
\end{theorem}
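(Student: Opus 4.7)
The plan is to combine weak-$*$ compactness in $L^{\infty}(\Omega)$, the convergence property \eqref{eq:convergence property} for state discretizations, and a minimality comparison against suitably chosen admissible competitors to pass to the limit in the discrete problem.

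First I would extract a limit point of the control sequence. Since $\{\bar z_{\T}\}\subset \mathbb{Z}_{ad}$ is uniformly bounded in $L^{\infty}(\Omega)$, Banach--Alaoglu produces a nonrelabeled subsequence and $\bar z \in L^{\infty}(\Omega)$ with $\bar z_{\T} \mathrel{\ensurestackMath{\stackon[1pt]{\rightharpoonup}{\scriptstyle\ast}}} \bar z$ in $L^{\infty}(\Omega)$; convexity and weak-$*$ closedness of $\mathbb{Z}_{ad}$ give $\bar z \in \mathbb{Z}_{ad}$. This weak-$*$ limit is also a weak limit in every $L^{r}(\Omega)$, $r \in [1,\infty)$, so choosing $r > n/(2s)$ and invoking \eqref{eq:convergence property} together with \eqref{eq:u_h_bounded} yields $\bar u_{\T} = \mathcal{S}_{\T}\bar z_{\T} \rightarrow \mathcal{S}\bar z =: \bar u$ in $L^{q}(\Omega)$ for every $q \leq 2n/(n-2s)$.

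Next I would establish global optimality of $\bar z$ via a comparison argument. For an arbitrary $z \in \mathbb{Z}_{ad}$, let $z_{\T} \in \mathbb{Z}_{ad}(\T)$ denote the elementwise $L^{2}$-projection of $z$ onto $\mathbb{Z}(\T)$; this preserves the pointwise bounds $\mathfrak{a}\leq z_{\T}\leq \mathfrak{b}$ and satisfies $z_{\T}\rightarrow z$ in $L^{r}(\Omega)$ for every $r<\infty$ as $h_{\T}\downarrow 0$. Applying \eqref{eq:convergence property} again, $\mathcal{S}_{\T}z_{\T}\rightarrow \mathcal{S}z$ in $L^{q}(\Omega)$. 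Assumption \ref{B2} and the uniform $L^{\infty}$-boundedness of $\{\bar u_{\T}\}$ and $\{\mathcal{S}_{\T}z_{\T}\}$ allow me to invoke dominated convergence to pass to the limit in $\int_{\Omega} L(x,\cdot)\,\mathrm{d}x$, yielding $j_{\T}(z_{\T})\rightarrow j(z)$ and $\int_{\Omega} L(x,\bar u_{\T})\,\mathrm{d}x \rightarrow \int_{\Omega} L(x,\bar u)\,\mathrm{d}x$. Combining the global minimality $j_{\T}(\bar z_{\T}) \leq j_{\T}(z_{\T})$ with the weak lower semicontinuity of $\|\cdot\|_{L^{2}(\Omega)}^{2}$ along $\bar z_{\T} \rightharpoonup \bar z$ in $L^{2}(\Omega)$ would give
\[
j(\bar z) \leq \liminf_{h_{\T}\downarrow 0} j_{\T}(\bar z_{\T}) \leq \limsup_{h_{\T}\downarrow 0} j_{\T}(\bar z_{\T}) \leq j(z).
\]
Arbitrariness of $z$ then makes $\bar z$ a global minimizer of \eqref{eq:min_integral}--\eqref{eq:weak_st_eq_integral}, and specializing $z = \bar z$ gives $j_{\T}(\bar z_{\T}) \rightarrow j(\bar z)$.

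Finally, the strong $L^{2}$-convergence will follow from the Radon--Riesz principle: since $j_{\T}(\bar z_{\T})\rightarrow j(\bar z)$ and $\int_{\Omega} L(x,\bar u_{\T})\,\mathrm{d}x \rightarrow \int_{\Omega} L(x,\bar u)\,\mathrm{d}x$, the quadratic tracking terms satisfy $\|\bar z_{\T}\|_{L^{2}(\Omega)}\rightarrow \|\bar z\|_{L^{2}(\Omega)}$, and norm-plus-weak convergence in the Hilbert space $L^{2}(\Omega)$ upgrades to strong $L^{2}$-convergence. The main obstacle I anticipate is making sure the two applications of the convergence property \eqref{eq:convergence property} are legitimate in the weak $L^{r}$-topology with $r > n/(2s)$; this is fine because both $\{\bar z_{\T}\}$ and the projected sequences $\{z_{\T}\}$ are uniformly bounded in $L^{\infty}(\Omega)$ and possess the required weak (respectively strong) limits in such $L^{r}(\Omega)$, so \eqref{eq:convergence property} applies directly in both instances.
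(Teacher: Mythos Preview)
Your proof is correct and follows essentially the same route as the paper (which in turn defers details to \cite[Theorem 7.2]{MR4358465}): extract a weak-$*$ limit via Banach--Alaoglu, use the convergence property \eqref{eq:convergence property} to pass to the limit in the discrete states, compare discrete minima against projected competitors to obtain global optimality of the limit and convergence of the optimal values, and finally upgrade weak to strong $L^{2}$-convergence via norm convergence.

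The only visible difference is in the comparison step. The paper fixes a global minimizer $\tilde z$ of the continuous problem as the competitor and, to obtain $\|\tilde z-\tilde z_{\T}\|_{L^{2}(\Omega)}\to 0$ for the piecewise constant projections, invokes the projection formula \eqref{eq:projection_control} together with \cite[Theorem 1]{MR1173747} to secure the regularity $\tilde z\in H^{s}(\Omega)$. You instead compare against the projection of an \emph{arbitrary} $z\in\mathbb{Z}_{ad}$ and obtain $z_{\T}\to z$ in every $L^{r}(\Omega)$, $r<\infty$, directly from density of piecewise constants in $L^{2}(\Omega)$ combined with the uniform $L^{\infty}$ bound. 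Your variant is slightly more elementary in that it bypasses the regularity argument entirely; the paper's choice is not needed for this theorem but foreshadows the role that $H^{s}$-regularity of optimal controls plays in the subsequent error analysis.
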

\begin{proof}
Since $\{ \bar z_{\T} \}$ is uniformly bounded in $L^{\infty}(\Omega)$, we deduce the existence of a nonrelabeled subsequence $\{ \bar z_{\T} \}$ such that $\bar z_{\T} \mathrel{\ensurestackMath{\stackon[1pt]{\rightharpoonup}{\scriptstyle\ast}}} \bar{z}$ in $L^{\infty}(\Omega)$ as $h_{\T} \downarrow 0$. Let $\tilde z \in \mathbb{Z}_{ad}$ be a global solution of \eqref{eq:min_integral}--\eqref{eq:weak_st_eq_integral}. Define $\tilde z_{\T} \in \mathbb{Z}_{ad}(\T)$ by $\tilde z_{\T}|_{T} := \int_{T} \tilde z(x) \mathrm{d}x / |T|$ for $T \in \T$ and define $\tilde p$ as the solution to \eqref{eq:adj_eq_integral} with $u$ replaced by $\tilde u := \mathcal{S} \tilde z$. \EO{Observe that $\tilde p \in \tilde{H}^s(\Omega) \cap L^{\infty}(\Omega)$.}
% In view of \ref{A3} and \eqref{eq:assumptions_on_a_and_L_in_L2}, we have that $\partial L/\partial u(\cdot, \tilde u) - \partial a/\partial u(\cdot,\tilde u) \tilde p \in L^{2}(\Omega)$. Proposition \ref{pro:state_regularity_Lipschitz_new} thus guarantees that $\tilde p \in H^{s+\theta - \epsilon}(\Omega)$. 
Invoke the projection formula \eqref{eq:projection_control} and \cite[Theorem 1]{MR1173747} to obtain $\tilde z \in H^{\EO{s}}(\Omega)$. Consequently, $\| \tilde z - \tilde z_{\T} \|_{L^{2}(\Omega)} \rightarrow 0$ as $h_{\T} \downarrow 0$. The rest of the proof follows the arguments elaborated in the proof of \cite[Theorem 7.2]{MR4358465}.
\end{proof}

We \EO{now present a second convergence result \cite[Theorem 7.3]{MR4358465}.}

\begin{theorem}[convergence]
Let the assumptions of Theorem \ref{thm:convergence} hold.
Let $\bar z$ be a strict local minimum of problem \eqref{eq:min_integral}--\eqref{eq:weak_st_eq_integral}. Then, there exists a sequence $\{ \bar z_{\T} \}$ of local minima of the \EO{fully} discrete optimal control problems \EO{satisfying \eqref{eq:convergence}.}
\label{thm:convergence_local_minima}
\end{theorem}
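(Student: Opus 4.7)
The plan is to follow the classical \emph{auxiliary problem} technique. Since $\bar z$ is a strict local minimum of \eqref{eq:min_integral}--\eqref{eq:weak_st_eq_integral}, there exists $\rho>0$ such that $j(\bar z) < j(z)$ for every $z \in \mathbb{Z}_{ad}$ satisfying $0<\|z-\bar z\|_{L^2(\Omega)}\leq \rho$. For each $\T \in \mathbb{T}$, I would introduce the auxiliary discrete problem of minimizing $j_{\T}(z_{\T})$ over $\mathbb{Z}_{ad}(\T)\cap \bar{B}$, where $\bar B:=\{v\in L^2(\Omega):\|v-\bar z\|_{L^2(\Omega)}\leq \rho\}$. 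This admissible set is nonempty for $h_{\T}$ sufficiently small: the $L^2(\Omega)$-orthogonal projection $P_{\T}\bar z$ of $\bar z$ onto $\mathbb{Z}(\T)$ belongs to $\mathbb{Z}_{ad}(\T)$ and converges to $\bar z$ in $L^2(\Omega)$ because $\bar z \in H^s(\Omega)$ (which follows from the projection formula \eqref{eq:projection_control} combined with \cite[Theorem 1]{MR1173747}). Since the admissible set is closed, convex, bounded, and finite-dimensional, and $j_{\T}$ is continuous, the auxiliary problem admits a global minimum $\bar z_{\T}$.

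Next, I would replay the arguments in the proof of Theorem \ref{thm:convergence} for this auxiliary family. Since $\{\bar z_{\T}\} \subset \mathbb{Z}_{ad}$ is uniformly bounded in $L^{\infty}(\Omega)$, one extracts a nonrelabeled subsequence with $\bar z_{\T}\mathrel{\ensurestackMath{\stackon[1pt]{\rightharpoonup}{\scriptstyle\ast}}}\tilde z$ in $L^{\infty}(\Omega)$ and $\tilde z \in \mathbb{Z}_{ad}\cap \bar B$. The convergence property \eqref{eq:convergence property} then yields $\mathcal{S}_{\T}\bar z_{\T}\to \mathcal{S}\tilde z$ in $L^q(\Omega)$ for every $q\leq 2n/(n-2s)$; using assumption \ref{B2}, dominated convergence, and the uniform $L^{\infty}$-bound \eqref{eq:u_h_bounded}, this upgrades to $\liminf_{h_{\T}\downarrow 0} j_{\T}(\bar z_{\T})\geq j(\tilde z)$. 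Comparing this with the upper bound $j_{\T}(\bar z_{\T})\leq j_{\T}(P_{\T}\bar z) \to j(\bar z)$ (which uses the same $L^q$-convergence applied now to $\mathcal{S}_{\T} P_{\T}\bar z$) gives $j(\tilde z)\leq j(\bar z)$. Because $\bar z$ is a \emph{strict} minimum on $\mathbb{Z}_{ad}\cap \bar B$, we conclude $\tilde z = \bar z$, and a standard subsequence argument then promotes this to $\bar z_{\T}\to \bar z$ in $L^2(\Omega)$ for the whole sequence, together with $j_{\T}(\bar z_{\T})\to j(\bar z)$.

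Finally, for $h_{\T}$ sufficiently small one has $\|\bar z_{\T}-\bar z\|_{L^2(\Omega)}<\rho$, so the constraint $\|z_{\T}-\bar z\|_{L^2(\Omega)}\leq \rho$ is inactive at $\bar z_{\T}$; hence $\bar z_{\T}$ is a local minimum of the original fully discrete problem \eqref{eq:min_discrete}--\eqref{eq:weak_st_eq_discrete}. The main obstacle is the construction of the feasible competitor $P_{\T}\bar z \in \mathbb{Z}_{ad}(\T)\cap \bar B$ with $j_{\T}(P_{\T}\bar z)\to j(\bar z)$, since this is what activates the strict local minimum property; this step relies on the Sobolev regularity of $\bar z$ furnished by \eqref{eq:projection_control}, and, through $\mathcal{S}_{\T}P_{\T}\bar z$, on the convergence property \eqref{eq:convergence property}. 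Every remaining step is a direct adaptation of the arguments used in the proof of Theorem \ref{thm:convergence}.
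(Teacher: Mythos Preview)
Your proposal is correct and is precisely the classical auxiliary-problem argument of Casas--Mateos type that the paper invokes: the paper does not supply its own proof here but simply refers to \cite[Theorem~7.3]{MR4358465}, whose proof proceeds exactly as you outline (localize to $\mathbb{Z}_{ad}(\T)\cap\bar B$, extract a weak-$*$ limit, use strict local minimality to identify it with $\bar z$, recover strong $L^2$-convergence from convergence of norms, and finally observe that the ball constraint becomes inactive). The only point you leave implicit is the passage from weak to strong $L^2$-convergence of $\{\bar z_{\T}\}$, which follows from $j_{\T}(\bar z_{\T})\to j(\bar z)$ together with $\int_\Omega L(x,\mathcal{S}_{\T}\bar z_{\T})\,\mathrm{d}x\to\int_\Omega L(x,\mathcal{S}\bar z)\,\mathrm{d}x$, yielding $\|\bar z_{\T}\|_{L^2(\Omega)}\to\|\bar z\|_{L^2(\Omega)}$; this is part of the ``arguments in the proof of Theorem~\ref{thm:convergence}'' you already cite.
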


%%%%%%%%%%%%%%%%%%%%%%%%%%%%%%%%%%%%%%%%%%%%%%%%%%%%%%%%%%%%%%%%%%%%%%%%%%%%%
%%%%%%%%%%%%%%%%%%%%%%%%%%%%%%%%%%%%%%%%%%%%%%%%%%%%%%%%%%%%%%%%%%%%%%%%%%%%%
\subsubsection{Error estimates}
\label{sec:error_estimates}
%%%%%%%%%%%%%%%%%%%%%%%%%%%%%%%%%%%%%%%%%%%%%%%%%%%%%%%%%%%%%%%%%%%%%%%%%%%%%
%%%%%%%%%%%%%%%%%%%%%%%%%%%%%%%%%%%%%%%%%%%%%%%%%%%%%%%%%%%%%%%%%%%%%%%%%%%%%
Let $\{ \bar z_{\T} \} \EO{\subset} \mathbb{Z}_{ad}(\T)$ be a sequence of local minima of \EO{\eqref{eq:min_discrete}--\eqref{eq:weak_st_eq_discrete}}
such that $\| \bar z - \bar z_{\T} \|_{L^{2}(\Omega)} \rightarrow 0$ as $h_{\T} \downarrow 0$; $\bar z$ being a local solution of the continuous problem \eqref{eq:min_integral}--\eqref{eq:weak_st_eq_integral}; see Theorems \ref{thm:convergence} and \ref{thm:convergence_local_minima}. \EO{In this section, we derive the bound \eqref{eq:error_estimate_in L2} for the error
$
\| \bar z - \bar z_{\T} \|_{L^2(\Omega)}.
$
The following result is instrumental.} 
% accomplish this task, we present the following instrumental result.
%
\begin{theorem}[instrumental error bound]
\EO{Let the assumptions of Theorem \ref{thm:convergence} hold. Assume that, for every $\mathfrak{m}>0$ and $u \in [-\mathfrak{m},\mathfrak{m}]$, \eqref{eq:assumptions_on_a_and_L_in_L2} holds.
Let $\bar z \in \mathbb{Z}_{ad}$ \EO{satisfy} the conditions \eqref{eq:second_order_equivalent}. If \eqref{eq:error_estimate_in L2} is false, then there exists $h_{\star}>0$ such that}
\begin{equation}
\label{eq:basic_estimate}
\mathfrak{C}\| \bar z - \bar z_{\T} \|^2_{L^2(\Omega)} \leq \left[ j'(\bar z_{\T}) - j'(\bar z) \right](\bar z_{\T} - \bar z)
\quad
\forall
h_{\T} \leq h_{\star},
\quad
\mathfrak{C} = 2^{-1}\min \{ \EO{\nu}, \alpha \},
\end{equation}
where $\EO{\nu}$ is the constant appearing in \eqref{eq:second_order_equivalent} and $\alpha$ is the regularization parameter.
\label{thm:instrumental_error_estiamate}
\end{theorem}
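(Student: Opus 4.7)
The plan is to combine a second-order Taylor expansion of $j'$ around $\bar z$ with the second-order sufficient condition \eqref{eq:second_order_equivalent} on the extended critical cone $C_{\bar z}^{\tau}$. First, by the mean value theorem, I would write
\begin{equation*}
[j'(\bar z_{\T}) - j'(\bar z)](\bar z_{\T} - \bar z) = j''(\hat z_{\T})(\bar z_{\T} - \bar z)^2,
\end{equation*}
with $\hat z_{\T} := \bar z + \theta_{\T}(\bar z_{\T} - \bar z)$ for some $\theta_{\T} \in (0,1)$. The task then reduces to proving $j''(\hat z_{\T})(\bar z_{\T} - \bar z)^2 \geq \mathfrak{C}\| \bar z - \bar z_{\T}\|^2_{L^2(\Omega)}$ for all sufficiently small $h_{\T}$.

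I would argue this reduced inequality by contradiction. Assume that along some subsequence $h_{\T_k} \downarrow 0$ with $\bar z_{\T_k} \neq \bar z$ it fails. Set $v_k := (\bar z_{\T_k} - \bar z)/\|\bar z_{\T_k} - \bar z\|_{L^2(\Omega)}$, so that $\|v_k\|_{L^2(\Omega)} = 1$ and $j''(\hat z_{\T_k}) v_k^2 < \mathfrak{C}$. Up to a further subsequence, $v_k \rightharpoonup v$ weakly in $L^2(\Omega)$. Theorem \ref{thm:convergence} yields $\bar z_{\T_k} \to \bar z$ in $L^2(\Omega)$, whence $\hat z_{\T_k} \to \bar z$ in $L^2(\Omega)$, and the stability bound \eqref{eq:stability_integral} combined with \ref{A3} and \ref{B2} delivers strong convergence of the associated states and adjoint states in $\tilde H^s(\Omega) \cap L^{\infty}(\Omega)$.

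The central step is to show $v \in C_{\bar z}^{\tau}$. The sign conditions \eqref{eq:sign_cond} transfer from $v_k$ to $v$ by weak closedness of the defining convex cones: for instance, on $\{\bar z = \mathfrak{a}\}$ we have $\bar z_{\T_k} \geq \mathfrak{a}$, so $v_k \geq 0$, and the property passes to the weak limit. The condition $v = 0$ on $\{|\bar{\mathfrak{p}}| > \tau\}$ is the hard part. I would test \eqref{eq:var_ineq_integral} with $z = \bar z_{\T_k}$ and \eqref{eq:variational_inequality_discrete} with the $L^2(\Omega)$-projection of $\bar z$ onto $\mathbb{Z}(\T_k)$, add the resulting inequalities, divide by $\| \bar z - \bar z_{\T_k}\|_{L^2(\Omega)}$, and then use Theorem \ref{thm:error_estimates_adjoint_equation_2} to control the discrete-adjoint residual together with the negation of \eqref{eq:error_estimate_in L2} to drive the projection-error contribution to zero (since these terms carry powers of $h_{\T_k}$ while the normalising factor decays more slowly by hypothesis). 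The resulting passage-to-the-limit relation, combined with the sign conditions, forces $v$ to vanish wherever $|\bar{\mathfrak{p}}| > \tau$.

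Finally, decomposing $j''(z)w^2$ into a Nemitskii contribution in $(\partial^2_u L, \partial^2_u a)$ evaluated at the state/adjoint associated with $z$ plus the Tikhonov term $\alpha \|w\|^2_{L^2(\Omega)}$, the strong convergences above together with the compact embedding $H^s(\Omega) \hookrightarrow L^{\mathfrak{q}}(\Omega)$ of Proposition \ref{pro:Sobolev_embedding} yield
\begin{equation*}
\liminf_{k \to \infty} j''(\hat z_{\T_k}) v_k^2 \geq j''(\bar z) v^2 + \alpha\bigl(1 - \|v\|^2_{L^2(\Omega)}\bigr).
\end{equation*}
Since $v \in C_{\bar z}^{\tau}$, \eqref{eq:second_order_equivalent} gives $j''(\bar z) v^2 \geq \nu \|v\|^2_{L^2(\Omega)}$, so the right-hand side is bounded below by $\min\{\nu, \alpha\} = 2\mathfrak{C}$, contradicting $j''(\hat z_{\T_k})v_k^2 < \mathfrak{C}$ for $k$ large. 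The principal obstacle is the third step, where the interplay between the two variational inequalities, the piecewise-constant projection, and the controlled decay rate enforced by the negation of \eqref{eq:error_estimate_in L2} must be carefully tracked to localise the weak limit $v$ into $C_{\bar z}^{\tau}$.
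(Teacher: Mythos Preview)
Your proposal is correct and follows essentially the same route as the paper, which refers to \cite[Theorem~7.4]{MR4358465} and spells out the analogous argument in the proof of Theorem~\ref{thm:instrumental_error_estiamate_va}. The only cosmetic difference is that the paper localises the weak limit in the smaller cone $C_{\bar z}$ (via the pointwise relation $\bar{\mathfrak{p}}\,v \geq 0$ combined with $\int_{\Omega}\bar{\mathfrak{p}}\,v\,\mathrm{d}x \leq 0$, the latter coming from the discrete variational inequality and the negation of \eqref{eq:error_estimate_in L2}), which already implies $v \in C_{\bar z}^{\tau}$.
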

\begin{proof}
\EO{With the equivalence \eqref{eq:second_order_equivalent} of Theorem \ref{thm:equivalent_opt_cond} at hand, the proof follows the arguments in \cite[Theorem 7.4]{MR4358465}.}
\end{proof}

% With \eqref{eq:basic_estimate} at hand, 
We \EO{are now ready to derive a bound for the error $\bar z - \bar z_{\T}$ in $L^2(\Omega)$.}

\begin{theorem}[error estimate]
\EO{Let the assumptions of Theorem \ref{thm:instrumental_error_estiamate} hold. Assume, in addition, that $\partial L/ \partial u = \partial L/ \partial u(x,u)$ is locally Lipschitz in $u$. Let $\bar z \in \mathbb{Z}_{ad}$ satisfy the second order conditions \eqref{eq:second_order_equivalent}. Then, there exists $h_{\star} >0$ such that}
\begin{equation}
\label{eq:error_estimate_in L2}
\| \bar z - \bar z_{\T} \|_{L^2(\Omega)}\lesssim h_{\T}^{2\vartheta} |\log h_{\T}|^{2\upsilon}
\qquad
\forall h_{\T} \leq h_{\star},
\qquad
\vartheta = \min \{ s, \tfrac{1}{2} \}.
\end{equation} 
\EO{Here, $\upsilon = \tfrac{1}{2}$ if $s > \tfrac{1}{2}$, $\upsilon = \tfrac{3}{2} + \zeta$ if $s = \tfrac{1}{2}$, and $\upsilon = \tfrac{1}{2} + \zeta$ if $s < \tfrac{1}{2}$; $\zeta$ is as in the statement of Proposition \ref{pro:state_regularity_Lipschitz_new}. The hidden constant is independent of $h_{\T}$}
\label{thm:error_estimate_control}
\end{theorem}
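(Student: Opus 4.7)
The plan is to work from the basic estimate $\mathfrak{C}\|\bar z - \bar z_{\T}\|^2_{L^2(\Omega)} \leq [j'(\bar z_{\T}) - j'(\bar z)](\bar z_{\T} - \bar z)$ of Theorem \ref{thm:instrumental_error_estiamate}, which is available under the proof-by-contradiction hypothesis that \eqref{eq:error_estimate_in L2} fails, and to show that its right-hand side is dominated by $h_{\T}^{4\vartheta}|\log h_{\T}|^{4\upsilon}$ plus a fraction of $\|\bar z - \bar z_{\T}\|^2_{L^2(\Omega)}$, yielding the sought contradiction. Identifying $j'(z)v = (p(z) + \alpha z, v)_{L^2(\Omega)}$, where $p(z)$ denotes the continuous adjoint at $u(z) = \mathcal{S}z$, and setting $\hat p := p(\bar z_{\T})$, the continuous variational inequality $j'(\bar z)(\bar z_{\T} - \bar z) \geq 0$ reduces the right-hand side to $(\hat p + \alpha \bar z_{\T}, \bar z_{\T} - \bar z)_{L^2(\Omega)}$. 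I next test the discrete variational inequality \eqref{eq:variational_inequality_discrete} with $z_{\T} = \Pi^{0}_{\T}\bar z \in \mathbb{Z}_{ad}(\T)$, where $\Pi^{0}_{\T}$ denotes the $L^2(\Omega)$-projection onto $\mathbb{Z}(\T)$ (which maps $\mathbb{Z}_{ad}$ into $\mathbb{Z}_{ad}(\T)$ because local averages of $[\mathfrak{a},\mathfrak{b}]$-valued functions still lie in $[\mathfrak{a},\mathfrak{b}]$). Adding the resulting inequality and exploiting the orthogonality $(w_{\T}, \Pi^{0}_{\T}\bar z - \bar z)_{L^2(\Omega)} = 0$ for every $w_{\T} \in \mathbb{Z}(\T)$, applied to both $w_{\T} = \bar z_{\T}$ and $w_{\T} = \Pi^{0}_{\T}\bar p_{\T}$, produces the decomposition
\[
\mathfrak{C}\|\bar z - \bar z_{\T}\|^2_{L^2(\Omega)} \leq (\hat p - \bar p_{\T}, \bar z_{\T} - \bar z)_{L^2(\Omega)} + (\bar p_{\T} - \Pi^{0}_{\T}\bar p_{\T}, \Pi^{0}_{\T}\bar z - \bar z)_{L^2(\Omega)}.
\]

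For the first term I invoke Theorem \ref{thm:error_estimates_adjoint_equation_2} with the choice $z = z_{\T} = \bar z_{\T}$, so that the $\|z - z_{\T}\|_{L^2(\Omega)}$ contribution vanishes and $\|\hat p - \bar p_{\T}\|_{L^2(\Omega)} \lesssim h_{\T}^{2\vartheta}|\log h_{\T}|^{2\upsilon}$; Cauchy--Schwarz and Young's inequality then bound it by $C h_{\T}^{4\vartheta}|\log h_{\T}|^{4\upsilon} + \tfrac{\mathfrak{C}}{4}\|\bar z - \bar z_{\T}\|^2_{L^2(\Omega)}$. For the second term, the triangle inequality yields $\|\bar p_{\T} - \Pi^{0}_{\T}\bar p_{\T}\|_{L^2(\Omega)} \leq 2\|\bar p - \bar p_{\T}\|_{L^2(\Omega)} + \|\bar p - \Pi^{0}_{\T}\bar p\|_{L^2(\Omega)}$, and a second application of Theorem \ref{thm:error_estimates_adjoint_equation_2} (now with $z = \bar z$, $z_{\T} = \bar z_{\T}$) controls $\|\bar p - \bar p_{\T}\|_{L^2(\Omega)}$ by $h_{\T}^{2\vartheta}|\log h_{\T}|^{2\upsilon} + \|\bar z - \bar z_{\T}\|_{L^2(\Omega)}$. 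The remaining projection errors $\|\bar p - \Pi^{0}_{\T}\bar p\|_{L^2(\Omega)}$ and $\|\bar z - \Pi^{0}_{\T}\bar z\|_{L^2(\Omega)}$ are each of order $h_{\T}^{2\vartheta}$ up to logarithms, which follows from the Sobolev regularity $\bar p, \bar z \in H^{s+\frac{1}{2}-\epsilon}(\Omega)$ for $s \geq \tfrac{1}{2}$ (Theorem \ref{thm:regularity_space_Sobolev}, whose hypothesis \ref{D1} is precisely \eqref{eq:assumptions_on_a_and_L_in_L2}) and $\bar p, \bar z \in H^{2s-2\epsilon}(\Omega)$ for $s < \tfrac{1}{2}$ (Theorem \ref{thm:regularity_space_Sobolev_2}), combined with the standard optimization $\epsilon = |\log h_{\T}|^{-1}$. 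A further Young inequality absorbs the residual $\|\bar z - \bar z_{\T}\|_{L^2(\Omega)}$ into $\tfrac{\mathfrak{C}}{4}\|\bar z - \bar z_{\T}\|^2_{L^2(\Omega)}$ at the cost of an additional $h_{\T}^{4\vartheta}|\log h_{\T}|^{4\upsilon}$ contribution.

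Collecting the estimates, absorbing $\tfrac{\mathfrak{C}}{2}\|\bar z - \bar z_{\T}\|^2_{L^2(\Omega)}$ into the left-hand side and extracting square roots yields $\|\bar z - \bar z_{\T}\|_{L^2(\Omega)} \lesssim h_{\T}^{2\vartheta}|\log h_{\T}|^{2\upsilon}$, which contradicts the assumed failure of \eqref{eq:error_estimate_in L2}. The main obstacle is ensuring that $\|\bar z - \Pi^{0}_{\T}\bar z\|_{L^2(\Omega)}$ truly decays like $h_{\T}^{2\vartheta}$ under the comparatively weak hypotheses of Theorem \ref{thm:error_estimate_control}: one cannot invoke the sharper H\"older regularity of Theorems \ref{pro:state_regularity_Lipschitz_old} and \ref{thm:regularity_space} since no exterior ball condition nor assumptions \ref{C1} or \ref{D2} are imposed, so one must split on $s \geq \tfrac{1}{2}$ versus $s < \tfrac{1}{2}$, invoke Theorem \ref{thm:regularity_space_Sobolev} or Theorem \ref{thm:regularity_space_Sobolev_2} accordingly, and track the $\epsilon^{-1/2-\zeta}$ dependence of the regularity constants through $\epsilon = |\log h_{\T}|^{-1}$ to recover the precise log exponents encoded by $2\upsilon$.
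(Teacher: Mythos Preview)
Your proof is correct and follows essentially the same route as the paper: contradiction via Theorem~\ref{thm:instrumental_error_estiamate}, testing the discrete variational inequality with $\Pi_{\T}\bar z$, splitting the right-hand side into a discretization term for the adjoint and a projection-error product, and then absorbing via Young. The only difference is cosmetic: the paper writes the second contribution as $(p(\bar z_{\T}) - \Pi_{\T}p(\bar z_{\T}), \Pi_{\T}\bar z - \bar z)_{L^2(\Omega)}$ and appeals directly to the Sobolev regularity of the \emph{continuous} adjoint $p(\bar z_{\T})$ via Proposition~\ref{pro:state_regularity_Lipschitz_new}, whereas you write it as $(\bar p_{\T} - \Pi^{0}_{\T}\bar p_{\T}, \Pi^{0}_{\T}\bar z - \bar z)_{L^2(\Omega)}$ and detour through $\bar p$ using Theorem~\ref{thm:error_estimates_adjoint_equation_2}, which introduces an extra $\|\bar z - \bar z_{\T}\|_{L^2(\Omega)}$ factor that you then absorb with a second Young inequality. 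Both yield the same log exponents since the $\mathrm{I}_{\T}$-type term dominates.
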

\begin{proof}
We proceed by contradiction and assume that the desired error estimate \eqref{eq:error_estimate_in L2} does not hold so that we have at hand the instrumental one of Theorem \ref{thm:instrumental_error_estiamate}.

Set \EO{$z_{\T} = \Pi_{\T} \bar z$ in} \eqref{eq:variational_inequality_discrete} to deduce that $j_{\T}'(\bar z_{\T})(\Pi_{\T} \bar z - \bar z_{\T}) \geq 0$. Here, $\Pi_{\T}: L^2(\Omega) \rightarrow \mathbb{Z}(\T)$ denotes the orthogonal projection operator onto piecewise constant functions over $\T$. We now invoke the continuous variational inequality \eqref{eq:var_ineq_integral} to arrive at $j'(\bar z)(\bar z_{\T} - \bar z) \geq 0$. With these two inequalities at hand, we utilize \eqref{eq:basic_estimate} to obtain
\[
\mathfrak{C}
\| \bar z - \bar z_{\T} \|_{L^2(\Omega)}^2 \leq [j_{\T}'(\bar z_{\T}) -j'(\bar z_{\T})]( \Pi_{\T} \bar z - \bar z_{\T})  + j'(\bar z_{\T})( \Pi_{\T} \bar z - \bar z)  =: \mathrm{I}_{\T} + \mathrm{J}_{\T}.
\]

To \EO{bound the term $\mathrm{I}_{\T}$, we use the definition of $\Pi_{\T}$ and proceed as follows:}
\begin{equation*}
\begin{aligned}
 \mathrm{I}_{\T} & = (\bar p_{\T}  - p(\bar z_{\T}), \Pi_{\T} \bar z - \bar z_{\T})_{L^2(\Omega)} 
= 
(\bar p_{\T}  - p(\bar z_{\T}), \Pi_{\T} (\bar z - \bar z_{\T}) )_{L^2(\Omega)} 
\\
& \lesssim 
\| \bar p_{\T}  - p(\bar z_{\T}) \|_{L^2(\Omega)} \| \bar z - \bar z_{\T}\|_{L^2(\Omega)}
\leq 
C h_{\T}^{\EO{4\vartheta}}|\log h_{\T}|^{\EO{4\upsilon}}
+
\frac{\mathfrak{C}}{4} \| \bar z - \EO{\bar z_{\T}} \|^2_{L^2(\Omega)}.
\end{aligned}
\label{eq:IT}
\end{equation*}
Here, $p(\bar z_{\T})$ denotes the solution to \eqref{eq:adj_eq_integral} with $u$ replaced by $\mathcal{S} \bar z_{\T}$. \EO{The error bound $\| \bar p_{\T}  - p(\bar z_{\T}) \|_{L^2(\Omega)} \lesssim h_{\T}^{2\vartheta}|\log h_{\T}|^{2\upsilon}$ follows from the arguments elaborated within the proofs of Theorems \ref{thm:error_estimates_adjoint_equation} and \ref{thm:error_estimates_adjoint_equation_2}. For brevity, we skip the details.}

We control the term $\mathrm{J}_{\T}$ \EO{on the basis of similar} arguments. In fact, we have
\begin{equation*}
\begin{aligned}
\mathrm{J}_{\T} & = ( p(\bar z_{\T}) + \alpha \bar z_{\T}, \Pi_{\T} \bar z - \bar z)_{L^2(\Omega)} = ( p(\bar z_{\T}) , \Pi_{\T} \bar z - \bar z)_{L^2(\Omega)}
\\
& = (p(\bar z_{\T}) - \Pi_{\T} p(\bar z_{\T}),\Pi_{\T} \bar z - \bar z)_{L^2(\Omega)} \lesssim \EO{h_{\T}^{4\vartheta}| \log h_{\T} |^{2\eta}},
 \quad
 \vartheta = \min \{ s, \tfrac{1}{2} \},
\label{eq:IIT}
\end{aligned}
\end{equation*}
where \EO{$\eta = 0$ if $s>\tfrac{1}{2}$ and $\eta = \tfrac{1}{2}+\zeta$ if $s\leq\tfrac{1}{2}$. To simplify the presentation, we define $\mathfrak{e}_p := p(\bar z_{\T}) - \Pi_{\T} p(\bar z_{\T})$. To obtain the bound for $\textrm{J}_{\T}$, we have used the estimates
\begin{equation*}
 \begin{aligned}
& \| \mathfrak{e}_p \|_{L^2(\Omega)} \lesssim h_{\T},
\qquad
\| \Pi_{\T} \bar z - \bar z \|_{L^2(\Omega)} \lesssim h_{\T},
\quad
\textrm{ for } s > \tfrac{1}{2},
\\
&\| \mathfrak{e}_p \|_{L^2(\Omega)} \lesssim h^{2s}_{\T} |\log h_{\T}|^{\frac{1}{2}+\zeta},
\qquad
\| \Pi_{\T} \bar z - \bar z \|_{L^2(\Omega)} \lesssim h^{2s}_{\T} |\log h_{\T}|^{\frac{1}{2}+\zeta},
\quad
  \textrm{ for } s \leq \tfrac{1}{2}.
\end{aligned}
\end{equation*}
These bounds follow from standard error estimates for the orthogonal projection $\Pi_{\T}$ combined with the regularity results of Proposition \ref{pro:state_regularity_Lipschitz_new}, which guarantee that
\[
 p(\bar{z}_{\T}) \in H^1(\Omega) \textrm{ for } s>\tfrac{1}{2},
 \qquad
 p(\bar{z}_{\T}) \in H^{2s-2\epsilon}(\Omega) \textrm{ for } s\leq\tfrac{1}{2},
\]
together with the bound 
$
 \| p(\bar{z}_{\T}) \|_{H^{2s-2\epsilon}(\Omega)} \lesssim \epsilon^{-\frac{1}{2}-\zeta}.
$
Here, $\epsilon \in (0,s)$ and $\zeta$ is as in the statement of Proposition \ref{pro:state_regularity_Lipschitz_new}. Observe that $\partial L/ \partial u (\cdot, \mathcal{S} \bar z_{\T}) - \partial a/ \partial u (\cdot, \mathcal{S} \bar z_{\T})p(\bar z_{\T}) \in L^2(\Omega)$ uniformly with respect to discretization. We notice that the same regularity properties can be obtained for $\bar{z}$.}
% In view of the projection formula \eqref{eq:projection_control} and \cite[Theorem A.1]{MR1786735}, 

Finally, \EO{we collect the bounds obtained for $\mathrm{I}_{\T}$ and $\mathrm{J}_{\T}$ to obtain $\| \bar z - \bar z_{\T} \|_{L^2(\Omega)} \lesssim h_{\T}^{2\vartheta}|\log h_{\T}|^{2\upsilon}$}. This is a contradiction and concludes the proof.
\end{proof}

\begin{remark}[improvements on the theory]
\rm
\EO{The error bound \eqref{eq:error_estimate_in L2} improves the one in  \cite[Theorem 7.5]{MR4358465} in several directions. First, \eqref{eq:error_estimate_in L2} holds for $n \geq 2$ and $s \in (0,1)$. This is contrast to \cite[estimate (7.15)]{MR4358465}, which holds for $n \in \{2,3\}$ and $s>n/4$. Second, is contrast to \cite[Theorem 7.5]{MR4358465}, where $\partial \Omega \in C^{\infty}$, in Theorem \ref{thm:error_estimate_control} we assume that $\Omega$ is merely Lipschitz.}
\end{remark}

\begin{remark}[error bound \eqref{eq:error_estimate_in L2}]
\rm
\EO{If $s \geq \frac{1}{2}$, the bound \eqref{eq:error_estimate_in L2} reads $\| \bar z - \bar z_{\T} \|_{L^2(\Omega)} \lesssim h_{\T}|\log h_{\T}|^{2\upsilon}$, which is \emph{nearly--optimal} in terms of approximation. If $s < \frac{1}{2}$, \eqref{eq:error_estimate_in L2} reads $\| \bar z - \bar z_{\T} \|_{L^2(\Omega)} \lesssim h^{2s}|\log h_{\T}|^{2\upsilon}$, which is \emph{suboptimal} in terms of approximation.}
\end{remark}

We \EO{now present error bounds for $\bar u - \bar u_{\T}$ and $\bar p - \bar p_{\T}$.}

\begin{theorem}[error estimates]
Let the assumptions of Theorem \ref{thm:error_estimate_control} hold. Then, there exist $h_{\star} >0$ \EO{such that
\begin{equation}
\begin{aligned}
\| \bar u - \bar u_{\T} \|_{s} \lesssim h_{\T}^{\vartheta} |\log h_{\T}|^{\upsilon},
\qquad
& \| \bar u - \bar u_{\T}\|_{L^2(\Omega)} \lesssim h_{\T}^{2\vartheta} |\log h_{\T}|^{2\upsilon},
\\
 \| \bar p - \bar p_{\T} \|_{s} \lesssim h_{\T}^{\vartheta} |\log h_{\T}|^{\upsilon},
\qquad
& \| \bar p - \bar p_{\T} \|_{L^2(\Omega)} \lesssim h_{\T}^{2\vartheta} |\log h_{\T}|^{2\upsilon},
\end{aligned}
\end{equation}
for every $h_{\T} \leq h_{\star}$.}
\label{thm:error_estimate_state_adjoint}
\end{theorem}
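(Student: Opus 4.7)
The plan is to reduce both pairs of bounds to the control error estimate of Theorem \ref{thm:error_estimate_control} combined with the already established finite element estimates for the state and adjoint equations. Throughout, I denote by $u(\bar z_{\T}) := \mathcal{S}\bar z_{\T}$ the continuous state associated with the discrete optimal control and by $p(\bar z_{\T})$ the corresponding continuous adjoint.

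For the state errors, I would split $\bar u - \bar u_{\T} = \bigl(\bar u - u(\bar z_{\T})\bigr) + \bigl(u(\bar z_{\T}) - \bar u_{\T}\bigr)$. The first term satisfies a linearized fractional equation with right-hand side $\bar z - \bar z_{\T}$ and a nonnegative zero-order coefficient obtained from the mean-value form of $a(\cdot,\bar u) - a(\cdot,u(\bar z_{\T}))$; testing with the difference itself, using assumption \ref{A2} and the continuous embedding $L^2(\Omega) \hookrightarrow H^{-s}(\Omega)$, yields the Lipschitz stability $\|\bar u - u(\bar z_{\T})\|_s + \|\bar u - u(\bar z_{\T})\|_{L^2(\Omega)} \lesssim \|\bar z - \bar z_{\T}\|_{L^2(\Omega)}$. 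The second term is a pure Galerkin error for the semilinear state equation driven by $\bar z_{\T} \in \mathbb{Z}_{ad}$, which is uniformly bounded in $L^{\infty}(\Omega)$; I would therefore apply Theorem \ref{thm:error_estimates_state_equation_2} for $s \in (0,\tfrac12)$ and Theorem \ref{thm:error_estimates_state_equation} for $s \in [\tfrac12,1)$ to obtain $\|u(\bar z_{\T}) - \bar u_{\T}\|_s \lesssim h_{\T}^{\vartheta}|\log h_{\T}|^{\upsilon}$ and $\|u(\bar z_{\T}) - \bar u_{\T}\|_{L^2(\Omega)} \lesssim h_{\T}^{2\vartheta}|\log h_{\T}|^{2\upsilon}$.

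For the adjoint errors, I would exploit the fact that $\bar p_{\T}$ solves \eqref{eq:discrete_adjoint_2} with $u_{\T} = \bar u_{\T}$, which is precisely the discrete state produced by \eqref{eq:discrete_semilinear_pde} with forcing $\bar z_{\T}$. This is exactly the setting of Theorem \ref{thm:error_estimates_adjoint_equation_2} with $z = \bar z$ and $z_{\T} = \bar z_{\T}$, and a direct application delivers
\begin{equation*}
\|\bar p - \bar p_{\T}\|_s \lesssim h_{\T}^{\vartheta}|\log h_{\T}|^{\upsilon} + \|\bar z - \bar z_{\T}\|_{L^2(\Omega)},
\qquad
\|\bar p - \bar p_{\T}\|_{L^2(\Omega)} \lesssim h_{\T}^{2\vartheta}|\log h_{\T}|^{2\upsilon} + \|\bar z - \bar z_{\T}\|_{L^2(\Omega)}.
\end{equation*}
Combining these inequalities with the control estimate $\|\bar z - \bar z_{\T}\|_{L^2(\Omega)} \lesssim h_{\T}^{2\vartheta}|\log h_{\T}|^{2\upsilon}$ from Theorem \ref{thm:error_estimate_control}, I would observe that the control-driven contribution is of the same order as, or higher order than, the pure finite element contribution in each case, which gives the four claimed bounds.

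The argument is, in my view, more a matter of careful bookkeeping than of any deep new idea: all the heavy lifting has already been done in Theorems \ref{thm:error_estimates_state_equation}--\ref{thm:error_estimates_state_equation_2}, Theorem \ref{thm:error_estimates_adjoint_equation_2}, and Theorem \ref{thm:error_estimate_control}. The only point requiring mild care is the threshold case $s = \tfrac12$, where the logarithmic exponent $\upsilon = \tfrac{3}{2}+\zeta$ must be tracked consistently through both the stability bound for $u(\bar z_{\T}) - \bar u_{\T}$ and the adjoint estimate; otherwise the computation is routine, and no new regularity assumptions beyond those already listed in Theorem \ref{thm:error_estimate_control} are needed.
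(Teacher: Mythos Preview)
Your proposal is correct and follows essentially the same route as the paper: the splitting $\bar u - \bar u_{\T} = (\bar u - \mathcal{S}\bar z_{\T}) + (\mathcal{S}\bar z_{\T} - \bar u_{\T})$ is exactly the paper's decomposition via the auxiliary variable $\bar y$ from \eqref{eq:mathsf_u}, and the adjoint bounds are likewise obtained by direct appeal to Theorem \ref{thm:error_estimates_adjoint_equation_2} combined with Theorem \ref{thm:error_estimate_control}. The only cosmetic difference is that the paper cites the proof of Theorem \ref{thm:error_estimates_adjoint_equation_2} for the state Galerkin piece, whereas you invoke Theorems \ref{thm:error_estimates_state_equation} and \ref{thm:error_estimates_state_equation_2} directly; the underlying estimates are the same.
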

\begin{proof}
\EO{The bound for $\| \bar u - \bar u_{\T} \|_{s}$ is contained in the proof of Theorem \ref{thm:error_estimates_adjoint_equation_2}:  
\[
\| \bar u - \bar y  \|_{s} \lesssim \| \bar z - \bar z_{\T} \|_{L^2(\Omega)} \lesssim h^{2\vartheta}_{\T} |\log h_{\T}|^{2\upsilon},
\qquad
\| \bar y - \bar{u}_{\T} \|_{s} \lesssim h_{\T}^{\vartheta} |\log h_{\T}|^{\upsilon},
\]
upon utilizing \eqref{eq:error_estimate_in L2}. Thus, $\| \bar u - \bar u_{\T} \|_{s} \lesssim h_{\T}^{\vartheta} |\log h_{\T}|^{\upsilon}$. The bound for $\| \bar u - \bar u_{\T}\|_{L^2(\Omega)}$ follows similar arguments upon utilizing $\| \bar y - \bar{u}_{\T}\|_{L^2(\Omega)} \lesssim h^{2\vartheta}_{\T} |\log h_{\T}|^{2\upsilon}$. The bounds for the error committed in the approximation of $\bar p$ are the content of Theorem \ref{thm:error_estimates_adjoint_equation_2}.}
\end{proof}

\subsection{The variational discretization approach}
\label{sec:variational_discretization_approach}
In this section, we propose a semidiscrete scheme that is based on the so-called \emph{variational discretization approach} \cite{Hinze:05}. The scheme, which involves discretization \emph{only} on the state space ($\mathbb{Z}_{ad}$ is not discretized), reads as follows: Find
$
\min \{ J(u_{\T},\mathsf{z}): (u_{\T},\mathsf{z}) \in \mathbb{V}(\T) \times \mathbb{Z}_{ad} \}
$
subject to
\begin{equation}\label{eq:weak_st_eq_discrete_va}
\mathcal{A}( u_{\T}, v_{\T})+\int_{\Omega} a(x,u_{\T}(x)) v_{\T}(x) \mathrm{d}x = \int_{\Omega} \mathsf{z}(x) v_{\T}(x) \mathrm{d}x \quad \forall v_{\T} \in \mathbb{V}(\T).
\end{equation}

The existence of a solution and first order optimality conditions follow standard arguments. In particular, if $\bar{\mathsf{z}}$ denotes a local minimum, then
\begin{equation}
 j_{\T}'(\bar{\mathsf{z}})(\mathsf{z} - \bar{\mathsf{z}})
= 
 (\bar p_{\T} + \alpha \bar{\mathsf{z}}, \mathsf{z} - \bar{\mathsf{z}})_{L^2(\Omega)}  \geq 0 
 \quad
 \forall \mathsf{z} 
 \in \mathbb{Z}_{ad},
 \label{eq:variational_inequality_discrete_va}
\end{equation}
where $\bar{p}_{\T} \in \mathbb{V}(\T)$ solves \eqref{eq:optimal_adjoint_state_discrete} with $\bar u_{\T} = S_{\T} \bar{\mathsf{z}}$, i.e., $\bar u_{\T}$ solves \eqref{eq:weak_st_eq_discrete_va} with $\mathsf{z}$ replaced by $\bar{\mathsf{z}}$. We notice that, in view of \eqref{eq:variational_inequality_discrete_va}, the following projection formula holds \cite[section 4.6]{MR2583281}:
$
\bar{\mathsf{z}}(x) = \Pi_{[ \mathfrak{a}, \mathfrak{b} ]}(-\alpha^{-1} \bar{p}_{\T}(x)) 
$
for a.e.~$x \in \Omega$. The scheme thus induces a discretization of optimal controls by projecting $\bar{p}_{\T}$ into the admissible control set. Since $\bar{\mathsf{z}}$ implicitly depends on $\T$, in what follows we adopt the notation $\bar{\mathsf{z}}_{\T}$.

\subsubsection{Convergence of discretizations}
\label{sec:convergence_semidiscrete} 

\EO{Within the setting of Theorem \ref{thm:convergence}, we present the following convergence results:
\begin{itemize}
 \item Let $\T \in \mathbb{T}$ and let $\bar{\mathsf{z}}_{\T} \in \mathbb{Z}_{ad}$ be a global solution of the semidiscrete scheme. Then, there exist nonrelabeled subsequences of $\{\bar{\mathsf{z}}_{\T}\}$ such that $\bar{\mathsf{z}}_{\T} \mathrel{\ensurestackMath{\stackon[1pt]{\rightharpoonup}{\scriptstyle\ast}}} \bar{z}$ in $L^{\infty}(\Omega)$ as $h_{\T} \downarrow 0$ and \eqref{eq:convergence} holds; $\bar{z}$ is a global solution to \eqref{eq:min_integral}--\eqref{eq:weak_st_eq_integral}.
 \item Let $\bar{z} \in \mathbb{Z}_{ad}$ be a strict local minimum of \eqref{eq:min_integral}--\eqref{eq:weak_st_eq_integral}. Then, there exists a sequence of local minima $\{\bar{\mathsf{z}}_{\T}\}$ of the semidiscrete scheme satisfying \eqref{eq:convergence}.
\end{itemize}
The proof of these results follow very similar arguments to the ones in the proofs of Theorems \ref{thm:convergence} and \ref{thm:convergence_local_minima}. For brevity, we do not present further details.}

\subsubsection{Error estimates}

Let $\{ \bar{\mathsf{z}}_{\T} \} \subset \mathbb{Z}_{ad}$ be a sequence of local minima such that $\bar{\mathsf{z}}_{\T} \rightarrow \bar z$ in $L^2(\Omega)$ as $h_{\T} \downarrow 0$; $\bar z \in \mathbb{Z}_{ad}$ being a local solution of \eqref{eq:min_integral}--\eqref{eq:weak_st_eq_integral}. In what follows, we derive a bound for $ \bar z - \bar{\mathsf{z}}_{\T}$ in $L^2(\Omega)$. The following result is instrumental.
\begin{theorem}[instrumental error bound]
\EO{Let $n \geq 2$ and $s \in (0,1)$. Let $\Omega$ be a bounded Lipschitz domain. Assume that \ref{A1}--\ref{A3}, \ref{B1}--\ref{B2}, \eqref{eq:assumptions_on_a_and_L_in_L2}, and \eqref{eq:u_h_bounded} hold. Let $\bar z \in \mathbb{Z}_{ad}$ satisfy the conditions \eqref{eq:second_order_equivalent}. Then, there exists $h_{\star}>0$ such that}
\begin{equation}
\label{eq:basic_estimate_va}
\mathfrak{C}\| \bar z - \bar{\mathsf{z}}_{\T} \|^2_{L^2(\Omega)} \leq \left[ j'(\bar{\mathsf{z}}_{\T}) - j'(\bar z) \right](\bar{\mathsf{z}}_{\T} - \bar z)
\quad
\forall 
h_{\T} \leq h_{\star},
\quad
\mathfrak{C} = 2^{-1}\min \{ \EO{\nu}, \alpha \},
\end{equation}
where $\alpha$ is the regularization parameter and $\EO{\nu}$ is the constant appearing in \eqref{eq:second_order_equivalent}.
\label{thm:instrumental_error_estiamate_va}
\end{theorem}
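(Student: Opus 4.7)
The plan is to argue by contradiction, mimicking the strategy underlying Theorem \ref{thm:instrumental_error_estiamate} but exploiting the fact that, in the variational discretization approach, the admissible set is not discretized, so that both $\bar z$ and $\bar{\mathsf{z}}_{\T}$ can be used directly as test functions in the opposite inequality. Concretely, suppose \eqref{eq:basic_estimate_va} fails along a subsequence with $e_{\T} := \bar{\mathsf{z}}_{\T} - \bar z \not\equiv 0$, set $v_{\T} := e_{\T}/\|e_{\T}\|_{L^2(\Omega)}$, extract a weakly convergent subsequence $v_{\T} \rightharpoonup v$ in $L^2(\Omega)$, and apply the mean value theorem to $j'$ to obtain $\hat z_{\T} = \bar z + \theta_{\T} e_{\T}$, with $\theta_{\T} \in (0,1)$, such that
\[
j''(\hat z_{\T}) v_{\T}^2 < \mathfrak{C}.
\]

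The first substantive step is to show $v \in C_{\bar z}$. Taking $\mathsf{z} = \bar z$ in \eqref{eq:variational_inequality_discrete_va} and $z = \bar{\mathsf{z}}_{\T}$ in \eqref{eq:var_ineq_integral} gives $(\bar p_{\T} + \alpha \bar{\mathsf{z}}_{\T}, v_{\T})_{L^2(\Omega)} \leq 0$ and $(\bar p + \alpha \bar z, v_{\T})_{L^2(\Omega)} \geq 0$. The $L^2$-error bound of Theorem \ref{thm:error_estimates_adjoint_equation_2}, together with $\bar{\mathsf{z}}_{\T} \to \bar z$ in $L^2(\Omega)$, yields $\bar p_{\T} + \alpha \bar{\mathsf{z}}_{\T} \to \bar{\mathfrak{p}}$ strongly in $L^2(\Omega)$. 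Passing to the limit in both inequalities gives $(\bar{\mathfrak{p}}, v)_{L^2(\Omega)} = 0$. Weak convergence preserves the sign conditions \eqref{eq:sign_cond}, which, combined with the projection formula \eqref{eq:projection_control}, forces $\bar{\mathfrak{p}}\, v \geq 0$ a.e.; hence $\bar{\mathfrak{p}}\, v = 0$ a.e.~and $v \in C_{\bar z} \subset C_{\bar z}^{\tau}$ for every $\tau > 0$.

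The next step is to take the limit in $j''(\hat z_{\T}) v_{\T}^2$. Writing $j''(z) w^2 = \alpha \|w\|_{L^2(\Omega)}^2 + Q(z,w)$, where $Q$ gathers the terms involving $L_{uu}$, $a_{uu}$, and the linearized state and adjoint, the compact embedding $\tilde H^s(\Omega) \hookrightarrow L^q(\Omega)$ of Proposition \ref{pro:Sobolev_embedding} (for a suitable $q<2n/(n-2s)$) together with $\hat z_{\T} \to \bar z$ in $L^2(\Omega)$ and standard stability estimates for the linearized state/adjoint equations allow us to conclude $Q(\hat z_{\T}, v_{\T}) \to Q(\bar z, v)$. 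Since $\|v_{\T}\|_{L^2(\Omega)} = 1$, this yields
\[
\mathfrak{C} \geq \liminf_{h_{\T}\downarrow 0} j''(\hat z_{\T}) v_{\T}^2 = \alpha + Q(\bar z,v) = j''(\bar z) v^2 + \alpha\bigl(1 - \|v\|_{L^2(\Omega)}^2\bigr).
\]

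Finally, we close the argument by splitting into two cases. If $v = 0$, then $Q(\bar z,0) = 0$ and the above gives $\mathfrak{C} \geq \alpha > \mathfrak{C}$, a contradiction. If $v \neq 0$, the equivalence \eqref{eq:second_order_equivalent} of Theorem \ref{thm:equivalent_opt_cond} together with $v \in C_{\bar z}^{\tau}$ implies $j''(\bar z) v^2 \geq \nu \|v\|_{L^2(\Omega)}^2$, so the preceding display becomes
\[
\mathfrak{C} \geq \alpha\bigl(1 - \|v\|_{L^2(\Omega)}^2\bigr) + \nu \|v\|_{L^2(\Omega)}^2 \geq \min\{\alpha,\nu\} = 2\mathfrak{C},
\]
again a contradiction. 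The main obstacle is verifying the weak continuity of $Q$ at $(\bar z, v)$ along the sequence $(\hat z_{\T}, v_{\T})$, which relies on the strong $L^2$ convergence $\hat z_{\T} \to \bar z$, the compact Sobolev embedding, uniform bounds on the states and adjoints ensured by \eqref{eq:u_h_bounded} and assumptions \ref{A3}, \ref{B2}, and the Lipschitz continuity assumed on $a$ and $\partial L/\partial u$ via \eqref{eq:assumptions_on_a_and_L_in_L2}.
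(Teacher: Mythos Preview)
Your proposal is correct and follows essentially the same route as the paper's proof: normalize $v_{\T}=(\bar{\mathsf{z}}_{\T}-\bar z)/\|\bar{\mathsf{z}}_{\T}-\bar z\|_{L^2(\Omega)}$, extract a weak limit $v$, show $v\in C_{\bar z}$ via the variational inequalities and the strong convergence $\bar p_{\T}+\alpha\bar{\mathsf{z}}_{\T}\to\bar{\mathfrak p}$ in $L^2(\Omega)$, apply the mean value theorem to rewrite the left side as $j''(\hat z_{\T})v_{\T}^2$, and pass to the limit using the compact embedding to conclude $\lim j''(\hat z_{\T})v_{\T}^2 = j''(\bar z)v^2 + \alpha(1-\|v\|_{L^2(\Omega)}^2)\geq \min\{\nu,\alpha\}$. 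The only cosmetic differences are that you frame the argument explicitly by contradiction and split into $v=0$ versus $v\neq 0$ at the end, whereas the paper treats both cases uniformly via $j''(\bar z)v^2\geq\nu\|v\|_{L^2(\Omega)}^2$ and $\|v\|_{L^2(\Omega)}\leq 1$; also, your use of both \eqref{eq:var_ineq_integral} and \eqref{eq:variational_inequality_discrete_va} to obtain $(\bar{\mathfrak p},v)_{L^2(\Omega)}=0$ is slightly redundant, since the discrete inequality alone already gives $(\bar{\mathfrak p},v)_{L^2(\Omega)}\leq 0$, which combined with $\bar{\mathfrak p}\,v\geq 0$ a.e.\ suffices.
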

\begin{proof}
Define $v_{\T} = (\bar{\mathsf{z}}_{\T} - \bar z)/\| \bar{\mathsf{z}}_{\T} - \bar z \|_{L^2(\Omega)}$. We assume that (up to a subsequence if necessary) $v_{\T} \rightharpoonup v$ in $L^2(\Omega)$ as $h_{\T} \downarrow 0$. We prove that $v \in C_{\bar z}$, where $C_{\bar z}$ is defined in \eqref{eq:Cz}. \EO{Observe that $v_{\T}$ satisfies \eqref{eq:sign_cond} because $\bar{\mathsf{z}}_{\T} \in \mathbb{Z}_{ad}$. Since the set of elements satisfying \eqref{eq:sign_cond} is  weakly closed in $L^2(\Omega)$, we conclude that $v$ satisfies \eqref{eq:sign_cond} as well. We now prove that $\bar{\mathfrak{p}}(x) \neq 0 \implies v(x) = 0$ for a.e.~$x \in \Omega$; recall that} $\bar{\mathfrak{p}}= \bar p + \alpha \bar{z}$. Define $\bar{\mathfrak{p}}_{\T}(x):= \bar p_{\T}(x) + \alpha \bar{\mathsf{z}}_{\T}(x)$. \EO{The convergence property \eqref{eq:convergence property} and Theorem \ref{thm:error_estimates_adjoint_equation_2} allow us to conclude that $\bar{\mathsf{z}}_{\T} \rightarrow \bar z$ in $L^2(\Omega)$ guarantee that $\bar{\mathfrak{p}}_{\T} \rightarrow \bar{\mathfrak{p}}$ in $L^2(\Omega)$ as $h_{\T} \downarrow 0$. Invoke \eqref{eq:variational_inequality_discrete_va} with $\mathsf{z} = \bar{z}$ to thus conclude that}
\[
\int_{\Omega} \bar{\mathfrak{p}}(x)v(x) \mathrm{d}x = \lim_{h_{\T} \downarrow 0} \frac{1}{\| \bar{\mathsf{z}}_{\T} - \bar z \|_{L^2(\Omega)}} \left[ \int_{\Omega} [\bar p_{\T}(x) + \alpha \bar{\mathsf{z}}_{\T}(x)] [\bar{\mathsf{z}}_{\T}(x) - \bar z(x)]\mathrm{d}x  \right] \leq 0.
\]
On the other hand, since $v$ satisfies \eqref{eq:sign_cond}, we have that $\bar{\mathfrak{p}}(x)v(x) \geq 0$ and thus that $\int_{\Omega} \bar{\mathfrak{p}}(x)v(x) \mathrm{d}x  = 0$. Consequently, $\bar{\mathfrak{p}}(x) \neq 0$ implies that $v(x) = 0$ for a.e.~$x \in \Omega$. We have thus proved that $v \in C_{\bar{z}}$.

We \EO{now proceed to derive \eqref{thm:instrumental_error_estiamate_va}. To accomplish this task, we write}
\begin{equation}
( j'(\bar{\mathsf{z}}_{\T}) - j'(\bar z) )(\bar{\mathsf{z}}_{\T} -\bar z )
=
j''(  \hat{\mathsf{z}}_{\T} ) (\bar{\mathsf{z}}_{\T} -\bar z )^2,
\qquad
\hat{\mathsf{z}}_{\T}:=
\bar z + \theta_{\T}(\bar{\mathsf{z}}_{\T} -\bar z ),
\label{eq:mean_value_theorem}
\end{equation}
where $\theta_{\T} \in (0,1)$. Let $u_{\hat{\mathsf{z}}_{\T}}$
% = S \hat{\mathsf{z}}_{\T}$, i.e., $u_{\hat{\mathsf{z}}_{\T}}$ 
solve \eqref{eq:weak_st_eq_integral} with $z$ replaced by $\hat{\mathsf{z}}_{\T}$, and let $p_{\hat{\mathsf{z}}_{\T}}$ be the solution to \eqref{eq:adj_eq_integral} with $u$ replaced by $u_{\hat{\mathsf{z}}_{\T}}$. Since $\bar{\mathsf{z}}_{\T} \rightarrow \bar z$ in $L^2(\Omega)$ as $h_{\T} \downarrow 0$ and \EO{$\{ \bar{\mathsf{z}}_{\T} \}$ is uniformly bounded in $L^{\infty}(\Omega)$}, we deduce that
$
u_{\hat{\mathsf{z}}_{\T}} \rightarrow \bar u
$
and
$
p_{\hat{\mathsf{z}}_{\T}} \rightarrow \bar p
$
in $\tilde H^s(\Omega) \cap L^{\infty}(\Omega)$ as $h_{\T} \downarrow 0$; \EO{see the proof of Theorem \ref{thm:suff_opt_cond}}. Similarly, we have that $\phi_{\T} := S'(\hat{\mathsf{z}}_{\T})v_{\T} \rightharpoonup S'(\bar z) v =: \phi$ in $\tilde H^s(\Omega)$ as $h_{\T} \downarrow 0$.
% because $v_{\T} \rightharpoonup v$ in $L^2(\Omega)$ as $h_{\T} \downarrow 0$. 
We thus obtain that
\begin{multline*}
\lim_{h_{\T} \rightarrow 0}
j''(  \hat{\mathsf{z}}_{\T} ) v_{\T}^2
 =
\lim_{h_{\T} \rightarrow 0} \int_{\Omega} \left[ \frac{\partial^2 L}{\partial u^2}(x,u_{\hat{\mathsf{z}}_{\T}})
\phi_{\T}^2
-
p_{\hat{\mathsf{z}}_{\T}} \frac{\partial^2 a}{\partial u^2}(x,u_{\hat{\mathsf{z}}_{\T}})
\phi_{\T}^2
+
\alpha v_{\T}^2
\right] 
\mathrm{d}x
\\
=
\alpha
+
\int_{\Omega} \left[ \frac{\partial^2 L}{\partial u^2}(x,\bar u)
\phi^2
-
\bar p \frac{\partial^2 a}{\partial u^2}(x,\bar u)
\phi^2
\right] 
\mathrm{d}x
=
\EO{\alpha + j''(\bar z)v^2 - \alpha \|v  \|_{L^2(\Omega)}^2}.
\end{multline*}
Since $\bar z$ satisfies \eqref{eq:second_order_equivalent}, we have
% $
% \lim_{h_{\T} \rightarrow 0}
% j''(  \hat{\mathsf{z}}_{\T} ) v_{\T}^2 \geq 
% \alpha + (\EO{\nu} - \alpha) \| v\|_{L^2(\Omega)}^2.
% $
% %Utilize that $\| v \|_{L^2(\Omega)} \leq 1$ to arrive at 
% Thus, 
$\EO{\lim_{h_{\T} \rightarrow 0} j''(  \hat{\mathsf{z}}_{\T} ) v_{\T}^2 \geq \min \{ \EO{\nu}, \alpha \}}$ upon utilizing that $\| v \|_{L^2(\Omega)} \leq 1$. This yields the existence of $h_{\ast}>0$ such that
$
j''(  \hat{\mathsf{z}}_{\T} ) v_{\T}^2 \geq 2^{-1} \min \{ \EO{\nu}, \alpha \}
$
for $h_{\T} \leq h_{\ast}$. In view of the definition of $v_{\T}$ and \eqref{eq:mean_value_theorem}, we thus arrive at  \eqref{eq:basic_estimate_va}.
\end{proof}

We are now ready to derive a bound for the error $\bar z- \bar{\mathsf{z}}_{\T}$ in $L^2(\Omega)$.

\begin{theorem}[error estimate]
\EO{Let the assumptions of Theorem \ref{thm:instrumental_error_estiamate_va} hold. 
Assume, in addition, that $\partial L/\partial u = \partial L/\partial u(x,u)$ is locally Lipschitz in $u$. Let $\bar{z} \in \mathbb{Z}_{ad}$ satisfy the second order conditions \eqref{eq:second_order_equivalent}. Then, there exists $h_{\star} >0$ such that}
\begin{equation}
\label{eq:error_estimate_in L2_va}
\| \bar z - \bar{\mathsf{z}}_{\T} \|_{L^2(\Omega)}\lesssim \EO{h_{\T}^{2\vartheta} |\log h_{\T}|^{2\upsilon}}
\qquad
\forall h_{\T} \leq h_{\star}
\qquad
\EO{\vartheta = \min \{ s, \tfrac{1}{2} \}}.
\end{equation}
Here, \EO{$\upsilon = \tfrac{1}{2}$ if $s > \tfrac{1}{2}$, $\upsilon = \tfrac{3}{2} + \zeta$ if $s = \tfrac{1}{2}$, and $\upsilon = \tfrac{1}{2} + \zeta$ if $s < \tfrac{1}{2}$; $\zeta$ is as in the statement of Proposition \ref{pro:state_regularity_Lipschitz_new}. The hidden constant is independent of $h_{\T}$.}
\label{thm:error_estimate_control_va}
\end{theorem}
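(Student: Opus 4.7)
The plan is to combine the instrumental bound \eqref{eq:basic_estimate_va} of Theorem \ref{thm:instrumental_error_estiamate_va} with the two variational inequalities \eqref{eq:var_ineq_integral} and \eqref{eq:variational_inequality_discrete_va}, and then reduce the resulting right-hand side to the discretization error of the adjoint for a fixed control, which will be controlled via Theorem \ref{thm:error_estimates_adjoint_equation_2}. A key simplification relative to the fully discrete analysis in Theorem \ref{thm:error_estimate_control} is that $\mathbb{Z}_{ad}$ is not discretized, so $\bar z$ itself is admissible in the discrete variational inequality and no projection step onto piecewise constants is required.

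Concretely, I would set $\mathsf{z} = \bar z$ in \eqref{eq:variational_inequality_discrete_va} to obtain $j_{\T}'(\bar{\mathsf{z}}_{\T})(\bar{\mathsf{z}}_{\T} - \bar z) \leq 0$, and $z = \bar{\mathsf{z}}_{\T}$ in \eqref{eq:var_ineq_integral} to obtain $j'(\bar z)(\bar{\mathsf{z}}_{\T} - \bar z) \geq 0$. Adding these to the right-hand side of \eqref{eq:basic_estimate_va} and using the explicit form of the derivatives of $j$ and $j_{\T}$ yields
\begin{equation*}
\mathfrak{C} \| \bar z - \bar{\mathsf{z}}_{\T} \|_{L^2(\Omega)}^2 \leq [j'(\bar{\mathsf{z}}_{\T}) - j_{\T}'(\bar{\mathsf{z}}_{\T})](\bar{\mathsf{z}}_{\T} - \bar z) = (p(\bar{\mathsf{z}}_{\T}) - \bar p_{\T}, \bar{\mathsf{z}}_{\T} - \bar z)_{L^2(\Omega)},
\end{equation*}
where $p(\bar{\mathsf{z}}_{\T})$ denotes the solution to \eqref{eq:adj_eq_integral} with $u$ replaced by $\mathcal{S}\bar{\mathsf{z}}_{\T}$. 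Applying Cauchy--Schwarz followed by Young's inequality to absorb a factor $\tfrac{\mathfrak{C}}{2} \| \bar z - \bar{\mathsf{z}}_{\T} \|_{L^2(\Omega)}^2$ into the left-hand side reduces the problem to establishing the bound $\| p(\bar{\mathsf{z}}_{\T}) - \bar p_{\T} \|_{L^2(\Omega)} \lesssim h_{\T}^{2\vartheta} |\log h_{\T}|^{2\upsilon}$. Squaring and inserting this bound delivers \eqref{eq:error_estimate_in L2_va}.

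The main obstacle will be justifying this last adjoint estimate in the present setting, since Theorem \ref{thm:error_estimates_adjoint_equation_2} was phrased for a \emph{piecewise constant} $z_{\T}$, whereas $\bar{\mathsf{z}}_{\T} \in \mathbb{Z}_{ad}$ is merely an $L^\infty$-function. My plan is to revisit that proof with $z = z_{\T} = \bar{\mathsf{z}}_{\T}$, noting that the contribution $\| z - z_{\T}\|_{L^2(\Omega)}$ in \eqref{eq:estimate_adjoint_equation_s_3} then vanishes, and to verify that the regularity estimate \eqref{eq:regularity_state_Lipschitz_new_p} for the auxiliary adjoint $q$ in \eqref{eq:q} and the $L^2$-bound for the auxiliary state $y$ defined by \eqref{eq:mathsf_u} depend only on the uniform $L^{\infty}(\Omega)$-boundedness of the right-hand side, together with assumptions \ref{A1}--\ref{A3}, \ref{B1}--\ref{B2}, \eqref{eq:assumptions_on_a_and_L_in_L2}, and \eqref{eq:u_h_bounded}. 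Since $\bar{\mathsf{z}}_{\T} \in \mathbb{Z}_{ad}$ is uniformly bounded and the discrete states $\bar u_{\T}$ satisfy \eqref{eq:u_h_bounded}, the argument carries over verbatim and the proof concludes without further difficulty.
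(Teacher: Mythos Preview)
Your proposal is correct and follows essentially the same route as the paper's proof: combine \eqref{eq:basic_estimate_va} with the two variational inequalities to arrive at $\mathfrak{C}\| \bar z - \bar{\mathsf{z}}_{\T} \|_{L^2(\Omega)}^2 \leq (p(\bar{\mathsf{z}}_{\T}) - \bar p_{\T}, \bar{\mathsf{z}}_{\T} - \bar z)_{L^2(\Omega)}$, and then invoke the adjoint estimates of Theorems \ref{thm:error_estimates_adjoint_equation} and \ref{thm:error_estimates_adjoint_equation_2}. Your additional care in checking that the proof of Theorem \ref{thm:error_estimates_adjoint_equation_2} carries over when $z_{\T}$ is replaced by $\bar{\mathsf{z}}_{\T}\in\mathbb{Z}_{ad}$ (relying only on uniform $L^{\infty}$-bounds) is warranted and makes explicit what the paper leaves implicit under the phrase ``similar arguments''.
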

\begin{proof}
We invoke the instrumental estimate \eqref{eq:basic_estimate_va}, the continuous variational inequality \eqref{eq:var_ineq_integral}, and the semidiscrete one \eqref{eq:variational_inequality_discrete_va} to arrive at
\[
\mathfrak{C}\| \bar z - \bar{\mathsf{z}}_{\T} \|^2_{L^2(\Omega)} \leq \left[ j'(\bar{\mathsf{z}}_{\T}) - j_{\T}'(\bar{\mathsf{z}}_{\T}) \right](\bar{\mathsf{z}}_{\T} - \bar z).
\]
We now observe that $(j'(\bar{\mathsf{z}}_{\T}) - j_{\T}'(\bar{\mathsf{z}}_{\T}) )(\bar{\mathsf{z}}_{\T} - \bar z) = (p(\bar{\mathsf{z}}_{\T}) - \bar p_{\T}, \bar{\mathsf{z}}_{\T} - \bar z)_{L^2(\Omega)}$. Here, $\bar p_{\T}$ solves \eqref{eq:optimal_adjoint_state_discrete} and $p(\bar{\mathsf{z}}_{\T})$ denotes the solution to \eqref{eq:adj_eq_integral} with $u$ being the solution to \eqref{eq:weak_st_eq_integral} with $z$ replaced by $\bar{\mathsf{z}}_{\T}$. Similar arguments to the ones elaborated within the proofs of Theorems \ref{thm:error_estimates_adjoint_equation} and \ref{thm:error_estimates_adjoint_equation_2} can be utilized to obtain $\| p(\bar{\mathsf{z}}_{\T}) - \bar p_{\T}\|_{L^2(\Omega)} \lesssim \EO{h^{2\vartheta}_{\T} |\log h_{\T}|^{2\upsilon}}$. This bound implies the desired error estimate \eqref{eq:error_estimate_in L2_va} and concludes the proof.
\end{proof}

\subsubsection{Error estimates on graded meshes}

In this section, we operate under the family of graded meshes $\{ \T \}$ of $\bar \Omega$ dictated by \eqref{eq:graded_meshes_state_equation} and obtain a bound for $\bar z- \bar{\mathsf{z}}_{\T}$ in $L^2(\Omega)$, \EO{which improves the one derived for the fully discrete scheme in Theorem \ref{thm:error_estimate_control} and the one obtained for the semidiscrete scheme in Theorem \ref{thm:error_estimate_control_va}.}

\begin{theorem}[\EO{error bound on graded meshes}]
Let the assumptions of Theorem \ref{thm:error_estimate_control_va} hold. Assume that $\Omega$ satisfies, in addition, an exterior ball condition. Let $\mu = n/(n-1)$ be the parameter that dictates \eqref{eq:graded_meshes_state_equation}, and let $\beta_{\star} = n/(2(n-1)) - s$. Assume that $a$, $\partial a/\partial u$, and $\partial L/\partial u$ satisfy \eqref{eq:Nemitskii} with $\varrho = \beta_{\star}$. If $a(\cdot,0) \in L^{\infty}(\Omega)$ and $\bar z_{\T}, \bar u_{\T} \in C^{\beta_{\star}}(\bar \Omega)$, uniformly with respect to discretization, then there exists $h_{\nabla} >0$ such that
\begin{equation}
\label{eq:error_estimate_in L2_va_graded}
\| \bar z - \bar{\mathsf{z}}_{\T} \|_{L^2(\Omega)}\lesssim h^{\tfrac{n}{2(n-1)} + \vartheta} |\log h|^{\upsilon},
\qquad
\vartheta = \min \{s,\tfrac{1}{2} \},
\qquad
\EO{\left[\frac{n}{4(n-1)},1\right)}.
\end{equation}
% with a hidden constant that is independent of $h$. Here, 
%$\vartheta = \min \{s,\tfrac{1}{2}\}$, 
for all $h \leq h_{\nabla}$. Here, \EO{$\upsilon = \tfrac{3}{2}$ if $s > \tfrac{1}{2}$, $\upsilon = \tfrac{5}{2} + \zeta$ if $s = \tfrac{1}{2}$, and $\upsilon = \tfrac{3}{2} + \zeta$ if $s < \tfrac{1}{2}$; $\zeta$ is as in the statement of Proposition \ref{pro:state_regularity_Lipschitz_new}. The hidden constant is independent of $h$.}
\label{thm:error_estimate_control_va_graded}
\end{theorem}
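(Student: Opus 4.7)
The plan is to mirror the strategy of Theorem \ref{thm:error_estimate_control_va}, replacing every quasi-uniform $L^2$-estimate for the adjoint equation by its graded-mesh analogue provided by Theorem \ref{thm:error_estimates_state_equation_graded}. I would first proceed by contradiction, assuming that \eqref{eq:error_estimate_in L2_va_graded} fails, so that the instrumental estimate \eqref{eq:basic_estimate_va} of Theorem \ref{thm:instrumental_error_estiamate_va} is at our disposal. Combining \eqref{eq:basic_estimate_va} with the continuous variational inequality \eqref{eq:var_ineq_integral} tested at $z = \bar{\mathsf{z}}_{\T}$ and the semidiscrete inequality \eqref{eq:variational_inequality_discrete_va} tested at $\mathsf{z} = \bar{z}$, exactly as in the proof of Theorem \ref{thm:error_estimate_control_va}, reduces the argument to
\[
\mathfrak{C}\,\|\bar{z} - \bar{\mathsf{z}}_{\T}\|_{L^2(\Omega)}^{2} \leq \bigl(p(\bar{\mathsf{z}}_{\T}) - \bar{p}_{\T},\, \bar{\mathsf{z}}_{\T} - \bar{z}\bigr)_{L^2(\Omega)},
\]
where $p(\bar{\mathsf{z}}_{\T})$ is the continuous adjoint associated with $\mathcal{S}\bar{\mathsf{z}}_{\T}$ and $\bar{p}_{\T}$ the semidiscrete adjoint associated with $\mathcal{S}_{\T}\bar{\mathsf{z}}_{\T}$. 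A Cauchy--Schwarz step then reduces matters to establishing the graded-mesh $L^2$-bound $\|p(\bar{\mathsf{z}}_{\T}) - \bar{p}_{\T}\|_{L^2(\Omega)} \lesssim h^{\tfrac{n}{2(n-1)}+\vartheta}|\log h|^{\upsilon}$.

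To produce this $L^2$-bound I would replay the comparison argument of Theorem \ref{thm:error_estimates_adjoint_equation_2}, splitting through the auxiliary problem \eqref{eq:q} and its discretization, but feeding it the graded-mesh estimates \eqref{eq:graded_estimate_s}--\eqref{eq:graded_estimate_2} of Theorem \ref{thm:error_estimates_state_equation_graded} in place of the quasi-uniform ones used there. This requires verifying that the adjoint problem solved by $p(\bar{\mathsf{z}}_{\T})$ meets the hypotheses of Theorem \ref{thm:error_estimates_state_equation_graded}: both the reaction coefficient $\partial a/\partial u(\cdot,\mathcal{S}\bar{\mathsf{z}}_{\T})$ and the source $\partial L/\partial u(\cdot,\mathcal{S}\bar{\mathsf{z}}_{\T})$ must belong to $C^{\beta_{\star}}(\bar{\Omega})$ with norms uniformly bounded in $\T$. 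Granted this, Theorem \ref{thm:error_estimates_state_equation_graded} produces the required graded-mesh $L^2$-rate for the linearized adjoint, and combining with the Cauchy--Schwarz step from the previous paragraph contradicts the failure of \eqref{eq:error_estimate_in L2_va_graded}.

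The main obstacle is thus the uniform $C^{\beta_{\star}}$-control of the ingredients of the adjoint equation. The hypothesis $\bar{u}_{\T} \in C^{\beta_{\star}}(\bar{\Omega})$ uniformly in $\T$ is given; combined with the convergence property \eqref{eq:convergence property} and the $L^{\infty}$-stability from Theorem \ref{thm:stata_equation_well_posedness_integral}, this transfers a uniform $C^{\beta_{\star}}$ bound to $\mathcal{S}\bar{\mathsf{z}}_{\T}$. The Nemitskii condition \eqref{eq:Nemitskii} with $\varrho = \beta_{\star}$ imposed on $\partial a/\partial u$ and $\partial L/\partial u$, invoked through Remark \ref{rk:Nemitskii}, then yields the required uniform H\"older bounds on both the coefficient and the source. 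Finally, since $s \geq n/(4(n-1))$ is equivalent to $s \geq \beta_{\star}$, Theorem \ref{thm:regularity_space_state_equation_integral_Holder} applied to the adjoint yields $p(\bar{\mathsf{z}}_{\T}) \in C^{s}(\mathbb{R}^{n}) \hookrightarrow C^{\beta_{\star}}(\bar{\Omega})$ uniformly in $\T$, which closes the verification. The delicate point I expect to wrestle with is the quantitative uniform-in-$\T$ character of these H\"older constants, since the contradiction argument at the end requires the hidden constant in the graded-mesh rate not to blow up along the sequence of discretizations.
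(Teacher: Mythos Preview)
Your overall strategy is the same as the paper's: reduce via the instrumental estimate \eqref{eq:basic_estimate_va} to bounding $\|p(\bar{\mathsf{z}}_{\T})-\bar p_{\T}\|_{L^2(\Omega)}$, split through an auxiliary continuous adjoint, and feed graded-mesh $L^2$-rates into both pieces. Two points, however, need correction.

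First, the contradiction framing is unnecessary here. Unlike the fully discrete case (Theorem \ref{thm:instrumental_error_estiamate}), the semidiscrete instrumental bound of Theorem \ref{thm:instrumental_error_estiamate_va} holds outright for all $h_{\T}\le h_\star$; the paper simply invokes it directly.

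Second, and more substantively, your H\"older verification targets the wrong problem and uses an invalid ``transfer'' argument. The discrete adjoint $\bar p_{\T}$ is the Galerkin approximation of the auxiliary problem $r$ in \eqref{eq:r}, whose coefficients and source are built from $\bar u_{\T}$, \emph{not} from $\mathcal{S}\bar{\mathsf{z}}_{\T}$. Hence the graded-mesh estimate for $\|r-\bar p_{\T}\|_{L^2(\Omega)}$ requires $\partial L/\partial u(\cdot,\bar u_{\T})-\partial a/\partial u(\cdot,\bar u_{\T})\,r\in C^{\beta_\star}(\bar\Omega)$, and this is exactly where the \emph{hypothesis} $\bar u_{\T}\in C^{\beta_\star}(\bar\Omega)$ enters (via Remark \ref{rk:Nemitskii}), together with $r\in C^s(\mathbb{R}^n)$ from Proposition \ref{pro:state_regularity_Holder}. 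Your claim that $\bar u_{\T}\in C^{\beta_\star}$ ``transfers'' to $\mathcal{S}\bar{\mathsf{z}}_{\T}\in C^{\beta_\star}$ through \eqref{eq:convergence property} and $L^\infty$-stability is simply false: $L^q$-convergence and uniform $L^\infty$-bounds carry no H\"older information. What is true, and what the paper uses, is that $\mathcal{S}\bar{\mathsf{z}}_{\T}\in C^s(\mathbb{R}^n)\hookrightarrow C^{\beta_\star}(\bar\Omega)$ follows \emph{directly} from Proposition \ref{pro:state_regularity_Holder} because $\bar{\mathsf{z}}_{\T}\in\mathbb{Z}_{ad}\subset L^\infty(\Omega)$ and $a(\cdot,0)\in L^\infty(\Omega)$. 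This regularity is then used, together with the hypothesis $\bar{\mathsf{z}}_{\T}\in C^{\beta_\star}(\bar\Omega)$ (which you never invoke) and the Nemitskii condition on $a$, to put the state source $\bar{\mathsf{z}}_{\T}-a(\cdot,\mathcal{S}\bar{\mathsf{z}}_{\T})$ in $C^{\beta_\star}(\bar\Omega)$ and apply \eqref{eq:graded_estimate_2} to $\|\mathcal{S}\bar{\mathsf{z}}_{\T}-\bar u_{\T}\|_{L^2(\Omega)}$, which controls $\|p(\bar{\mathsf{z}}_{\T})-r\|_s$.
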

\begin{proof}
\EO{We begin the proof by by considering $h$ sufficiently small such that $h_{\T} \leq h_{\star}$, where $h_{\star}$ is as in \eqref{eq:basic_estimate_va}. Consequently, we have at hand the estimate in \eqref{eq:basic_estimate_va}. We now follow the proof of Theorem \ref{thm:error_estimate_control_va} and observe that it suffices to bound $ p(\bar{\mathsf{z}}_{\T}) - \bar p_{\T}$ in $L^2(\Omega)$. To accomplish this task, we define $r \in \tilde H^s(\Omega)$ as the solution to}
\begin{equation}\label{eq:r}
\mathcal{A}(v,r) + \left(\frac{\partial a}{\partial u}(\cdot,\bar u_{\T})r,v\right)_{L^2(\Omega)} = \left(\frac{\partial L}{\partial u}(\cdot,\bar u_{\T}),v\right)_{L^2(\Omega)}
\quad \forall v \in \tilde H^s(\Omega).
\end{equation}
\EO{We recall that $p(\bar z_{\T})$ denotes the solution to \eqref{eq:adj_eq_integral} with $u$ replaced by $\mathcal{S} \bar z_{\T}$ and write the problem that $p(\bar{\mathsf{z}}_{\T}) - r$ solves. Observe that \ref{A3}, \eqref{eq:u_h_bounded}, and \eqref{eq:assumptions_on_a_and_L_in_L2} yield
\[
\mathfrak{L}:=
\frac{\partial a}{\partial u} (\cdot, \bar u_{\T} ) -\frac{\partial a}{\partial u} (\cdot,  S \bar z_{\T})\in L^2(\Omega),
\quad
\mathfrak{N}:=
 \frac{\partial L}{\partial u} (\cdot, S \bar z_{\T} ) -\frac{\partial L}{\partial u} (\cdot, \bar u_{\T}) \in L^2(\Omega),
\]
We thus invoke a basic stability bound for the aforementioned problem on the basis of assumption \ref{A2} and the fact that $\mathfrak{L}$ and $\mathfrak{N}$ belong to $L^2(\Omega)$, uniformly with respect to discretization, and utilize the fact that $\partial a/\partial u = \partial a/\partial u(x,u)$ and $\partial L/\partial u = \partial L/\partial u(x,u)$ are locally Lipschitz in $u$ to obtain
 \begin{equation*}
\| p(\bar{\mathsf{z}}_{\T}) - r\|_{s} 
\lesssim 
\| \mathfrak{N} \|_{L^2(\Omega)}
+
\| \mathfrak{L} \|_{L^2(\Omega)}
\lesssim 
\| S \bar z_{\T}  - \bar u_{\T} \|_{L^2(\Omega)}.
\end{equation*}
We now derive a bound for $\| S \bar z_{\T}  - \bar u_{\T} \|_{L^2(\Omega)}$.}
Since $\{ \bar z_{\T} \} \subset \mathbb{Z}_{ad}$, $a(\cdot,0) \in L^{\infty}(\Omega)$, and $\Omega$ satisfies an exterior ball condition, Proposition \ref{pro:state_regularity_Holder} reveals that $S \bar z_{\T} \in C^s(\bar \Omega)$. Utilize now that $a$ satisfies \eqref{eq:Nemitskii} with $\varrho = \beta_{\star}$ and that $\bar z_{\T} $ belongs to $C^{\beta_{\star}}(\bar \Omega)$, uniformly with respect to discretization, to obtain $\bar z_{\T} - a(\cdot,S \bar z_{\T}) \in C^{\beta_{\star}}(\bar \Omega)$, upon using the results of Remark \ref{rk:Nemitskii} and the inequality $\beta_{\star} = n/(2(n-1)) -s \leq s$. Apply \eqref{eq:graded_estimate_2} to arrive at
$
\| S \bar z_{\T}  - \bar u_{\T} \|_{L^2(\Omega)} \lesssim h^{\EO{\mathfrak{0}}} | \log h|^{\upsilon},
$
where \EO{$\mathfrak{o} = n/2(n-1) + \vartheta$}. To bound $r - \bar p_{\T}$, we observe that $\bar p_{\T}$ is the finite element approximation of $r$ within $\mathbb{V}(\T)$ and use \cite[Proposition 3.10]{MR4283703}:
$
\| r - \bar p_{\T} \|_{L^2(\Omega)} \lesssim h^{\EO{\mathfrak{o}}} | \log h|^{\upsilon}.
$
Notice that, $\partial a/\partial u$ and $\partial L/\partial u$ satisfies \eqref{eq:Nemitskii} with $\varrho = \beta_{\star}$ and $\bar u_{\T} \in C^{\beta_{\star}}(\bar \Omega)$. Thus, $\partial L/\partial u (\cdot,\bar u_{\T}) - \partial a/\partial u(\cdot,\bar u_{\T})r \in C^{\beta_{\star}}(\bar \Omega)$ uniformly with respect to discretization. Observe that $r \in C^{s}(\bar \Omega)$ and $s \geq \beta_{\star}$.
\end{proof}
\\
\\
\textbf{Declarations}
\\
\textbf{Competing Interests:} The author has not disclosed any competing interests.

\bibliographystyle{plain}
\bibliography{bibliography}

\begin{thebibliography}{10}

\bibitem{MR2424078}
R.~A. Adams and J.~F. Fournier.
\newblock {\em Sobolev spaces}, volume 140 of {\em Pure and Applied Mathematics
  (Amsterdam)}.
\newblock Elsevier/Academic Press, Amsterdam, second edition, 2003.

\bibitem{MR3988258}
H.~Antil, R.~Khatri, and M~Warma.
\newblock External optimal control of nonlocal {PDE}s.
\newblock {\em Inverse Problems}, 35(8):084003, 35, 2019.

\bibitem{MR3429730}
H.~Antil and E.~Ot\'{a}rola.
\newblock A {FEM} for an optimal control problem of fractional powers of
  elliptic operators.
\newblock {\em SIAM J. Control Optim.}, 53(6):3432--3456, 2015.

\bibitem{MR4119494}
H.~Antil, D.~Verma, and M.~Warma.
\newblock Optimal control of fractional elliptic {PDE}s with state constraints
  and characterization of the dual of fractional-order {S}obolev spaces.
\newblock {\em J. Optim. Theory Appl.}, 186(1):1--23, 2020.

\bibitem{semilinear}
H.~Antil and M.~Warma.
\newblock Optimal control of fractional semilinear {PDE}s.
\newblock {\em ESAIM Control Optim. Calc. Var.}, 26:Paper No. 5, 30, 2020.

\bibitem{bilinear_otarola}
F.~Bersetche, F.~Fuica, E.~Ot\'arola, and D.~Quero.
\newblock Bilinear optimal control for the fractional laplacian: error
  estimates on lipschitz domains.
\newblock {\em arXiv:1605.03927}, 2023.

\bibitem{MR3893441}
A.~Bonito, J.~P. Borthagaray, R.~H. Nochetto, E.~Ot\'{a}rola, and A.~J.
  Salgado.
\newblock Numerical methods for fractional diffusion.
\newblock {\em Comput. Vis. Sci.}, 19(5-6):19--46, 2018.

\bibitem{MR4283703}
J.~P. Borthagaray, D.~Leykekhman, and R.~H. Nochetto.
\newblock Local energy estimates for the fractional {L}aplacian.
\newblock {\em SIAM J. Numer. Anal.}, 59(4):1918--1947, 2021.

\bibitem{MR4530901}
J.~P. Borthagaray and R.~H. Nochetto.
\newblock Besov regularity for the {D}irichlet integral fractional {L}aplacian
  in {L}ipschitz domains.
\newblock {\em J. Funct. Anal.}, 284(6):Paper No. 109829, 33, 2023.

\bibitem{MR4441219}
O.~Burkovska, C.~Glusa, and M.~D'Elia.
\newblock An optimization-based approach to parameter learning for fractional
  type nonlocal models.
\newblock {\em Comput. Math. Appl.}, 116:229--244, 2022.

\bibitem{MR1325579}
R.~Chiappinelli and R.~Nugari.
\newblock The {N}emitski\u{\i} operator in {H}\"{o}lder spaces: some necessary
  and sufficient conditions.
\newblock {\em J. London Math. Soc. (2)}, 51(2):365--372, 1995.

\bibitem{MR4189291}
M.~D'Elia, Q.~Du, C.~Glusa, M.~Gunzburger, X.~Tian, and Z.~Zhou.
\newblock Numerical methods for nonlocal and fractional models.
\newblock {\em Acta Numer.}, 29:1--124, 2020.

\bibitem{MR3990191}
M.~D'Elia, C.~Glusa, and E.~Ot\'{a}rola.
\newblock A priori error estimates for the optimal control of the integral
  fractional {L}aplacian.
\newblock {\em SIAM J. Control Optim.}, 57(4):2775--2798, 2019.

\bibitem{MR3472639}
M.~D'Elia and M.~Gunzburger.
\newblock Identification of the diffusion parameter in nonlocal steady
  diffusion problems.
\newblock {\em Appl. Math. Optim.}, 73(2):227--249, 2016.

\bibitem{MR4015150}
S.~Dohr, C.~Kahle, S.~Rogovs, and P.~Swierczynski.
\newblock A {FEM} for an optimal control problem subject to the fractional
  {L}aplace equation.
\newblock {\em Calcolo}, 56(4):Paper No. 37, 21, 2019.

\bibitem{MR1752263}
B.~Faermann.
\newblock Localization of the {A}ronszajn-{S}lobodeckij norm and application to
  adaptive boundary element methods. {I}. {T}he two-dimensional case.
\newblock {\em IMA J. Numer. Anal.}, 20(2):203--234, 2000.

\bibitem{MR1930387}
B.~Faermann.
\newblock Localization of the {A}ronszajn-{S}lobodeckij norm and application to
  adaptive boundary element methods. {II}. {T}he three-dimensional case.
\newblock {\em Numer. Math.}, 92(3):467--499, 2002.

\bibitem{MR137148}
R.~K. Getoor.
\newblock First passage times for symmetric stable processes in space.
\newblock {\em Trans. Amer. Math. Soc.}, 101:75--90, 1961.

\bibitem{corner_singularities}
H.~Gimperlein, E.~Stephan, and J.~Stocek.
\newblock Corner singularities for the fractional {L}aplacian and finite
  element approximation.
\newblock {\em preprint}, 2019.

\bibitem{glusaotarola}
C.~Glusa and E.~Ot\'{a}rola.
\newblock Error estimates for the optimal control of a parabolic fractional
  {PDE}.
\newblock {\em SIAM J. Numer. Anal.}, 59(2):1140--1165, 2021.

\bibitem{MR3503820}
G.~Grubb.
\newblock Regularity of spectral fractional {D}irichlet and {N}eumann problems.
\newblock {\em Math. Nachr.}, 289(7):831--844, 2016.

\bibitem{Hinze:05}
M.~Hinze.
\newblock A variational discretization concept in control constrained
  optimization: the linear-quadratic case.
\newblock {\em Comput. Optim. Appl.}, 30(1):45--61, 2005.

\bibitem{MR4369059}
G.~Holler and K.~Kunisch.
\newblock Learning nonlocal regularization operators.
\newblock {\em Math. Control Relat. Fields}, 12(1):81--114, 2022.

\bibitem{MR1786735}
D.~Kinderlehrer and G.~Stampacchia.
\newblock {\em An introduction to variational inequalities and their
  applications}, volume~31 of {\em Classics in Applied Mathematics}.
\newblock Society for Industrial and Applied Mathematics (SIAM), Philadelphia,
  PA, 2000.

\bibitem{MR3613319}
M.~Kwa\'{s}nicki.
\newblock Ten equivalent definitions of the fractional {L}aplace operator.
\newblock {\em Fract. Calc. Appl. Anal.}, 20(1):7--51, 2017.

\bibitem{Landkof}
N.~S. Landkof.
\newblock {\em Foundations of modern potential theory}.
\newblock Springer-Verlag, 1972.

\bibitem{McLean}
W.~McLean.
\newblock {\em Strongly elliptic systems and boundary integral equations}.
\newblock Cambridge University Press, Cambridge, 2000.

\bibitem{MR1173747}
P.~Oswald.
\newblock On the boundedness of the mapping {$f\to|f|$} in {B}esov spaces.
\newblock {\em Comment. Math. Univ. Carolin.}, 33(1):57--66, 1992.

\bibitem{MR3702421}
E.~Ot\'{a}rola.
\newblock A piecewise linear {FEM} for an optimal control problem of fractional
  operators: error analysis on curved domains.
\newblock {\em ESAIM Math. Model. Numer. Anal.}, 51(4):1473--1500, 2017.

\bibitem{MR4066856}
E.~Ot\'{a}rola.
\newblock An adaptive finite element method for the sparse optimal control of
  fractional diffusion.
\newblock {\em Numer. Methods Partial Differential Equations}, 36(2):302--328,
  2020.

\bibitem{MR4358465}
E.~Ot\'{a}rola.
\newblock Fractional semilinear optimal control: optimality conditions,
  convergence, and error analysis.
\newblock {\em SIAM J. Numer. Anal.}, 60(1):1--27, 2022.

\bibitem{MR3939497}
E.~Ot\'{a}rola and T.~N.~T. Quyen.
\newblock A reaction coefficient identification problem for fractional
  diffusion.
\newblock {\em Inverse Problems}, 35(4):045010, 33, 2019.

\bibitem{MR3739306}
E~Ot\'{a}rola and A.~J. Salgado.
\newblock Sparse optimal control for fractional diffusion.
\newblock {\em Comput. Methods Appl. Math.}, 18(1):95--110, 2018.

\bibitem{MR3168912}
X.~Ros-Oton and J.~Serra.
\newblock The {D}irichlet problem for the fractional {L}aplacian: regularity up
  to the boundary.
\newblock {\em J. Math. Pures Appl. (9)}, 101(3):275--302, 2014.

\bibitem{Tartar}
L.~Tartar.
\newblock {\em An introduction to {S}obolev spaces and interpolation spaces},
  volume~3 of {\em Lecture Notes of the Unione Matematica Italiana}.
\newblock Springer, Berlin; UMI, Bologna, 2007.

\bibitem{MR2583281}
F.~Tr{\"o}ltzsch.
\newblock {\em Optimal control of partial differential equations}, volume 112
  of {\em Graduate Studies in Mathematics}.
\newblock American Mathematical Society, Providence, RI, 2010.

\end{thebibliography}

\end{document}